\newtheorem{prop}{Proposition}[section]
\newtheorem{lema}[prop]{Lemma}
\newtheorem{rem}[prop]{Remark}
\newtheorem{defi}[prop]{Definition}
\newtheorem{teo}[prop]{Theorem}
\newtheorem{eje}[prop]{Example}
\renewcommand{\dim}{\text{dim}_{\rm{H}}}
\def\R{{\mathbb R}}
\def\N{{\mathbb N}}
\def\M{{\mathcal M}}
\def\C{{\mathcal C}}
\DeclareMathSymbol{\varnothing}{\mathord}{AMSb}{"3F}
\renewcommand{\emptyset}{\varnothing}
\title[Multifractal analysis for quotients of Birkhoff sums]{Multifractal analysis for quotients of Birkhoff sums for countable Markov maps}
\date{\today}
\l\text{ }Rams for helpful comments. Finally, we wish to thank the referee for all the interesting remarks and comments.}
\author{Godofredo Iommi} \address{Facultad de Matem\'aticas,
Pontificia Universidad Cat\'olica de Chile (PUC), Avenida Vicu\~na Mackenna 4860, Santiago, Chile}
\email{giommi@mat.puc.cl}
\urladdr{http://www.mat.puc.cl/\textasciitilde giommi/}
\author{Thomas Jordan} \address{The School of Mathematics, The University of Bristol, University Walk, Clifton, Bristol, BS8 1TW, UK}
\email{Thomas.Jordan@bristol.ac.uk}
\urladdr{http://www.maths.bris.ac.uk/~matmj}
\begin{document}

\begin{abstract}
This paper is devoted to study multifractal analysis of quotients of Birkhoff averages for countable Markov maps. We prove a variational principle for the Hausdorff dimension of the  level sets. Under certain assumptions we are able to show that the spectrum varies analytically in parts of its domain. We apply our results to show that the Birkhoff spectrum for the Manneville-Pomeau map can be discontinuous, showing the remarkable differences with the uniformly hyperbolic setting. We also obtain results describing the Birkhoff spectrum of suspension flows. Examples involving continued fractions are also given.
\end{abstract}

\maketitle

\section{Introduction}
Multifractal analysis is a branch of the dimension theory of dynamical systems. It typically involves decomposing the phase space into level sets where some local quantity takes a fixed value, say $a$. The standard problems are to find the Hausdorff dimension of these sets and to determine how the dimension varies with the parameter $a$. In hyperbolic dynamical systems there are several local quantities which can be studied in such a way. These quantities are dynamically defined, examples are local dimensions of Gibbs measures, Birkhoff averages of continuous functions and local entropies of Gibbs measures. In this setting we normally find two types of results, on the one hand variational principles are obtained to determine the dimension of the level sets and on the other thermodynamic formalism is used to prove that the spectra varies in an analytic way.  See \cite{ba} for an overview of some of these results.

In this note we consider the multifractal analysis for quotients of Birkhoff averages for a countable branch Markov map. In the case of expanding finite branch Markov maps on the interval the multifractal analysis for Birkhoff averages or quotients of Birkhoff averages of continuous functions is well understood, for example see \cite{bs1, flw, olsen, flp, c}. In particular, if the Markov map is expanding, $C^{1+\epsilon}$ and the continuous functions are H\"{o}lder the multifractal spectra vary analytically \cite{bs1}. However, it turns out that substantial differences can occur in the case where there are countably many branches and Birkhoff averages are studied, note that the space is no longer compact. In this setting, phase transitions may occur, for example see the work in \cite{fjlr, ij,KMS}. Nevertheless,  the dimension of sets in the multifractal decomposition can still generally be found by a conditional variational principle. Our aim is to extend these results to fairly general quotients of Birkhoff sums and with additional assumptions examine the smoothness of the multifractal spectra. Note that the multifractal analysis of quotients of Birkhoff sums has already been studied in \cite{ku} but with the assumption that the denominator is the Lyapunov exponent.

One motivation for our work is that studying quotients of Birkhoff averages for a countable branch Markov map can relate to studying Birkhoff averages for a finite branch non-uniformly expanding map. In \cite{jjop} multifractal analysis for Birkhoff on finite branch non-uniformly expanding maps was studied, while a conditional variational principle was obtained the question of how the spectra varied was not fully addressed. By tackling this problem via countable state expanding maps we are able to obtain new results for H\"{o}lder functions in this setting and to show that while in some regions the spectrum varies analytically it is also possible for it to have discontinuities. Another motivation is to study related problems for suspension flows where Birkhoff averages on the flow correspond to quotients of Birkhoff averages for the base map.

Let us be more precise and define the dynamical systems under consideration. Denote by $I=[0,1]$  the unit interval. As in \cite{ij} we consider the  class of EMR (expanding-Markov-Renyi) interval maps.
\begin{defi} \label{maps} Let $\{ I_i \}_{i\in\N}$ be a countable collection of closed intervals where where $\textrm{int}(I_i)\cap \textrm{int}(I_j)=\emptyset$ for $i,j\in\N$ with $i\neq j$ and $I_i \subset I$ for every $i \in \mathbb{N}$.
A map $T:\cup_{n=1}^{\infty}I_n \to I$ is an EMR map, if the following properties are satisfied
\begin{enumerate}
\item The intervals in the partition are ordered in the sense that for every $i \in \N$ we have $\sup \{x : x \in I_{i}\} \leq \inf \{x : x \in I_{i+1}\}$. Moreover,  zero is the unique  accumulation point  of the set of endpoints of $\{I_i\}$.
\item The map is $C^2$ on $\cup_{i=1}^{\infty} \textrm{int }I_i$.
\item There exists $\xi >1$ and $N\in\N$ such that for every $x \in \cup_{i=1}^{\infty} I_i$ and $n\geq N$
we have $|(T^n)'(x)|>\xi^n$.
\item The map $T$ is Markov and it can be coded by a full-shift on a countable alphabet.
\item The map satisfies the Renyi condition, that is, there exists a positive number $K>0$
such that
\[ \sup_{n \in \N} \sup_{x,y,z \in I_n} \frac{|T''(x)|}{|T'(y)| |T'(z)|} \leq K. \]
\end{enumerate}
\end{defi}

The \emph{repeller} of such a map is defined by
\[\Lambda:=\left\{x \in \cup_{i=1}^{\infty} I_i: T^n(x) \textrm{ is well defined for every } n \in \mathbb{N} \right\}.\]
The Markov structure assumed for EMR maps $T$, allows for a good symbolic representation (see Section \ref{st}).
\begin{eje} \label{ga}
The Gauss map $G:(0,1] \to (0,1]$ defined by
\[G(x)= \frac{1}{x} -\Big[ \frac{1}{x} \Big],\]
where $[ \cdot]$ is the integer part, is a standard example of an EMR map.
\end{eje}

%

For $\phi\in\mathcal{R}$ and $\psi\in\mathcal{R}_{\eta}$ (see Definition \ref{pot} for a precise definition of the class of potentials $\mathcal{R}$ and $\mathcal{R}_{\eta}$ )  we will denote
\begin{eqnarray*}
\alpha_m=\inf \left\{ \lim_{n \to \infty} \frac{\sum_{i=0}^{n-1} \phi (T^i x)}{\sum_{i=0}^{n-1} \psi (T^i x)}: x \in  \Lambda \right\} \textrm{ and } &\\
\alpha_M=\sup \left\{ \lim_{n \to \infty} \frac{\sum_{i=0}^{n-1} \phi (T^i x)}{\sum_{i=0}^{n-1} \psi (T^i x)}: x \in  \Lambda \right\}.
\end{eqnarray*}
Throughout the paper we will assume that $\alpha_m\neq\alpha_M$ since otherwise our results become trivial.
Note that, since the space $\Lambda$ is not compact,  it is possible for $\alpha_M$ to be  infinity.
For $\alpha \in [\alpha_m,\alpha_M]$ we define the level set  of points having Birkhoff ratio equal to $\alpha$ by
\begin{equation*}
J(\alpha)= \left\{x \in  \Lambda :    \lim_{n \to \infty} \frac{\sum_{i=0}^{n-1} \phi (T^i x)}{\sum_{i=0}^{n-1} \psi (T^i x)}
 = \alpha \right\}.
\end{equation*}
Note that these sets induce the so called \emph{multifractal decomposition} of the repeller,
\begin{equation*}
\Lambda= \bigcup_{\alpha=\alpha_m}^{\alpha_M}J(\alpha) \text{ } \bigcup J',
\end{equation*}
where $J'$ is the \emph{irregular set} defined by,
\[J' = \left\{x \in  \Lambda :  \textrm{ the limit }
 \lim_{n \to \infty} \frac{\sum_{i=0}^{n-1} \phi (T^i x)}{\sum_{i=0}^{n-1} \psi (T^i x)}  \textrm { does not exist }  \right\}. \]
The \emph{multifractal spectrum} is the function that encodes this decomposition and it is defined by
\[b(\alpha)= \dim(J(\alpha)),\]
where $\dim(\cdot)$ denotes the Hausdorff dimension (see Subsection \ref{hausdorff}).
The form of our results depend upon the value of $\alpha$. We will split the open interval $(\alpha_m,\alpha_M)$ into two subsets. Firstly  we define
$$\underline{\alpha}=\inf\left\{\alpha \in \R:\text{ there exists }\{x_n\}_{n\in\N}\text{ where }\lim_{n\to\infty}x_n=0\text{ and } \lim_{n\to\infty}\frac{\phi(x_n)}{\psi(x_n)}=\alpha\right\}$$
and
$$\overline{\alpha}=\sup\left\{\alpha\in \R:\text{ there exists }\{x_n\}_{n\in\N}\text{ where }\lim_{n\to\infty}x_n=0\text{ and } \lim_{n\to\infty}\frac{\phi(x_n)}{\psi(x_n)}=\alpha\right\}.$$
We let
$E=[\underline{\alpha},\overline{\alpha}]$, $U=(\alpha_m,\alpha_M)\backslash E$ and $\tilde{\mathcal{M}}(T)$ be the space of all $T$-invariant probability measures for which $\psi$ and $\log |T'|$ are integrable.
We can now state our first result. In our first theorem we establish a variational principle for the dimension of level sets.

\begin{teo}\label{main}
If $\phi\in\mathcal{R}$ and $\psi\in\mathcal{R}_{\eta}$ then for all $\alpha\in U$
$$b(\alpha):=\dim J(\alpha)=\sup_{\mu\in \tilde{\mathcal{M}}(T)}\left\{\frac{h(\mu)}{\lambda(\mu)}: \frac{\int\phi\text{d}\mu}{\int\psi\text{d}\mu}=\alpha\right\}$$
and for $\alpha\in E\cap (\alpha_m,\alpha_M)$
$$b(\alpha):=\dim J(\alpha)=\lim_{\epsilon\to 0}\sup_{\mu\in \tilde{\mathcal{M}}(T)}\left\{\frac{h(\mu)}{\lambda(\mu)}:\frac{\int\phi\text{d}\mu}{\int\psi\text{d}\mu}\in (\alpha-\epsilon,\alpha+\epsilon)\right\}.$$
\end{teo}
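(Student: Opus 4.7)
My plan is to prove both inequalities in each part of the theorem, exploiting the reformulation of the quotient constraint $\int\phi\,d\mu/\int\psi\,d\mu=\alpha$ as the affine condition $\int(\phi-\alpha\psi)\,d\mu=0$, which reduces the problem to a constrained Birkhoff multifractal analysis for the one-parameter family of potentials $\phi_\alpha:=\phi-\alpha\psi$. For the lower bound, given an ergodic $\mu\in\tilde{\mathcal{M}}(T)$ with $\int\phi\,d\mu=\alpha\int\psi\,d\mu$, the ergodic theorem applied separately to $\phi$ and $\psi$ yields $S_n\phi(x)/S_n\psi(x)\to\alpha$ on a set of full $\mu$-measure, so $\mu(J(\alpha))=1$; the standard formula $\dim\mu=h(\mu)/\lambda(\mu)$ for ergodic invariant measures on countable Markov repellers then gives $\dim J(\alpha)\geq h(\mu)/\lambda(\mu)$. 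For non-ergodic $\mu$ in the feasible set, a Moran-set / interleaved-orbit construction of Barreira--Saussol type combines orbit blocks typical for two ergodic measures with ratios straddling $\alpha$ to realise the convex-combination value of $h(\mu)/\lambda(\mu)$; this gives the lower bound in both parts (in the $E$ case, interleaving measures with ratios in $(\alpha-\epsilon,\alpha+\epsilon)$ produces subsets of $J(\alpha)$ of dimension matching the $\epsilon$-relaxed supremum, and one lets $\epsilon\to 0$).

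\textbf{Upper bound for $\alpha\in U$.} The assumption $\alpha\notin[\underline\alpha,\overline\alpha]$ provides $\delta>0$ and a neighbourhood $V\ni 0$ with $|\phi(x)/\psi(x)-\alpha|>\delta$ on $V\cap\Lambda$; since the ratio along orbits of $x\in J(\alpha)$ converges to $\alpha$, such orbits must visit $V$ with vanishing asymptotic frequency. This permits approximation of $J(\alpha)$ by its intersections with the compact $T$-invariant subsystems $\Lambda_N$ consisting of points whose symbolic itineraries use only the first $N$ branches of $T$. On each compact $\Lambda_N$ the map is uniformly expanding and finite-branch Markov, so the classical multifractal analysis for quotients of Birkhoff averages yields the variational principle for $\dim(J(\alpha)\cap\Lambda_N)$. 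Passing $N\to\infty$ and using the approximation of the countable-shift Gurevich pressure by its finite-subsystem pressures, together with the vanishing-frequency control on $V$-excursions, transfers the bound to $\dim J(\alpha)$.

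\textbf{Upper bound for $\alpha\in E$ and main obstacle.} When $\alpha\in E$ the compact-approximation scheme breaks down, since orbits in $J(\alpha)$ may genuinely display cusp behavior asymptotic to $\alpha$. One instead covers $J(\alpha)\subseteq J_\epsilon(\alpha):=\{x:|S_n\phi(x)/S_n\psi(x)-\alpha|<\epsilon\text{ for all sufficiently large }n\}$ by cylinders of carefully chosen length, using the Renyi distortion bound to estimate their diameters and a Bowen-type covering argument to bound the Hausdorff sum in terms of the pressure constrained to measures with ratio in $(\alpha-\epsilon,\alpha+\epsilon)$; letting $\epsilon\to 0$ gives the statement. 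The principal technical obstacle is the $\alpha\in U$ upper bound: one must convert the pointwise cusp-avoidance inequality into a uniform quantitative control on orbit excursions near $0$ valid for every $x\in J(\alpha)$, and then verify that the constrained pressures on the subsystems $\Lambda_N$ genuinely converge to the unrestricted one, so that no dimension mass is lost at the cusp. Both steps depend crucially on the EMR expansion hypothesis and the Renyi distortion estimate.
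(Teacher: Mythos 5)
Your upper bound argument for $\alpha\in U$ has a genuine gap. The claim that orbits of $x\in J(\alpha)$ must visit a neighbourhood $V$ of $0$ with vanishing asymptotic frequency is false. Take $\alpha<\underline{\alpha}$; then near $0$ one has $\phi/\psi>\alpha+2\gamma$, but an orbit can spend a positive, bounded-away-from-one proportion of its $\psi$-weight in $V$ and compensate elsewhere with a region where $\phi/\psi<\alpha$, still achieving Birkhoff ratio exactly $\alpha$. What the hypothesis $\alpha\in U$ actually buys (Lemma \ref{bounded} in the paper) is a bound on $\int\psi\,d\mu$ for invariant measures $\mu$ whose $\phi/\psi$-ratio is near $\alpha$ --- not a frequency bound on individual orbits. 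More seriously, even with a frequency bound, $J(\alpha)\not\subset\bigcup_N\Lambda_N$ (points of $J(\alpha)$ can use arbitrarily large digits, just not too $\psi$-heavily), so $\sup_N\dim(J(\alpha)\cap\Lambda_N)$ is a priori only a lower bound for $\dim J(\alpha)$. The paper avoids this trap by proving the $\epsilon$-relaxed upper bound directly for all $\alpha$: cover $J(\alpha)$ by $J(\alpha,N,\epsilon)$, cover by $k$-cylinders $\mathcal{C}_k$, extract a finite subcollection $D_k$, build a $T^k$-Bernoulli measure on $D_k$, and read off an invariant measure whose ratio is $2\epsilon$-close to $\alpha$ and whose dimension is close to the covering exponent $t_k$. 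The sharp bound for $\alpha\in U$ then follows by a separate continuity argument for $\delta(\alpha)$, which is where Lemma \ref{bounded} is actually used (to permit convex combinations with controlled $\psi$-mass).

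On the lower bound your proposal is closer to what the paper does only in the $\alpha\in E$ case. For $\alpha\in U$ you do not need a hand-built Moran/interleaving construction for non-ergodic measures: the paper shows instead that the pressure $P(q(\phi-\alpha\psi)-(\delta(\alpha)-\epsilon)\log|T'|)>0$ for all $q$ by the variational principle (valid for any invariant $\mu$ with the right ratio), and then the approximation property of pressure produces a compact $T$-invariant subsystem $\Lambda_n$ on which the constrained pressure is still positive, so the corresponding equilibrium state on $\Lambda_n$ is an \emph{ergodic} measure with ratio $\alpha$ and dimension at least $\delta(\alpha)-\epsilon$. This is substantially cleaner than constructing Moran sets to realise $h(\mu)/\lambda(\mu)$ for non-ergodic $\mu$, and justifying that Moran dimension formula is itself a non-trivial step you have not carried out. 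The Gelfert--Rams-style $w$-measure construction (which is what you sketch) is genuinely needed in the paper only for $\alpha\in E\cap(\alpha_m,\alpha_M)$ when $\limsup_n\int\psi\,d\mu_n=\infty$, where no invariant measure with the required ratio and dimension exists (Proposition \ref{wmeasure}); there one does build a non-invariant measure by concatenating Egorov sets for a diverging sequence of ergodic measures.
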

Here $\lambda(\mu)$ is the Lyapunov exponent and $\mathcal{M}(T)$ is the space of $T-$invariant measures (see section \ref{st} for precise definitions).
A particular case we will be interested is when the function $\psi$ satisfies that $\lim_{x\to 0}\frac{\psi(x)}{\log |T'(x)|}=\infty$ and also $\phi$ and $\psi$ are such that $\alpha_M<\infty$. With these additional assumptions we are able to say more about the smoothness of the function $\alpha\to\dim J(\alpha)$.

\begin{teo}\label{analytic}
Let $\phi\in\mathcal{R}$ and $\psi\in\mathcal{R}_{\eta}$ be such that
\[\lim_{x\to 0}\frac{\psi(x)}{\log |T'(x)|}=\infty,\]
and that $\alpha_M<\infty$. There then exists three pairwise disjoint intervals $J_1,J_2,J_3$ such that
\begin{enumerate}
\item
$J_1\cup J_2\cup J_3=(\alpha_m,\alpha_M)$,
\item
$J_1\leq J_2\leq  J_3$
\item
The function $\alpha\to\dim X_{\alpha}$ is analytic on $J_1$ and $J_3$
\item
For $\alpha\in J_2$, $\dim X_{\alpha}=\dim\Lambda$
\item
It is possible that $J_1=\emptyset$, $J_3=\emptyset$ or that $J_2$ is a single point.
\end{enumerate}
\end{teo}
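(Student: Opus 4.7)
The plan is to combine the variational principle of Theorem~\ref{main} with a Bowen-type formula and the analyticity of pressure on countable Markov shifts. I would first introduce the two-parameter family of pressures
\[
\mathcal{F}_{\alpha}(q,t):=P\bigl(q(\phi-\alpha\psi)-t\log|T'|\bigr)
\]
and, whenever meaningful, let $t_{\alpha}(q)$ be the unique value of $t$ with $\mathcal{F}_{\alpha}(q,t)=0$. A Lagrange multiplier argument applied to the variational formula of Theorem~\ref{main} then reduces the problem to analysing the graph of $q\mapsto t_{\alpha}(q)$ and, in particular, to locating the values of $\alpha$ at which the infimum $\inf_{q}t_{\alpha}(q)=b(\alpha)$ ceases to be attained in the interior of the domain of finiteness.

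Next I would use the hypothesis $\psi(x)/\log|T'(x)|\to\infty$ as $x\to 0$, together with $\alpha_M<\infty$, to check that the tails of $q(\phi-\alpha\psi)$ are strictly dominated by $\log|T'|$ near the indifferent end of the partition. This guarantees that for each $\alpha\in(\alpha_m,\alpha_M)$ there is an open interval of $q$ on which $\mathcal{F}_{\alpha}(q,t)$ is finite and, by the thermodynamic formalism for countable Markov shifts (Sarig, Mauldin--Urbanski), real analytic. The implicit function theorem then gives real analyticity of $q\mapsto t_{\alpha}(q)$ on this interval, with derivative
\[
t_{\alpha}'(q)=-\frac{\int(\phi-\alpha\psi)\,d\mu_{q,t_{\alpha}(q)}}{\int\log|T'|\,d\mu_{q,t_{\alpha}(q)}},
\]
where $\mu_{q,t}$ is the equilibrium state of $q(\phi-\alpha\psi)-t\log|T'|$; an interior critical point of $t_{\alpha}$ is then precisely a measure admissible in the constrained variational principle, and analyticity of $\alpha\mapsto b(\alpha)$ follows from a second application of the implicit function theorem.

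Finally, let $\alpha^{-}$ and $\alpha^{+}$ be the extremal Birkhoff ratios $\int\phi\,d\mu/\int\psi\,d\mu$ attained by the equilibrium states at the two endpoints of the $q$-interval of finiteness, and set $J_{1}=(\alpha_m,\alpha^{-})$, $J_{2}=[\alpha^{-},\alpha^{+}]$, $J_{3}=(\alpha^{+},\alpha_M)$. For $\alpha\in J_{1}\cup J_{3}$ the previous step gives item~(3). For $\alpha\in J_{2}$ the infimum is not realised by any interior equilibrium state; minimising sequences must let mass escape towards $0$, and under the standing assumption on $\psi$ their Birkhoff ratios sweep out $[\underline{\alpha},\overline{\alpha}]$ while the ratio $h(\mu)/\lambda(\mu)$ can be pushed arbitrarily close to $\dim\Lambda$ by mixing in a full-dimension measure supported on a large compact subsystem. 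The degenerate cases in~(5) correspond to $\alpha^{-}=\alpha_m$, $\alpha^{+}=\alpha_M$, or $\alpha^{-}=\alpha^{+}$. The main obstacle will be the matching lower bound $b(\alpha)\ge\dim\Lambda$ on the plateau: for each $\alpha\in J_{2}$ one has to construct invariant probability measures whose Birkhoff ratio is close to $\alpha$ and whose $h(\mu)/\lambda(\mu)$ is close to $\dim\Lambda$ simultaneously, and then transfer this through the $\varepsilon$-limit in the second clause of Theorem~\ref{main} so that the conclusion applies to every $\alpha\in J_{2}\cap E$ rather than only to $\alpha\in U$.
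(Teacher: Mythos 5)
Your overall plan — analysing the two-parameter pressure $\mathcal{F}_\alpha(q,t)=P(q(\phi-\alpha\psi)-t\log|T'|)$, solving $\mathcal{F}_\alpha=0$ implicitly, locating the critical point in $q$, and splitting $(\alpha_m,\alpha_M)$ into a plateau flanked by two analyticity intervals — is the same as the paper's, so the outline is sound. But there are three concrete issues.

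First, you assert that $\psi/\log|T'|\to\infty$ and $\alpha_M<\infty$ already guarantee, for \emph{each} $\alpha\in(\alpha_m,\alpha_M)$, an open interval of $q$ on which $\mathcal{F}_\alpha(q,t)$ is finite. That is not automatic. In the paper this finiteness is established only for $\alpha\in U$ (Lemma~\ref{finiteness}), and it genuinely uses the fact that $\alpha$ lies strictly on one side of $[\underline\alpha,\overline\alpha]$: one first splits the invariant measures by whether $\int\phi\,d\mu/\int\psi\,d\mu$ is on the $\gamma$-side of $\alpha$, then invokes Lemma~\ref{bounded} (to bound $\int\psi\,d\mu$) on one piece and Lemma~\ref{bounded2} (to compare $\int\psi\,d\mu$ with $\lambda(\mu)$) on the other. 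Without restricting to $\alpha\in U\supset J_1\cup J_3$, the uniform bound on $h(\mu)+q\int(\phi-\alpha\psi)\,d\mu-t\lambda(\mu)$ can fail. You need to state explicitly that the analyticity machinery only runs on $U$, and that the plateau $J_2$ is what absorbs $E$.

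Second, the "second application of the implicit function theorem" to get analyticity of $\alpha\mapsto b(\alpha)$ needs a non-degeneracy hypothesis: you must rule out $\partial_q^2\mathcal{F}_\alpha\equiv 0$ at the critical point, i.e. you must show $\phi-\alpha\psi$ is not cohomologous to a constant. This is exactly where the hypothesis $\alpha_m<\alpha<\alpha_M$ is used in the paper: one produces two invariant measures with $\int(\phi-\alpha\psi)\,d\mu_1<0<\int(\phi-\alpha\psi)\,d\mu_2$, so $q\mapsto\mathcal{F}_\alpha(q,t)$ is strictly convex and the Jacobian of $(\mathcal{F}_\alpha,\partial_q\mathcal{F}_\alpha)$ in $(t,q)$ is invertible. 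As written, your argument does not verify this and the implicit function theorem could in principle fail to apply.

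Third, you correctly flag the matching lower bound $b(\alpha)\geq\dim\Lambda$ on the plateau as "the main obstacle," but a sketch of how to overcome it is required for the proof to count. The paper resolves it by taking a measure $\mu$ with $h(\mu)/\lambda(\mu)>\dim\Lambda-\epsilon/2$ and ratio $\gamma$, and combining it convexly with a sequence $\nu_n$ whose ratios $\int\phi\,d\nu_n/\int\psi\,d\nu_n\to\underline\alpha$ (or $\overline\alpha$) while $p_n\lambda(\nu_n)\to 0$ and $p_n\int\psi\,d\nu_n\to\infty$; this produces measures with ratio equal to any prescribed $\alpha\in E$ and dimension ratio arbitrarily close to $\dim\Lambda$, which feeds into Lemma~\ref{pospres}. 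You should either reproduce an argument of this kind or point to it; otherwise item (4) of the theorem remains unproved.
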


The rest of the paper is laid out as follows. In section 2 we give some preliminary results necessary for the rest of paper. In sections 3 and 4 respectively the upper and lower bounds for Theorem \ref{main} are proved. In section 5 Theorem \ref{analytic} is proved, section 6 shows there may be examples with discontinuities in the spectrum and gives applications to non-uniformly expanding maps. Section 7 looks at the case of suspension flows and finally in section 8 we provide examples coming form number theory.

\section{Preliminaries}
This section is devoted to provide the necessary tools and definitions that will be used in the rest of the paper.

\subsection{Thermodynamic Formalism for EMR maps.} \label{st}

In order to define the thermodynamic quantities and to establish their properties for an EMR map we will make use of the analogous theory developed  at a symbolic level. Let $\mathbb{N}$ be the countable alphabet, the \emph{full-shift} is the pair $(\Sigma, \sigma)$ where $\Sigma = \left\{ (x_i)_{i \ge 1} : x_i \in \mathbb{N}\right\},$
and $\sigma: \Sigma \to \Sigma$ is the \emph{shift} map  defined by $\sigma(x_1x_2 \cdots)=(x_2 x_3\cdots)$. We equip $\Sigma$ with the topology generated by the cylinders sets
\[ C_{i_1 \cdots i_n}= \{x \in \Sigma : x_j=i_j \text{ for } 1 \le j \le n \}.\]
The Markov structure assumed in the definition of EMR map implies that there exists a continuous map, the natural projection,  $\pi :\Sigma \to \Lambda$ such that $\pi \circ \sigma = T  \circ  \pi$.  Moreover the map $\pi:\Sigma \to \Lambda \setminus \bigcup_{n \in \N} T^{-n} E$ is surjective and injective except on at most a countable set of points.  Denote by $I(i_1, \dots i_n)= \pi (C_{i_1 \dots i_n})$ the cylinder of length $n$ for $T$. For a function $f\in\Sigma$  and $n\geq 1$ we will define the $n-$\emph{variations} of $f$ by
$$\text{var}_n(f)=\sup_{(i_1,\ldots,i_n)\in\N^n}{\sup_{x,y\in C_{i_1 \cdots i_n}}}|f(x)-f(y)|$$
and say that $f$ is locally H\"{o}lder if there exists $0<\gamma<1$ and $A>0$ such that for all $n \geq 1$ we have $\text{var}_n(f) \leq A \gamma^n$.  We now define the main object in thermodynamic formalism,
\begin{defi}
The \emph{topological pressure} of a potential $\phi:\Lambda\to\R$  such that $\phi\circ\pi$  is locally H\"{o}lder is defined by
 \[P_T(\phi)=  \sup \left\{ h(\mu) +\int  \phi \, d \mu : -\int  \phi \, d
\mu < \infty \textrm{ and } \mu\in \mathcal{M}_T  \right\}, \]
where $ \mathcal{M}_T$ denotes the space of $T-$invariant probability measures. A measure attaining the supremum is called an \emph{equilibrium measure} for $\phi$.\end{defi}
The following definition of pressure (at a symbolic level) is due to Mauldin and Urba\'nski \cite{mu},
\begin{defi}
Let $\phi: \Sigma \to \mathbb{R}$ be a potential of summable variations, the \emph{pressure} of $\phi$ is defined by
\begin{equation}
P_{\sigma}(\phi) = \lim_{n \to \infty} \frac{1}{n} \log \sum_{\sigma^n(x)=x} \exp \left( \sum_{i=0}^{n-1} \phi(\sigma^i x) \right).
\end{equation}
\end{defi}
The above limit always exits,  but it can be infinity. The next proposition relates these two notions and allows us to translate results obtained by Mauldin and Urba\'nski \cite{mu, mubook}
and by Sarig \cite{sa1, sar, sa3} to our setting. For $n\in\N$ we will denote
$$\Sigma_n=\{x\in\Sigma:x_i\leq n\}$$
and $\Lambda_n=\pi(\Sigma_n)$. Note that $\Lambda_n$ is a $T$-invariant set.
\begin{prop}\label{tf_maps}
Let $T$ be an EMR map. If $\phi:\Lambda\to\R$  such that  $\Phi=\phi\circ\pi$  is locally H\"older then
\begin{enumerate}
\item
$$P_T(\phi)=P_{\sigma}(\Phi).$$
\item \emph{(Approximation property.)}
\begin{equation*}
P( \phi) = \sup \{ P_{\sigma|K}( \Phi) : K \subset (0,1] : K \ne \emptyset \text{ compact and } \sigma\text{-invariant}  \},
\end{equation*}
where $P_{\sigma|K}( \phi)$ is the classical topological pressure on $K$ (for a precise definition see \cite[Chapter 9]{wa}).
In particular
$$P(\phi)=\sup_{n\in\N}\{P_{T|\Lambda_n}(\phi)\}.$$
\item \emph{(Regularity.)}
If $P(\phi)< \infty$ then there exists a \emph{critical value} $t^{*} \in (0, 1]$ such that for every $t <t^{*}$we have that $P(t \phi)= \infty$ and for every $t > t^{*}$we have that $P(t \phi)< \infty$. Moreover, if $t>t^{*}$ then
the function $t \to P(t \phi)$  is real analytic, strictly convex and every potential $t \phi$ has an unique equilibrium measure. Moreover the function $t \to P_{T|\Lambda_n}(t \phi)$ is analytic and convex for all $t\in\R$.
\end{enumerate}
\end{prop}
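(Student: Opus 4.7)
The plan is to transfer everything to the symbolic model via the coding map $\pi: \Sigma \to \Lambda$. Since $\pi$ is surjective and fails to be injective only on the countable set $\bigcup_n T^{-n}E$ (iterated preimages of partition endpoints), the push-forward $\tilde\mu \mapsto \pi_*\tilde\mu$ induces a bijection between $\mathcal{M}_\sigma$ and $\mathcal{M}_T$ which preserves Kolmogorov--Sinai entropy and satisfies $\int \phi \, d(\pi_*\tilde\mu) = \int \Phi \, d\tilde\mu$, since no $T$-invariant probability measure can charge a countable set of non-periodic points (and the periodic exceptions are easily absorbed). For part (1), $P_T(\phi) = P_\sigma(\Phi)$ then follows by matching the two variational principles: on the symbolic side I would invoke Sarig's variational principle for countable Markov shifts with summable-variation potentials, noting that local H\"older regularity of $\Phi$ (which is exactly our hypothesis) implies summable variation.

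For part (2), I would establish the approximation property at the symbolic level, where Mauldin--Urba\'nski and Sarig prove $P_\sigma(\Phi) = \sup\{ P_{\sigma|K}(\Phi): K \subset \Sigma \text{ compact and } \sigma\text{-invariant}\}$; pulling back via $\pi$ gives the stated equality on $\Lambda$. The inequality $\sup_{n} P_{T|\Lambda_n}(\phi) \le P_T(\phi)$ is immediate from the variational principle, since any $T|\Lambda_n$-invariant measure is $T$-invariant. For the reverse direction, given $\mu \in \mathcal{M}_T$ with $h(\mu) + \int \phi \, d\mu$ close to $P_T(\phi)$, I would approximate $\mu$ by measures supported on $\Lambda_n$ (via the usual truncation and renormalization procedure on the symbolic side), and use upper semicontinuity of the integral term together with convergence of entropy under such truncations to conclude. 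Finally, since each $\Lambda_n$ is compact and $\sigma$-invariant, and since every compact $\sigma$-invariant subset of $\Sigma$ is contained in some $\Sigma_n$, the two formulations of the sup coincide.

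For part (3), I would apply the spectral theory of the Ruelle--Perron--Frobenius operator $\mathcal{L}_{t\Phi}$ on an appropriate Banach space of locally H\"older observables, as developed by Sarig and Mauldin--Urba\'nski. The existence of the critical value $t^\ast$ follows from convexity of $t \mapsto P(t\phi)$ combined with the dichotomy that the Gurevich pressure of a locally H\"older potential is either finite or $+\infty$; monotonicity in $t$ (after using that $\phi$ can be normalized appropriately) forces the divide to occur at a single $t^\ast \in (0,1]$. For $t > t^\ast$, $\mathcal{L}_{t\Phi}$ admits a simple dominant eigenvalue equal to $e^{P(t\phi)}$, isolated from the rest of the spectrum; analyticity of $t \mapsto P(t\phi)$ then follows from analytic perturbation theory, strict convexity from the fact that $\Phi$ is not cohomologous to a constant (which would contradict $\alpha_m \ne \alpha_M$ in the application), and uniqueness of the equilibrium measure from the spectral gap. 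The claim that $t \mapsto P_{T|\Lambda_n}(t\phi)$ is analytic and convex on all of $\R$ is the classical Bowen--Ruelle theorem for a mixing subshift of finite type on finitely many symbols.

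The main obstacle is making the translation between $\mathcal{M}_T$ and $\mathcal{M}_\sigma$ fully rigorous --- specifically, checking that the countable set of non-injectivity of $\pi$ carries no mass under any relevant invariant measure, and verifying that the Renyi and Markov conditions in Definition \ref{maps} are precisely what is needed for $\Phi = \phi \circ \pi$ to fall under the scope of the Mauldin--Urba\'nski and Sarig spectral machinery. Once these bookkeeping points are settled, parts (1)--(3) are essentially the projections of known symbolic results.
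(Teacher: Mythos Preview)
Your proposal is essentially correct and matches the paper's treatment: the paper does not give a proof of this proposition at all, but simply states it as a translation of results of Mauldin--Urba\'nski \cite{mu,mubook} and Sarig \cite{sa1,sar,sa3} to the EMR setting via the symbolic coding. Your sketch --- pushing forward measures along $\pi$ to identify $\mathcal{M}_T$ with $\mathcal{M}_\sigma$, invoking Sarig's variational principle and the Mauldin--Urba\'nski approximation property on the symbolic side, and appealing to the spectral gap for the Ruelle operator for the regularity claims --- is exactly the standard route and is what the cited references provide.

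One minor caution on part (3): your argument for the existence of a single critical value $t^\ast$ relies on monotonicity of $t\mapsto P(t\phi)$, which is not guaranteed for a general locally H\"older $\phi$ (the potential need not have a sign). In the paper's applications the relevant potentials are of the form $-\log|T'|$ or combinations that make the monotonicity and finiteness structure clear, so this is not an issue in context; but as stated in the proposition the claim about $t^\ast$ should really be read as a direct citation of the phase-transition results in \cite{sar} rather than something to re-derive from convexity alone.
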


Note that if $T$ is an EMR map then the potential $\log |T'| \circ\pi$ is locally H\"older and $P(-\log|T'|) < \infty$. If $\mu \in \mathcal{M}_T$ then  the integral
\[\lambda(\mu):= \int \log |T'| \, d\mu, \]
will be called the \emph{Lyapunov exponent} of $\mu$. Of particular interest will be  the following classes of potentials,
\begin{defi}\label{pot}
We define the following collections of potentials
$$\mathcal{R}=\left\{\phi:\Lambda\to\R:\phi\text{ is uniformly bounded below and } \phi\circ\pi\text{ is locally H\"{o}lder}\right\}$$
and for $\eta>0$
$$\mathcal{R}_{\eta}=\left\{\phi \in \mathcal{R}: \text{for every } x\in\Lambda \text{ we have  }\phi(x)\geq \eta \right\}.$$
\end{defi}

\subsection{Hausdorff Dimension} \label{hausdorff}
In this subsection we recall basic definitions  from dimension theory. We refer to the books \cite{ba, fa} for further details. A countable collection of sets $\{U_i \}_{i\in N}$ is called a $\delta$-cover of $F \subset\R$ if $F\subset\bigcup_{i\in\N} U_i$, and  for every $i\in\N$ the sets $U_i$ have diameter $|U_i|$ at most $\delta$. Let $s>0$, we define
\[
 H^s_{\delta} (F) :=\inf \left\{ \sum_{i=1}^{\infty} |U_i|^s : \{U_i \}_i \text{ is a } \delta\text{-cover of } F \right\}
\]
and
\[ H^s(F):=  \lim_{\delta \to 0} H^s_{\delta} (F).\]
The \emph{Hausdorff dimension} of the set $F$ is defined by
\[
{\dim}(F) := \inf \left\{ s>0 : H^s(F) =0 \right\}.
\]
We will also define the \emph{Hausdorff dimension} of a probability measure $\mu$ by
\[
{\dim}(\mu) := \inf \left\{ \dim(Z): \mu(Z)=1 \right\}.
\]

\section{Proof of Upper bound of Theorem \ref{main}}
 Throughout this section we will let
let $\phi\in\mathcal{R}$ and $\psi\in\mathcal{R}_{\eta}$. We wish to prove that
\begin{equation}\label{ub1}
\dim J(\alpha)\leq\sup_{\mu\in \tilde{\mathcal{M}}(T)}\left\{\frac{h(\mu)}{\lambda(\mu)}:\frac{\int\phi\text{d}\mu}{\int\psi\text{d}\mu}=\alpha\right\}:=\delta(\alpha).
\end{equation}
However to prove this we will need that $\alpha\in U$. When $\alpha\notin U$, in general, we can only show that
\begin{equation}\label{ub2}
\dim J(\alpha)\leq\lim_{\epsilon\to 0}\sup_{\mu\in \tilde{\mathcal{M}}(T)}\left\{\frac{h(\mu)}{\lambda(\mu)}:\left|\frac{\int\phi\text{d}\mu}{\int\psi\text{d}\mu}-\alpha\right|\leq\epsilon\right\}.
\end{equation}
Our method will be to prove (\ref{ub2}) for all $\alpha\in (\alpha_m,\alpha_M)$ and then to deduce (\ref{ub1}) for $\alpha\in U$ by showing that $\delta$ is continuous in this region.
We first show the continuity of $\delta(\alpha)$ when $\alpha\in U$. To do this we need the following preparatory Lemma about the set $U$. This Lemma will also be used in the proof of Theorem \ref{analytic}. Denote by $B(\alpha,\gamma)$ the ball of center $\alpha$ and radius $\gamma$.

\begin{lema}\label{bounded}
For any $\alpha\in U$ there exists $\gamma>0$ and $C_1>0$ such that if $\mu \in \tilde{\mathcal{M}}_T$ with $\frac{\int\phi\text{d}\mu}{\int\psi\text{d}\mu}\in B(\alpha,\gamma)$ then $\int\psi\text{d}\mu\leq C_1$.
\end{lema}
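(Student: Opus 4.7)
The plan is to proceed by contradiction, exploiting the fact that $\alpha \in U$ means $\alpha$ lies outside $[\underline{\alpha},\overline{\alpha}]$, so the ratio $\phi/\psi$ stays bounded away from $\alpha$ on a neighborhood of the accumulation point $0$. Since $\psi$ is only possibly unbounded near $0$ (the intervals $I_i$ accumulate only there), mass that makes $\int \psi\,d\mu$ large must sit close to $0$, where $\phi/\psi$ is bounded away from $\alpha$; so $\int \psi\,d\mu$ cannot grow without forcing the ratio away from $\alpha$.

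Concretely, I would split into the two cases $\alpha>\overline{\alpha}$ and $\alpha<\underline{\alpha}$; they are symmetric so I only treat the first. From the definition of $\overline{\alpha}$ as the sup of limits along sequences $x_n\to 0$, a standard subsequence argument gives
\[
\limsup_{\substack{x\to 0\\ x\in\Lambda}}\frac{\phi(x)}{\psi(x)}\leq \overline{\alpha}.
\]
So I can fix $\gamma>0$ small enough that $\alpha-\gamma>\overline{\alpha}$ and then pick an intermediate value $\beta\in(\overline{\alpha},\alpha-\gamma)$ together with a $\delta>0$ such that $\phi(x)\leq \beta\psi(x)$ for every $x\in A:=\Lambda\cap(0,\delta)$. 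The complement $B:=\Lambda\cap[\delta,1]$ meets only finitely many of the $I_i$ (since $0$ is the unique accumulation point of endpoints), and on this finite union both $\phi$ and $\psi$ are continuous hence bounded by some constant $M$.

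Now assume $\mu\in\tilde{\mathcal{M}}(T)$ satisfies $\int\phi\,d\mu/\int\psi\,d\mu\in B(\alpha,\gamma)$; in particular $\int\phi\,d\mu>(\alpha-\gamma)\int\psi\,d\mu$. Splitting the integrals over $A$ and $B$,
\[
(\alpha-\gamma)\int\psi\,d\mu<\int\phi\,d\mu\leq \beta\int_A\psi\,d\mu+M,
\]
so that
\[
(\alpha-\gamma-\beta)\int_A\psi\,d\mu\leq M-(\alpha-\gamma)\int_B\psi\,d\mu+\,\text{(bounded terms)}.
\]
Since $\alpha-\gamma-\beta>0$, rearranging gives an upper bound on $\int_A\psi\,d\mu$ depending only on $\alpha$, $\gamma$, $\beta$, and $M$; adding the contribution $\int_B\psi\,d\mu\leq M$ then bounds $\int\psi\,d\mu$ by a constant $C_1$ independent of $\mu$.

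The main (and really only) obstacle is the bookkeeping for the ``bounded part'' on $B$: one has to verify that $B$ intersects only finitely many cylinders, so that the locally Hölder potentials $\phi$ and $\psi$ are uniformly bounded there, and to handle the case $\alpha<\underline{\alpha}$ by reversing inequalities and using $\liminf_{x\to 0}\phi(x)/\psi(x)\geq \underline{\alpha}$. Everything else is a one-line algebraic manipulation.
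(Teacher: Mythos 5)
Your argument is correct and follows essentially the same route as the paper's: split $\Lambda$ into a neighbourhood of $0$ where $\phi/\psi$ is pinned on one side of $\alpha$ (here via $\phi\le\beta\psi$ with $\beta<\alpha-\gamma$) and the remaining finite union of cylinders where $\phi,\psi$ are bounded, then combine the two bounds with the hypothesis $\int\phi\,d\mu/\int\psi\,d\mu\in B(\alpha,\gamma)$ to control $\int\psi\,d\mu$. The paper treats the mirror case $\alpha<\underline{\alpha}$ and uses $\alpha+2\gamma$ with $\gamma=(\underline{\alpha}-\alpha)/3$ in place of your intermediate $\beta$, but these are cosmetic differences.
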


\begin{proof}
We will let $\alpha\in U$ and assume that $\alpha<\underline{\alpha}$, since the case when $\alpha>\overline{\alpha}$ can be treated analogously. We let $\gamma=\frac{\underline{\alpha}-\alpha}{3}$ and note that by the definition of $U$ there exists $y' \in (0,1)$ such that if $x\leq y'$ then $\phi(x)\geq (\alpha+2\gamma)\psi(x)$.
We also let $C_2 \in \R$ be such that
\[ \max \left\{\sup_{x\geq y'}\{|\phi(x)|\},\sup_{x\geq y'}\{|\psi(x)|\} \right\} \leq C_2.\]
Assume that $\mu$ is a $T$-invariant probability measure such that
\begin{equation*}
 \int\psi\text{d}\mu<\infty \quad \text{ and } \quad \frac{\int \phi\text{d}\mu}{ \int \psi\text{d}\mu} \in B(\alpha,\gamma).
\end{equation*}
 We can estimate
$$(\alpha+\gamma)\int\psi\text{d}\mu\geq\int\phi\text{d}\mu\geq\int_{x\geq y'}\phi(x)\text{d}\mu+(\alpha+2\gamma)\int_{x\leq y'}\psi(x)\text{d}\mu$$
and rearrange to get
$$-\gamma \int_{x\leq y'}\psi(x)\text{d}\mu +(\alpha+\gamma)\int_{x\geq y'}\psi(x)\text{d}\mu -\int_{x\geq y'}\phi(x)\text{d}\mu \geq 0.$$
Thus, using the definition of $C_2$ we obtain
$$\gamma \int_{x\leq y'}\psi(x)\text{d}\mu \leq C_2|\alpha+\gamma+1|.$$
Therefore,
$$\int\psi\text{d}\mu\leq C_2\left(\frac{|\alpha+\gamma+1|}{\gamma}+1\right)$$
which completes the proof.
 \end{proof}

\begin{lema}\label{continuity}
The function $\delta:(\alpha_m,\alpha_M)\to \R$ defined by,
$$\delta(\alpha):=\sup_{\mu\in \tilde{\mathcal{M}}(T)}\left\{\frac{h(\mu)}{\lambda(\mu)}: \frac{\int\phi\text{d}\mu}{\int\psi\text{d}\mu}= \alpha \right\}$$
is continuous on $U$.
\end{lema}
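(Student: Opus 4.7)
The plan is to prove continuity at each $\alpha_0 \in U$ by establishing both upper and lower semicontinuity via a single perturbation scheme based on convex combinations of measures. Let $\mu \in \tilde{\mathcal{M}}(T)$ have ratio $\alpha_\mu = \int\phi\,d\mu / \int\psi\,d\mu$, and let $\nu$ be an auxiliary measure supported on some finite-alphabet subsystem $\Lambda_N$ with ratio $\alpha_\nu \neq \alpha_\mu$. A direct computation for the convex combination $\mu_t := (1-t)\mu + t\nu$ gives
\[
\alpha_{\mu_t} - \alpha_\mu = \frac{t\,(\alpha_\nu - \alpha_\mu)\int\psi\,d\nu}{(1-t)\int\psi\,d\mu + t\int\psi\,d\nu},
\]
so shifting the ratio by $\Delta\alpha$ requires $t$ of order $|\Delta\alpha|\cdot\int\psi\,d\mu$, with the constant depending only on $\nu$. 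Since $h$ and $\lambda$ are affine in $\mu$, the quotient $F(\mu) := h(\mu)/\lambda(\mu)$ shifts by
\[
F(\mu_t) - F(\mu) = \frac{t\,(h(\nu)\lambda(\mu) - h(\mu)\lambda(\nu))}{\lambda(\mu)\,\lambda(\mu_t)},
\]
and the EMR expansion condition forces $\lambda(\mu) \geq \log\xi > 0$ uniformly (applying Ruelle's inequality to $T^N$ also yields $h(\mu) \leq \lambda(\mu)$), so $|F(\mu_t) - F(\mu)| \leq C(\nu)\, t$ uniformly in $\mu$.

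The two semicontinuity statements follow from this. For \emph{lower} semicontinuity, pick $\nu_\pm$ supported on some $\Lambda_N$ with ratios straddling $\alpha_0$; such measures exist because $\alpha_0 \in (\alpha_m, \alpha_M)$ and the ratios attained on $\Lambda_N$ fill up $(\alpha_m, \alpha_M)$ as $N \to \infty$ (cf. Proposition~\ref{tf_maps}(2)). Given $\alpha_n \to \alpha_0$ and a near-optimal $\mu_0$ at $\alpha_0$, perturbing $\mu_0$ with the appropriate $\nu_\pm$ produces $\mu_n$ of ratio $\alpha_n$ with parameter $t_n \to 0$ (since $\int\psi\,d\mu_0$ is a fixed constant), so $F(\mu_n) \to F(\mu_0)$, giving $\liminf_n \delta(\alpha_n) \geq \delta(\alpha_0)$. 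For \emph{upper} semicontinuity, take $\mu_n$ with ratio $\alpha_n$ satisfying $F(\mu_n) \geq \delta(\alpha_n) - 1/n$ and perturb each back to a measure $\mu_n'$ of ratio $\alpha_0$; provided $t_n \to 0$ one concludes $\limsup_n \delta(\alpha_n) \leq \limsup_n F(\mu_n) = \limsup_n F(\mu_n') \leq \delta(\alpha_0)$.

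The main obstacle, and precisely where the hypothesis $\alpha_0 \in U$ becomes indispensable, is ensuring $t_n \to 0$ in the upper-semicontinuity step. Since $t_n$ is of order $|\alpha_n - \alpha_0| \cdot \int\psi\,d\mu_n$, everything hinges on bounding $\int\psi\,d\mu_n$ uniformly for the maximising sequence. This is exactly what Lemma~\ref{bounded} delivers: it provides $C_1$ such that $\int\psi\,d\mu \leq C_1$ for all $\mu \in \tilde{\mathcal{M}}(T)$ with ratio in a neighborhood of $\alpha_0 \in U$. Outside $U$ no such bound is available in general, as a maximising sequence can concentrate mass near the accumulation point $0$ where $\phi/\psi$ accumulates on $[\underline{\alpha},\overline{\alpha}]$, which is the structural reason that only the weaker limiting formula of Theorem~\ref{main} can be proved there.
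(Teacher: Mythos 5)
Your proof is correct and follows essentially the same route as the paper: perturb measures by convex combination with auxiliary measures whose ratios straddle $\alpha_0$, control the shift in $h/\lambda$ by the perturbation parameter $t$, and invoke Lemma~\ref{bounded} to bound $\int\psi\,d\mu_n$ uniformly along a maximising sequence so that $t_n\to 0$ in the upper-semicontinuity direction. Your write-up makes explicit the elementary algebraic identities and the role of $\lambda(\mu)\geq\log\xi$ and $h(\mu)\leq\lambda(\mu)$ that the paper leaves implicit, and it correctly pinpoints the upper-semicontinuity step as the place where the hypothesis $\alpha_0\in U$ is actually used.
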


\begin{proof}
Let $\alpha\in U$. We will show that
\begin{eqnarray*}
\liminf_{\epsilon\to 0}\inf\{\delta(\gamma):\gamma\in (\alpha-\epsilon,\alpha+\epsilon)\}\geq \delta(\alpha) \text{ and } &\\
\limsup_{\epsilon\to 0}\sup\{\delta(\gamma):\gamma\in (\alpha-\epsilon,\alpha+\epsilon)\}\leq \delta(\alpha).
\end{eqnarray*}
We can find two measures $\mu_1,\mu_2\in \tilde{\mathcal{M}}_T$ such that $\lambda(\mu_1),\int \psi \text{d}\mu_1,\int  \psi \text{d}\mu_2<\infty$ and
$$\frac{\int\phi\text{d}\mu_1}{\int\psi\text{d}\mu_1}<\alpha<\frac{\int\phi\text{d}\mu_2}{\int\psi\text{d}\mu_2}.$$
By Lemma \ref{bounded} there also exists a constant $K>0$ such that for any $\epsilon>0$ we can find a measure $\mu$ such that $\int \psi\text{d}\mu\leq K$,
$\frac{\int\phi\text{d}\mu_1}{\int\psi\text{d}\mu_1}=\alpha$ and $\frac{h(\mu)}{\lambda(\mu)}\geq\delta(\alpha)-\epsilon$. To prove that $\liminf_{\epsilon\to 0}\inf\{\delta(\gamma):\gamma\in (\alpha-\epsilon,\alpha+\epsilon)\}\geq \delta(\alpha)$ we simply need to consider the following convex families of measures $p\mu+(1-p)\mu_1$ and $p\mu+(1-p)\mu_2$, where $p\in (0,1)$.

To show that $\limsup_{\epsilon\to 0}\sup\{\delta(\gamma):\gamma\in (\alpha-\epsilon,\alpha+\epsilon)\}\leq \delta(\alpha)$ we consider a sequence of $T$-invariant measures $(\nu_n)_n$ such that
\begin{equation*}
\lim_{n\to \infty} \frac{h(\nu_n)}{\lambda(\nu_n)}\geq \delta(\alpha) \text{ and } \lim_{n\to\infty}\frac{\int\phi\text{d}\nu_n}{\int\psi\text{d}\nu_n}=\alpha.
\end{equation*}
By considering the appropriate convex combination with either $\mu_1$ or $\mu_2$ we can now find a sequence $(\eta_n)_n$ of $T$-invariant measures with
$\frac{\int\phi\text{d}\eta_n}{\int\psi\text{d}\eta_n}=\alpha$ and $\lim_{n\to \infty} \frac{h(\eta_n)}{\lambda(\eta_n)}\geq \delta(\alpha)$. The result immediately follows.
\end{proof}

Let $C>0$ be a constant such that
 \[ \max\left\{\sum_{k=1}^{\infty} \text{var}_k (\phi),\sum_{k=1}^{\infty} \text{var}_k(\psi),\sum_{k=1}^{\infty} \text{var}_k(\log |T'|) \right\}\leq  C. \]
We work in a similar way to \cite{ij}. Let  $S_k\phi(x):= \sum_{i=0}^{k-1}\phi(T^ix)$ and
$$J_{\alpha,N,\epsilon}:=\left\{x\in\Lambda:\frac{S_k\phi(x)}{S_k\psi(x)}\in (\alpha-\epsilon,\alpha+\epsilon)\text{ for every }k\geq N\right\}.$$
Note that for any $\epsilon>0$ we have
$$J_{\alpha}\subset\cup_{N=1}^{\infty}J(\alpha,N,\epsilon).$$
So we can obtain upper bounds of the dimension of the set $J(\alpha)$ by obtaining upper bounds for the dimension of $J(\alpha,N,\epsilon)$. For $k\geq N$  we will define covers by
$$\mathcal{C}_k=\{I(i_1,\ldots,i_k):I(i_1,\ldots,i_k)\cap J(\alpha,N,\epsilon)\neq\emptyset\}.$$
In \cite{ij} analogous covers were defined in Section 3. However, in that setting these covers had finite cardinality whereas here the cardinality can be infinite which will cause additional difficulties.
From now on $\alpha$ and $\epsilon>0$ will be fixed. We will let
$$t_k:=\inf\left\{t \in \R:\sum_{I(i_1,\ldots,i_k)\in \mathcal{C}_k}|I(i_1,\ldots,i_k)|^t\leq 1 \right\}.$$
A covering argument then gives that $\dim J(\alpha,N,\epsilon)\leq\limsup_{k\rightarrow\infty}t_k$. We wish to relate the values $t_k$ to $T$-invariant probability measures. We start with the following lemma.

\begin{lema}\label{simpest}
There exists $K\in\N$ such that for all $k\geq K$, $I(i_1,\ldots,i_k)$ and $x,y\in I(i_1,\ldots,i_k)$ we have
$$\frac{S_k\phi(x)}{S_k\psi(x)}-\frac{S_k\phi(y)}{S_k\psi(y)}\leq\epsilon.$$
\end{lema}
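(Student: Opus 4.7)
The approach is a bounded-distortion argument. Two ingredients come immediately from the hypotheses: first, because $\phi\circ\pi$ and $\psi\circ\pi$ are locally H\"older and because $T^jx$ and $T^jy$ both lie in the cylinder $I(i_{j+1},\ldots,i_k)$ of length $k-j$ whenever $x,y\in I(i_1,\ldots,i_k)$, telescoping yields
\[
|S_k\phi(x)-S_k\phi(y)|\leq\sum_{l=1}^{k}\text{var}_l(\phi)\leq C,\qquad |S_k\psi(x)-S_k\psi(y)|\leq\sum_{l=1}^{k}\text{var}_l(\psi)\leq C;
\]
second, the pointwise lower bound $\psi\geq\eta$ gives $S_k\psi(x)\geq k\eta$.

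These two estimates alone are not sufficient, since $\phi\in\mathcal{R}$ is only bounded below and $|S_k\phi(x)|$ has no a priori linear-in-$k$ upper bound; this is the main obstacle. To overcome it I would use that the lemma is applied in the context of cylinders $I(i_1,\ldots,i_k)\in\mathcal{C}_k$ with $k\geq N$, so there is a point $z\in I(i_1,\ldots,i_k)\cap J(\alpha,N,\epsilon)$ satisfying $|S_k\phi(z)|\leq(|\alpha|+\epsilon)\,S_k\psi(z)$. Applying the distortion bound to the pair $(x,z)$ then gives a uniform, cylinder-independent bound
\[
|S_k\phi(x)|\leq(|\alpha|+\epsilon)\,S_k\psi(x)+C',
\]
with $C'=(|\alpha|+\epsilon+1)C$.

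The conclusion then follows from the algebraic identity
\[
\frac{S_k\phi(x)}{S_k\psi(x)}-\frac{S_k\phi(y)}{S_k\psi(y)}=\frac{S_k\phi(x)-S_k\phi(y)}{S_k\psi(y)}+\frac{S_k\phi(x)}{S_k\psi(x)}\cdot\frac{S_k\psi(y)-S_k\psi(x)}{S_k\psi(y)}.
\]
The first summand is bounded by $C/(k\eta)$, and the second, using the uniform bound on $|S_k\phi|/S_k\psi$, by $\bigl(|\alpha|+\epsilon+C'/(k\eta)\bigr)\cdot C/(k\eta)$. Both are $O(1/k)$ uniformly over cylinders and points, so I can pick $K$ depending only on $\alpha,\epsilon,\eta,C$ such that for $k\geq K$ their sum is at most $\epsilon$, which is the desired inequality (and, by interchanging $x$ and $y$, its two-sided version).
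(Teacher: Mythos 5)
Your proof is correct, and in fact it is \emph{more} careful than the paper's own argument on exactly the point you flag. The paper's proof consists of writing the sandwich
\[
\frac{S_k\phi(x)-C}{S_k\psi(x)+C}\leq\frac{S_k\phi(y)}{S_k\psi(y)}\leq\frac{S_k\phi(x)+C}{S_k\psi(x)-C}
\]
followed by the remark that $S_k\psi(x)\geq k\eta$ and the assertion that ``the result now immediately follows.''  But expanding the sandwich gives
\[
\frac{S_k\phi(x)}{S_k\psi(x)}-\frac{S_k\phi(x)-C}{S_k\psi(x)+C}=\frac{C\bigl(S_k\phi(x)+S_k\psi(x)\bigr)}{S_k\psi(x)\bigl(S_k\psi(x)+C\bigr)},
\]
which is $O(1/k)$ only if the quotient $S_k\phi(x)/S_k\psi(x)$ is controlled; since $\phi\in\mathcal{R}$ is merely bounded below, this control is not automatic over arbitrary cylinders. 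You have identified this correctly and supplied the missing ingredient: in the covering argument the lemma is only invoked on cylinders in $\mathcal{C}_k$, which by definition intersect $J(\alpha,N,\epsilon)$, and the point $z$ in that intersection yields the uniform bound $|S_k\phi(x)|\leq(|\alpha|+\epsilon)S_k\psi(x)+C'$ via the same bounded-distortion estimate. Your algebraic decomposition of the difference of quotients then closes the argument cleanly. So the two proofs share the same two ingredients (summable variations giving $|S_k\phi(x)-S_k\phi(y)|\leq C$, $|S_k\psi(x)-S_k\psi(y)|\leq C$, and the linear lower bound $S_k\psi\geq k\eta$), but yours makes explicit the third ingredient---control of $S_k\phi/S_k\psi$ on the relevant cylinders---that the paper's terse ``immediately follows'' elides. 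One small caveat: with your repair, the lemma as literally stated (for \emph{all} cylinders $I(i_1,\ldots,i_k)$) should really be restricted to cylinders in $\mathcal{C}_k$; this is harmless since that is the only place it is used, but it is worth noting that the unrestricted statement is not what your argument (or, as far as one can tell, any argument) proves.
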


\begin{proof}
By the assumption that both $\psi$ and $\phi$ are of summable variations we know that for all $k\in\N$.
$$\frac{S_k\phi(x)-C}{S_k\psi(x)+C}\leq\frac{S_k\phi(y)}{S_k\psi(y)}\leq\frac{S_k\phi(x)+C}{S_k\psi(x)-C}.$$
However, we have by assumption that $S_k\psi(x)\geq k\eta$ for all $x\in\Lambda$. The result now immediately follows.
\end{proof}
We can now construct the measures we need.

\begin{lema}
If $t\in\R$ satisfies that
$$\sum_{I(i_1,\ldots,i_k)\in \mathcal{C}_k}|I(i_1,\ldots,i_k)|^t>1$$
then there exists a $T$-invariant probability measure $\mu_k$ such that
$$\frac{\int\phi\text{d}{\mu_k}}{\int\psi\text{d}\mu_k}\in (\alpha-2\epsilon,\alpha+2\epsilon)$$
and
$$\frac{h(\mu_k)}{\lambda(\mu_k)}\geq t-A(k)$$
where $A(k)\rightarrow 0$ as $k\rightarrow\infty$.
\end{lema}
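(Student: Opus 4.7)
My plan is to construct $\mu_k$ as the $T$-invariant lift of a Bernoulli measure supported on a finite sub-family of cylinders in $\mathcal{C}_k$. First I would choose a finite sub-collection $\mathcal{F}\subset\mathcal{C}_k$ with $Z:=\sum_{\bar{i}\in\mathcal{F}}|I(\bar{i})|^t\in (1,2]$; this is achievable because either the full sum is finite, in which case one adds cylinders one at a time and stops as soon as the partial sum exceeds $1$ (the overshoot being bounded by the largest individual term), or the full sum is infinite, in which case one truncates similarly. Setting $p_{\bar{i}}=Z^{-1}|I(\bar{i})|^t$, I would take the Bernoulli product measure on $\mathcal{F}^{\N}$, push it forward via $\pi$ to obtain a $T^k$-invariant probability measure $\tilde{\mu}_k$ on $\Lambda$, and set $\mu_k:=\frac{1}{k}\sum_{j=0}^{k-1}(T^j)_{\ast}\tilde{\mu}_k$, which is $T$-invariant. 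Since the support of $\mu_k$ is a finite-alphabet subshift on which $\psi$ and $\log|T'|$ are bounded, $\mu_k\in\tilde{\mathcal{M}}(T)$.

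The next step is to compute the entropy-to-Lyapunov ratio. The Kolmogorov--Sinai entropy of a Bernoulli measure is explicit, giving
\[
h(T,\mu_k)=\tfrac{1}{k}h(T^k,\tilde{\mu}_k)=-\tfrac{1}{k}\sum_{\bar{i}\in\mathcal{F}}p_{\bar{i}}\log p_{\bar{i}}=\tfrac{1}{k}\Big(\log Z-t\sum_{\bar{i}\in\mathcal{F}}p_{\bar{i}}\log|I(\bar{i})|\Big).
\]
The Renyi condition provides bounded distortion: for $x\in I(\bar{i})$ one has $|(T^k)'(x)|^{-1}\asymp|I(\bar{i})|$ with a multiplicative constant depending only on $K$, so
\[
\lambda(\mu_k)=\tfrac{1}{k}\int S_k\log|T'|\, d\tilde{\mu}_k=\tfrac{1}{k}\Big(-\sum_{\bar{i}\in\mathcal{F}}p_{\bar{i}}\log|I(\bar{i})|+O(1)\Big).
\]
Writing $L_k:=-\sum_{\bar{i}\in\mathcal{F}} p_{\bar{i}}\log|I(\bar{i})|$ and using uniform expansion $L_k\geq k\log\xi-O(1)\to\infty$, the ratio becomes
\[
\frac{h(\mu_k)}{\lambda(\mu_k)}=\frac{tL_k+\log Z}{L_k+O(1)}=t+\frac{\log Z-t\cdot O(1)}{L_k+O(1)}\geq t-A(k),
\]
with $A(k)=O(1/k)\to 0$ because $\log Z\in(0,\log 2]$ and $t$ lies in a bounded range (one only needs the conclusion for $t\leq\dim\Lambda$).

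Finally, I would verify the Birkhoff-ratio constraint. For each $\bar{i}\in\mathcal{F}$ pick $x_{\bar{i}}\in I(\bar{i})\cap J(\alpha,N,\epsilon)$, non-empty by the definition of $\mathcal{C}_k$ together with $k\geq N$; then $(\alpha-\epsilon)S_k\psi(x_{\bar{i}})<S_k\phi(x_{\bar{i}})<(\alpha+\epsilon)S_k\psi(x_{\bar{i}})$. Summable variation of $\phi$ and $\psi$ gives $|S_k\phi(y)-S_k\phi(x_{\bar{i}})|\leq C$ for every $y\in I(\bar{i})$ and likewise for $\psi$, hence
\[
\int\phi\, d\mu_k=\tfrac{1}{k}\sum_{\bar{i}\in\mathcal{F}}p_{\bar{i}}S_k\phi(x_{\bar{i}})+O(1/k),\qquad \int\psi\, d\mu_k=\tfrac{1}{k}\sum_{\bar{i}\in\mathcal{F}}p_{\bar{i}}S_k\psi(x_{\bar{i}})+O(1/k),
\]
with the second main term at least $\eta$ by Definition~\ref{pot}. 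The main terms produce a quotient in $(\alpha-\epsilon,\alpha+\epsilon)$, and the $O(1/k)$ perturbation places the full ratio inside $(\alpha-2\epsilon,\alpha+2\epsilon)$ once $k$ is large. The principal obstacle is the countable-state nature of $\mathcal{C}_k$, which prevents using all cylinders at once to define a measure with finite entropy and Lyapunov exponent; truncating to a finite $\mathcal{F}$ with $Z\in(1,2]$ is precisely what enables both $\mu_k\in\tilde{\mathcal{M}}(T)$ and the uniform control of $A(k)$.
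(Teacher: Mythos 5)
Your proposal is correct and follows essentially the same route as the paper's proof: truncate $\mathcal{C}_k$ to a finite sub-family, put a Bernoulli measure on the corresponding $T^k$-subshift with weights proportional to $|I(\bar{i})|^t$, average over $T$-iterates, and then use bounded distortion (Renyi) plus the summable-variations estimate (the content of Lemma \ref{simpest}) to control $h/\lambda$ and the Birkhoff ratio. The only cosmetic difference is that you pin down $Z\in(1,2]$; the paper only needs $Z_k>1$, since then $\log Z_k\geq 0$ only improves the lower bound on the entropy (your observation that $t$ must be kept in a bounded range, e.g.\ $t\le\dim\Lambda$, is nonetheless a valid point that the paper leaves implicit).
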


\begin{proof}
By the assumptions in our theorem it is possible to find a finite set $D_k\subset \mathcal{C}_k$ such that
$$\sum_{I(i_1,\ldots,i_k)\in D_k}|I(i_1,\ldots,i_k)|^{t}=Z_k>1.$$
We can now construct a $T^k$ invariant Bernoulli measure,   $\overline{\nu_k}$ by assigning each cylinder in $D_k$ weight $\frac{1}{|D_k|}|I(i_1,\ldots,i_k)|^t$. We can now estimate
\begin{eqnarray*}
\frac{h(\overline{\nu_k},T^k)}{\lambda(\overline{\nu_k},T^k)}&=&\frac{-\sum_{I(i_1,\ldots,i_k)\in D_k} \frac{1}{|D_k|}|I(i_1,\ldots,i_k)|^t\log (\frac{1}{|D_k|}|I(i_1,\ldots,i_k)|^t)}{\lambda(\overline{\nu_k},T^k)}\\
&=&\frac{-\sum_{I(i_1,\ldots,i_k)\in D_k} \frac{1}{|D_k|}|I(i_1,\ldots,i_k)|^t\log |I(i_1,\ldots,i_k)|^t}{\lambda(\overline{\nu_k},T^k)}+\frac{Z_k \log|D_k|}{|D_k|\lambda(\overline{\nu_k},T^k)}\\
&\geq&\frac{t\lambda(\overline{\nu_k},T^k)-C}{\lambda(\overline{\nu_k},T^k)}\geq t-\frac{C}{k\log\xi}.
\end{eqnarray*}
By Lemma \ref{simpest} we have that for $k$ sufficiently large $\frac{\int S_k\phi\text{d}\overline{\nu_k}}{\int S_k\psi\text{d}\overline{\nu_k}}\in (\alpha-2\epsilon,\alpha+2\epsilon)$. To complete the proof we let $\overline{\mu_k}=\frac{1}{k}\sum_{i=0}^{k-1}\overline{\nu_k}\circ T^{-i}$ and note that $\frac{C}{k\log\xi}\rightarrow 0$ as $k\rightarrow\infty$.
\end{proof}
It now follows that for any $\delta>0$ we can find a sequence of $T$-invariant measures $(\overline{\mu_k})_k$ such that $\limsup_{k\rightarrow\infty} \left|t_k-\frac{h(\overline{\mu_k})}{\lambda(\overline{\mu_k})}\right|\leq\delta$ and where
$\frac{\int\phi\text{d}\overline{\mu_k}}{\int\psi\text{d}\overline{\mu_k}}\in (\alpha-2\epsilon,\alpha+2\epsilon)$. Hence, for all $\epsilon>0$ we have
$$\dim J(\alpha)\leq\sup_{\mu\in \tilde{\mathcal{M}}(T)}\left\{\frac{h(\mu)}{\lambda(\mu)} : \frac{\int\phi\text{d}\mu}{\int\psi\text{d}\mu}\in (\alpha-\epsilon,\alpha+\epsilon)\right\}.$$
To complete the proof of the upper bound for $\alpha\in U$ we simply apply Lemma \ref{continuity}.

\section{Proof of the lower bound of Theorem \ref{main}}
Our method to prove the lower bound is to find  an ergodic measure $\mu$ with $\frac{\int\phi\text{d}\mu}{\int\psi\text{d}\mu}=\alpha$, or a sequence of ergodic measures $\mu_n$ with
$\lim_{n\to\infty}\frac{\int\phi\text{d}\mu_n}{\int\psi\text{d}\mu_n}=\alpha$, and then use the fact that the dimension of ergodic measures with finite entropy is $\frac{h(\mu)}{\lambda(\mu)}$, see  \cite[Thereom 4.4.2]{mubook}.
For $\alpha\in U$ we will use the thermodynamic formalism to show that there is an ergodic equilibrium measure with dimension $\delta(\alpha)$, which will show that $\dim J(\alpha)\geq\delta(\alpha)$. For $\alpha\notin U$ we will need to work slightly harder.

We define the function $G_1:\R^3\to\R\cup\{\infty\}$ by
$$G_1(\alpha,q,\delta)=P(q(\phi-\alpha\psi)-\delta\log |T'|).$$
Note that $G_1$ can be infinite and we will adopt the convention $\infty>0$.
We have the following  lemma.

\begin{lema}\label{pospres}
Let $\alpha\in (\alpha_m,\alpha_M)$ and $\delta>0$. If for all $q\in\R$ we have that $G_1(\alpha,q,\delta)> 0$ then $\dim J(\alpha)>\delta$.
\end{lema}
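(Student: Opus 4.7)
The plan is to produce, via thermodynamic formalism on the compact invariant subsystems $\Lambda_n$, an ergodic measure $\mu$ with $\int\phi\,d\mu/\int\psi\,d\mu = \alpha$ and $h(\mu)/\lambda(\mu) > \delta$. The conclusion then follows from Birkhoff's theorem, which yields $\mu(J(\alpha)) = 1$, combined with the dimension formula for ergodic measures \cite[Theorem 4.4.2]{mubook}.

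For each $n$, set $f_n(q) = P_{T|\Lambda_n}(q(\phi - \alpha\psi) - \delta\log|T'|)$; by Proposition~\ref{tf_maps}(3) this function is analytic and convex in $q$, with a unique ergodic equilibrium measure $\mu_{n,q}$ for every $q$. Since $\alpha_m < \alpha < \alpha_M$, by approximating orbits whose Birkhoff ratios lie on either side of $\alpha$ by periodic orbits (standard for full shifts) I obtain, for some $n_0$, two periodic invariant measures $\mu_-,\mu_+$ supported on $\Lambda_{n_0}$ with $\int(\phi - \alpha\psi)\,d\mu_- < 0 < \int(\phi - \alpha\psi)\,d\mu_+$. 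Inserting these two measures into the variational principle shows $f_n(q) \to +\infty$ as $|q|\to\infty$ for every $n \geq n_0$, and the same bound shows $G_1(\alpha,\cdot,\delta)$ is coercive; hence each $f_n$ and $G_1(\alpha,\cdot,\delta)$ attains a finite minimum.

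Let $q_n^*$ minimise $f_n$ and put $\mu_n = \mu_{n,q_n^*}$. Differentiating the pressure in the parameter $q$ gives $f_n'(q) = \int(\phi - \alpha\psi)\,d\mu_{n,q}$, so $f_n'(q_n^*) = 0$ reads $\int(\phi - \alpha\psi)\,d\mu_n = 0$, i.e.\ $\int\phi\,d\mu_n/\int\psi\,d\mu_n = \alpha$, and
$$f_n(q_n^*) = h(\mu_n) - \delta\lambda(\mu_n).$$
The decisive step is then to show $f_n(q_n^*) > 0$ for some $n$. By Proposition~\ref{tf_maps}(2), $f_n \uparrow G_1(\alpha,\cdot,\delta)$ pointwise. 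Convexity of $f_n$ gives $f_n(q) \geq \min f_n$ for every $q$; passing to the limit yields $G_1(\alpha,q,\delta) \geq \lim_n \min f_n$ for every $q$, whence $\min G_1(\alpha,\cdot,\delta) \geq \lim_n \min f_n$. The reverse inequality follows from $f_n \leq G_1(\alpha,\cdot,\delta)$ evaluated at a minimiser of the right-hand side. Therefore $\min f_n \uparrow \min G_1(\alpha,\cdot,\delta)$, and this common limit is strictly positive because $G_1(\alpha,\cdot,\delta)$ attains its minimum and is positive everywhere by hypothesis.

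For any such $n$, $\mu_n$ is ergodic (being the unique equilibrium measure of a locally H\"older potential on a compact full shift), $\mu_n$-a.e.\ point lies in $J(\alpha)$ by Birkhoff's theorem, and $\dim\mu_n = h(\mu_n)/\lambda(\mu_n) > \delta$, giving $\dim J(\alpha) \geq \dim\mu_n > \delta$. The main obstacle I expect is the Dini-type passage $\min f_n \to \min G_1(\alpha,\cdot,\delta)$; although ultimately a soft convexity argument, it rests on coercivity of $G_1(\alpha,\cdot,\delta)$, which in turn requires producing the periodic measures $\mu_\pm$ on a common $\Lambda_{n_0}$ with opposite signs of $\int(\phi - \alpha\psi)\,d\mu_\pm$.
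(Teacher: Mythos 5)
Your overall strategy is the same as the paper's: restrict to compact invariant subsystems $\Lambda_n$, locate the critical point of the analytic convex function $q\mapsto P_{T|\Lambda_n}(q(\phi-\alpha\psi)-\delta\log|T'|)$, take the corresponding ergodic equilibrium state $\mu_n$, observe that the vanishing derivative forces $\int\phi\,d\mu_n/\int\psi\,d\mu_n=\alpha$, and conclude via the dimension formula for ergodic measures. The coercivity argument via two measures on a single $\Lambda_{n_0}$ with $\int(\phi-\alpha\psi)\,d\mu$ of opposite signs is also what the paper does (phrased there through an ergodic-optimisation reference for the asymptotic slopes of the pressure).

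There is, however, a genuine flaw in the step you flag as the ``Dini-type passage.'' You claim $\min f_n\uparrow\min G_1$ by exhibiting two inequalities, but both of your inequalities run the same way. From $f_n(q)\geq\min f_n$ and $f_n\uparrow G_1$ you correctly obtain $G_1(q)\geq\lim_n\min f_n$ for all $q$, hence $\min G_1\geq\lim_n\min f_n$. Your ``reverse inequality'' is $\min f_n\leq f_n(q^*_G)\leq G_1(q^*_G)=\min G_1$, which again gives $\lim_n\min f_n\leq\min G_1$. What you actually need is the opposite bound $\lim_n\min f_n\geq\min G_1$, since otherwise $\lim_n\min f_n$ could be $0$ even though $\min G_1>0$, and the entire proof collapses. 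To close the gap, you must use the boundedness of the minimisers $q_n^*$: coercivity of $f_{n_0}$ together with $f_n(q_n^*)\leq\min G_1$ forces $q_n^*$ into a fixed compact interval; taking a limit point $q_\infty$, monotonicity in $n$ gives $\min f_n=f_n(q_n^*)\geq f_m(q_n^*)$ for $n\geq m$, letting $n\to\infty$ (using continuity of $f_m$) yields $\lim_n\min f_n\geq f_m(q_\infty)$, and then $m\to\infty$ gives $\lim_n\min f_n\geq G_1(q_\infty)\geq\min G_1>0$. You assemble all of the ingredients for this but never run the argument, instead asserting an equality that your displayed inequalities do not support.
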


\begin{proof}
Since $\alpha\in (\alpha_m,\alpha_M)$ it follows that
$$\lim_{q\to\infty}G_1(\alpha,q,\delta)=\lim_{q\to-\infty}G_1(\alpha,q,\delta)=\infty.$$
Indeed, this is a consequence of ergodic optimisation results that relate the asymptotic derivative of the pressure to maximising/minimising measures for the potential $\phi-\alpha\psi$ (see for instance \cite[Theorem 1]{JMU}). This means that if we let $q^-:=\inf\{q \in \R :G_1(\alpha,q,\delta)<\infty\}$ and $q^+:=\sup\{q  \in \R :G_1(\alpha,q,\delta)<\infty\}$ then either $\lim_{q\to q^{-}}G(\alpha,q,\delta)=\infty$ or $G(\alpha,q^{-},\delta)<\infty$ and similarly $\lim_{q\to q^{+}}G(\alpha,q,\delta)=\infty$ or $G(\alpha,q^{+},\delta)<\infty$. Thus if $\{q \in \R :G_1(\alpha,q,\delta)<\infty\}\neq\emptyset$ then, by convexity and our assumption,  $G_1(\alpha,q,\delta)$ has a minimum which must be greater than $0$. Therefore, by the approximation property of pressure  (see Proposition \ref{tf_maps}) we can find  $n\in\N$ such that the $T$-invariant compact set $\Lambda_n$ satisfies
$$P_{\Lambda_n}(q(\phi-\alpha\psi)-\delta\log|T'|)>0$$
for all $q\in\R$ and $$\lim_{q\to-\infty}P_{\Lambda_n}(q(\phi-\alpha\psi)-\delta\log|T'|)=\lim_{q\to\infty}P_{\lambda}(q(\phi-\alpha\psi)-\delta\log|T'|)=\infty.$$
Moreover,  the function $q\to P_{\Lambda_n}(q(\phi-\alpha\psi)-\delta\log|T'|)$ is real analytic and strictly convex (see \cite{su, sar}). Thus,  there exists  a unique point $q_c \in \R$ such that
$$\frac{\partial }{\partial q}P_{\Lambda_n}(q,\alpha,\delta) \Big|_{q=q_c}=0.$$
Denote by  $\mu_c$  the unique equilibrium state for the potential $q_c(\phi-\alpha\psi)-\delta\log|T'|$ restricted to $\Lambda_n$. Note that such a measure exists because the space $\Lambda_n$ is compact  and the potential H\"older. Then $\frac{\int\phi \text{d}\mu_c}{\int\psi\text{d}\mu_c}=\alpha$ and so
$$h(\mu_c)-\delta\lambda(\mu_c)>0.$$
 Since $\mu_c$ is ergodic we have that $\mu_c(J(\alpha))=1$ and $\dim\mu_c=\frac{h(\mu_c)}{\lambda(\mu_c)}\geq\delta$. This completes the proof.
\end{proof}
The following lemma now completes the proof of the lower bound for the case when $\alpha\in U$.

\begin{lema} \label{lem:4.2}
If $\alpha\in U$   then for all $q\in\R$ and any $\epsilon>0$ we have that
$$P(q(\psi-\alpha\phi)-(\delta(\alpha)-\epsilon)\log |T'|)>0.$$
\end{lema}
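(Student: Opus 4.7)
The plan is to apply the variational principle for the pressure to a measure that nearly realizes $\delta(\alpha)$. (I read the conclusion with $q(\phi-\alpha\psi)$ rather than the literal $q(\psi-\alpha\phi)$: the latter cannot be positive for every $q\in\R$ unless $\alpha=\pm 1$, whereas the former matches the function $G_1$ used in Lemma \ref{pospres}.) Fix $q\in\R$ and $\epsilon>0$. Unfolding the supremum defining $\delta(\alpha)$, choose $\mu\in\tilde{\mathcal{M}}(T)$ with
\[\frac{\int\phi\,d\mu}{\int\psi\,d\mu}=\alpha\qquad\text{and}\qquad\frac{h(\mu)}{\lambda(\mu)}>\delta(\alpha)-\tfrac{\epsilon}{2}.\]

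Because $\alpha\in U$, Lemma \ref{bounded} bounds $\int\psi\,d\mu\leq C_1$, hence $\int\phi\,d\mu=\alpha\int\psi\,d\mu$ is also finite, and the uniform expansion condition forces $\lambda(\mu)\geq\log\xi>0$. So the integrand $q(\phi-\alpha\psi)-(\delta(\alpha)-\epsilon)\log|T'|$ is $\mu$-integrable and the variational principle gives
\[P\bigl(q(\phi-\alpha\psi)-(\delta(\alpha)-\epsilon)\log|T'|\bigr)\geq h(\mu)+q\int(\phi-\alpha\psi)\,d\mu-(\delta(\alpha)-\epsilon)\lambda(\mu).\]
The ratio condition on $\mu$ makes the middle term vanish, so the right-hand side equals
\[\lambda(\mu)\left(\frac{h(\mu)}{\lambda(\mu)}-\delta(\alpha)+\epsilon\right)\geq\lambda(\mu)\cdot\tfrac{\epsilon}{2}>0,\]
which is the desired strict positivity, uniformly in $q$.

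The only genuine obstacle is confirming that a measure $\mu$ with ratio \emph{exactly} $\alpha$ and entropy ratio arbitrarily close to $\delta(\alpha)$ exists (not merely a maximizing sequence whose ratios tend to $\alpha$). This is handled by the convex-combination trick already used in the proof of Lemma \ref{continuity}: starting from two measures $\mu_1,\mu_2\in\tilde{\mathcal{M}}(T)$ with $\int\phi\,d\mu_i/\int\psi\,d\mu_i$ straddling $\alpha$, together with a sequence $(\nu_n)$ realizing the supremum, the combinations $p\nu_n+(1-p)\mu_i$ hit ratio $\alpha$ exactly for a suitable $p\in(0,1)$ while retaining $h/\lambda$ close to $\delta(\alpha)$. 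The finiteness of $\lambda(\mu)$ (again thanks to Lemma \ref{bounded} and the standard fact that $\lambda$ is finite for EMR measures with integrable $\log|T'|$) is what makes $\lambda(\mu)\cdot\epsilon/2$ genuinely positive, closing the argument.
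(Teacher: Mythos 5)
Your proof is correct and follows essentially the same route as the paper: choose $\mu\in\tilde{\mathcal{M}}(T)$ with $\int\phi\,d\mu/\int\psi\,d\mu=\alpha$ exactly and $h(\mu)/\lambda(\mu)$ within $\epsilon$ of $\delta(\alpha)$, then apply the variational principle so the $q$-term cancels and the remaining term $h(\mu)-(\delta(\alpha)-\epsilon)\lambda(\mu)$ is strictly positive. You also correctly spot that the displayed statement contains a typo ($q(\psi-\alpha\phi)$ should read $q(\phi-\alpha\psi)$, matching $G_1$), and your extra paragraph justifying the existence of a measure with ratio exactly $\alpha$ is a harmless elaboration of what the paper leaves implicit.
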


\begin{proof}
By the definition of $\delta(\alpha)$ there exists a $T$-invariant measure $\mu$ such that $\frac{\int\phi\text{d}\mu}{\int\psi\text{d}\mu}=\alpha$ and $\frac{h(\mu)}{\lambda(\mu)}>\delta(\alpha)-\epsilon$. Thus, for any $q\in\R$, the variational principle yields
\begin{eqnarray*}
P(q(\phi-\alpha\psi)-(\delta(\alpha)-\epsilon)\log |T'|)&\geq&\\ q \left(\int\phi\text{d}\mu-\alpha\int\psi\text{d}\mu \right)-(\delta(\alpha)-\epsilon)\lambda(\mu)+h(\mu)
&=&\\-(\delta(\alpha)-\epsilon)\lambda(\mu)+h(\mu)>0.
\end{eqnarray*}
\end{proof}

For $\alpha\notin U$ we cannot use the argument in Lemma \ref{lem:4.2}. Instead, we need to use a sequence of measures. Fix $\alpha\notin U$ and let
$$s=\lim_{\epsilon\to 0}\sup_{\mu \in \tilde{\mathcal{M}}_T}\left\{\frac{h(\mu)}{\lambda(\mu)}:\left|\frac{\int\phi\text{d}\mu}{\int\psi\text{d}\mu}-\alpha\right|\leq \epsilon\right\}.$$
\begin{lema}\label{seqermeas}
There exists a sequence of $T$-ergodic measures $(\mu_n)_n$ such that,
$\lim_{n\to\infty}\frac{h(\mu_n)}{\lambda(\mu_n)}=s$ and $\lim_{n\to\infty}\frac{\int \phi\text{d}\mu_n}{\int\psi\text{d}\mu_n}=\alpha$.
\end{lema}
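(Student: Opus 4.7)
The plan is to produce ergodic approximants by first invoking the definition of $s$ to obtain (possibly non-ergodic) invariant measures on the full repeller, then passing to compact subsystems via the approximation property of pressure, and finally applying the two-parameter pressure argument from Lemma~\ref{pospres} on each compact subsystem to extract ergodic equilibrium measures with the right ratios.

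Concretely, for each $n\geq 1$ I first pick a measure $\nu_n\in\tilde{\mathcal{M}}(T)$ realising the definition of $s$ up to error $1/n$: $R(\nu_n):=h(\nu_n)/\lambda(\nu_n)\geq s-1/n$ and $|g(\nu_n)-\alpha|\leq 1/n$, where $g(\nu):=\int\phi\,d\nu/\int\psi\,d\nu$. Using the approximation property of pressure (Proposition~\ref{tf_maps}(2)) together with the integrability conditions built into $\tilde{\mathcal{M}}(T)$, I replace $\nu_n$ by an invariant measure $\nu_n'$ supported on a compact subsystem $\Lambda_{N_n}$ whose four quantities $h$, $\lambda$, $\int\phi\,d\mu$ and $\int\psi\,d\mu$ are all within $1/n$ of those of $\nu_n$. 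Writing $\alpha_n:=g(\nu_n')$, this gives $\alpha_n\to\alpha$ and $R(\nu_n')\to s$. Next, on the compact subshift of finite type $\Lambda_{N_n}$, I consider the two-parameter pressure
\[F_n(q,\delta):=P_{\Lambda_{N_n}}\bigl(q(\phi-\alpha_n\psi)-\delta\log|T'|\bigr),\]
which is real analytic and convex, and for every $(q,\delta)$ admits a unique ergodic equilibrium measure. Since $\nu_n'$ lies in the nonempty constraint set $\{\mu\in\mathcal{M}(\Lambda_{N_n}):g(\mu)=\alpha_n\}$, the quantity $\delta_n^*:=\sup\{R(\mu):\mu\in\mathcal{M}(\Lambda_{N_n}),\,g(\mu)=\alpha_n\}$ is at least $R(\nu_n')$. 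Arguing exactly as in Lemma~\ref{pospres}, this supremum is realised by the unique ergodic equilibrium measure $\mu_n$ at the point $(q_n^*,\delta_n^*)$ where $F_n$ vanishes and $\partial_q F_n=0$; at this point $g(\mu_n)=\alpha_n$ from the vanishing $q$-derivative, and $R(\mu_n)=\delta_n^*$. A sandwich estimate, $R(\nu_n')\leq\delta_n^*\leq\sup\{R(\mu):|g(\mu)-\alpha|\leq 2/n\}\to s$, then forces $R(\mu_n)\to s$, while $g(\mu_n)=\alpha_n\to\alpha$ by construction.

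The main obstacle is the reduction to compact subsystems: given an arbitrary measure in $\tilde{\mathcal{M}}(T)$ on the noncompact repeller, producing one on some $\Lambda_{N}$ that simultaneously approximates all four of $h$, $\lambda$, $\int\phi\,d\mu$ and $\int\psi\,d\mu$ is what makes the ergodic-theoretic machinery on $\Lambda_{N_n}$ usable. This relies crucially on the integrability built into the definition of $\tilde{\mathcal{M}}(T)$ and on the approximation property of pressure applied to the combined potential $q(\phi-\alpha\psi)-\delta\log|T'|$; once this is in place, the rest of the argument is a verbatim application of the two-parameter pressure argument already used in Lemma~\ref{pospres}.
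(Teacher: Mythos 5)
Your approach is genuinely different from the paper's, and it contains a gap at its central step. The paper's proof is a short, direct construction: given any $T$-invariant measure $\mu$ with $\max\{h(\mu),\int\psi\,d\mu\}<\infty$, it forms the $T^n$-Bernoulli measure $\eta_n$ that agrees with $\mu$ on every $n$-cylinder and sets $\nu_n=\frac{1}{n}\sum_{i=0}^{n-1}\eta_n\circ T^{-i}$; this $\nu_n$ is automatically $T$-ergodic (any $T$-invariant set is $T^n$-invariant, hence has $\eta_n$-measure $0$ or $1$), and as $n\to\infty$ the quantities $h(\nu_n)/\lambda(\nu_n)$ and $\int\phi\,d\nu_n/\int\psi\,d\nu_n$ converge to those of $\mu$. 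Applying this to a sequence of invariant measures realising the definition of $s$ and diagonalising finishes the proof. Crucially, these $\eta_n$ need not be compactly supported; no passage to $\Lambda_N$ is required.

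The gap in your argument is the claim that ``using the approximation property of pressure (Proposition~\ref{tf_maps}(2)) together with the integrability conditions built into $\tilde{\mathcal{M}}(T)$'' one can replace an arbitrary $\nu_n\in\tilde{\mathcal{M}}(T)$ by a measure $\nu_n'$ on a compact subsystem $\Lambda_{N_n}$ with $h$, $\lambda$, $\int\phi$, $\int\psi$ all simultaneously within $1/n$. The approximation property is a statement about the \emph{pressure functional}, not about arbitrary given measures: it guarantees that near-optimal measures for a fixed potential live on compact pieces, but it says nothing about approximating a prescribed $\nu_n$ in entropy and in three integrals at once. Establishing that kind of measure approximation on a noncompact countable full shift requires a separate argument (for instance a truncated Bernoulli construction), and once you carry out such a construction you have already produced ergodic approximants, at which point the two-parameter pressure machinery of Lemma~\ref{pospres} is superfluous. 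If you want to keep the pressure route, the cleaner fix is to drop $\nu_n'$ entirely: from $\nu_n$ with $g(\nu_n)=\alpha_n$ and $R(\nu_n)\geq s-1/n$, one has $G_1(\alpha_n,q,s-2/n)\geq(R(\nu_n)-(s-2/n))\lambda(\nu_n)>0$ for all $q$, so Lemma~\ref{pospres} itself (whose proof already passes to a compact $\Lambda_N$ via the approximation property) produces an ergodic equilibrium measure $\mu_n$ with $g(\mu_n)=\alpha_n$ and $R(\mu_n)>s-2/n$, and your sandwich estimate then gives $R(\mu_n)\to s$. Even then, the paper's Bernoulli construction is the more elementary and more robust argument, since it avoids needing $\alpha_n$ to be in the interior of the range of $g$ on the chosen compact piece, a point your turning-point argument quietly assumes.
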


\begin{proof}
It follows from the definition of $s$ that we can find a sequence of invariant measures with this property. In order to construct a sequence of ergodic measures with the desired property we proceed as follows.  Let $\mu$ be an invariant measure with $\max\{h(\mu),\int \psi\text{d}\mu \}<\infty$. For any $n\in\N$ we can define a $T^n$-ergodic Bernoulli measure $\eta_n$ with $\eta_n([i_1,\ldots,i_n])=\mu([i_1,\ldots,i_n])$ for each $n\in\N$. If we let $\nu_n=\frac{1}{n}\sum_{i=0}^{n-1}\eta_n\circ T^{-1}$ then $\nu_n$ is a $T$-ergodic measure. Moreover $\lim_{n\to\infty}\frac{h(\nu_n)}{\lambda(\nu_n)}=\frac{h(\mu)}{\lambda(\mu)}$ and $\lim_{n\to\infty}\frac{\int \phi\text{d}\nu_n}{\int\psi\text{d}\nu_n}=\frac{\int \phi\text{d}\mu}{\int\psi\text{d}\mu}$.
\end{proof}

Thus we have a sequence of $T$-invariant ergodic measures $(\mu_n)_n$
 such that
 \[ \lim_{n\to\infty}\frac{h(\mu_n)}{\lambda(\mu_n)}=s \text{ and } \lim_{n\to\infty}\frac{\int\phi\text{d}\mu_n}{\int\psi\text{d}\mu_n}=\alpha.\]
 If $\limsup_{n\to\infty}\int\psi\text{d}\mu_n<\infty $ then we can adapt the method in Lemma \ref{continuity} to find a sequence of $T$-invariant measures $(\nu_n)_n$ with $\lim_{n\to\infty}\frac{h(\nu_n)}{\lambda(\nu_n)}=s$ and $\frac{\int\phi\text{d}\nu_n}{\int\psi\text{d}\nu_n}=\alpha$. We can then proceed as in the case when $\alpha\in U$. On the other hand, if $\limsup_{n\to\infty}\int\psi\text{d}\mu_n=\infty$ we can adapt the technique in  \cite[Section 7]{ij} which is in turn based on the method in Gelfert and Rams, \cite{GR} to prove the following proposition to complete the proof.

\begin{prop}\label{wmeasure}
If there exists a sequence of $T$-ergodic measures $(\mu_n)_n$ such that $\lim_{n\to\infty}\frac{h(\mu_n)}{\lambda(\mu_n)}=s$, $\frac{\int\phi\text{d}\mu_n}{\int\psi\text{d}\mu_n}=\alpha$ and $\lim_{n\to\infty}\int\psi\text{d}\mu_n=\infty$ then there exists a measure $\nu$, such that $\dim\nu=s$ and $\lim_{n\to\infty}\frac{S_n\phi(x)}{S_n\psi(x)}=\alpha$ for $\nu$-almost all $x$.  
\end{prop}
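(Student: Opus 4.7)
The plan is to build, by a Moran-type construction in the symbolic space $\Sigma$, a Cantor subset $F \subset J(\alpha)$ together with a naturally supported probability measure $\nu$, adapting the technique of \cite[Section 7]{ij} which is in turn based on Gelfert--Rams \cite{GR}. The first step is to approximate: using the approximation property in Proposition \ref{tf_maps}(2), I would replace each ergodic $\mu_n$ by an ergodic measure supported on some finite subsystem $\Lambda_{N_n}$, retaining $\frac{\int\phi\,d\mu_n}{\int\psi\,d\mu_n}$ close to $\alpha$, $\frac{h(\mu_n)}{\lambda(\mu_n)}$ close to $s$, and $\int\psi\,d\mu_n$ as large as desired. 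This reduces the problem to measures on finitely generated subsystems where the Birkhoff and Shannon--McMillan--Breiman theorems apply uniformly.

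Next, for each $n$ I would apply the ergodic theorem and Shannon--McMillan--Breiman to choose a length $k_n$ and a large finite collection $\mathcal{D}_n$ of length-$k_n$ cylinders in $\Sigma_{N_n}$ such that, for some $\epsilon_n \downarrow 0$, on each $I \in \mathcal{D}_n$ and every $x \in I$: (i) the Birkhoff sums $S_{k_n}\phi(x)$ and $S_{k_n}\psi(x)$ differ from $k_n\int\phi\,d\mu_n$ and $k_n\int\psi\,d\mu_n$ by at most $\epsilon_n k_n\int\psi\,d\mu_n$; (ii) $-\log|I| = k_n\lambda(\mu_n)(1+O(\epsilon_n))$; and (iii) $|\mathcal{D}_n| \geq \exp(k_n h(\mu_n)(1 - O(\epsilon_n)))$. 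The summable-variations hypothesis on $\phi$, $\psi$, and $\log|T'|$, together with the Renyi property for bounded distortion, transfers (i)--(ii) from $\mu_n$-generic points to all points of each cylinder.

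I would then build a tree by choosing rapidly growing integers $M_n$ and concatenating: a level-$n$ node consists of one word from $\mathcal{D}_1$, followed by $M_2$ words from $\mathcal{D}_2$, $\ldots$, followed by $M_n$ words from $\mathcal{D}_n$. The limit set $F$ is a Cantor subset of $\Lambda$, and I equip it with the Bernoulli-type measure $\nu$ giving equal weight to each child at every branching. Standard estimates give that the $\nu$-measure of a level-$n$ cylinder is comparable to its diameter raised to the power $s_n := h(\mu_n)/\lambda(\mu_n)$, and a Frostman mass-distribution argument combined with the exponential gap between successive Moran scales yields $\dim\nu \geq \lim_n s_n = s$. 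The reverse inequality follows once the Birkhoff ratio property has been verified, since then $\nu(J(\alpha)) = 1$ and the upper bound from Section 3 gives $\dim\nu \leq \dim J(\alpha) \leq s$.

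The main obstacle is verifying $S_m\phi(x)/S_m\psi(x) \to \alpha$ for $\nu$-a.e.\ $x$ at \emph{every} time $m$, not merely at Moran-level boundaries. Each full $\mathcal{D}_n$-block contributes sums in ratio $\alpha + O(\epsilon_n)$, and because $\int\psi\,d\mu_n \to \infty$, the $\psi$-mass of a single level-$n$ block can be made to dwarf the total accumulated $\psi$-sum of all earlier stages, provided $k_n$ and $M_n$ are chosen large enough in the inductive construction. The hypothesis $\psi \in \mathcal{R}_\eta$ guarantees $S_k\psi \geq k\eta$, so the denominator cannot collapse inside a partial block; hence after $\ell$ complete stage-$(n+1)$ blocks plus a partial one, the accumulated ratio lies within $\alpha \pm O(\epsilon_{n+1} + 1/\ell)$ of $\alpha$. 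Tuning $\epsilon_n \to 0$ and ensuring each stage's cumulative $\psi$-mass overwhelms all previous stages will then give the desired $\nu$-almost-sure convergence and complete the construction.
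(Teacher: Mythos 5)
Your proposal follows essentially the same route as the paper's proof: both are Gelfert--Rams--style Moran constructions (which you correctly cite via \cite[Section 7]{ij} and \cite{GR}) in which blocks generic for each $\mu_n$ are concatenated, the dimension is read off a mass-distribution estimate on the resulting product measure, and the Birkhoff ratio at intermediate times is controlled by exploiting $\lim_n\int\psi\,d\mu_n=\infty$ (so each new stage's $\psi$-mass dominates all earlier ones) together with $\psi\in\mathcal{R}_\eta$ to bound partial blocks. The only cosmetic differences are that the paper applies Egorov directly to the given ergodic $\mu_n$ to produce uniform good sets $J_n$ and uses the conditional measures, rather than passing first to finite subsystems and taking equal weights on a cylinder family $\mathcal{D}_n$ with explicit repetition counts $M_n$; neither of your extra reductions is necessary, but neither introduces a gap.
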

Note that the measure $\nu$ is not required to be invariant and in certain cases an invariant measure with the required property will not exist. 

\subsection*{Proof of Proposition \ref{wmeasure}}
Note that it suffices to proof this in the case where $\alpha_m>0$ since if this is not the case we can add a suitable constant multiple of $\psi$ to $\phi$ to ensure it is the case. We let
$$\alpha_n:=\frac{\int\phi\text{d}\mu_n}{\int\psi\text{d}\mu_n}\text{ and }s_n=\frac{h(\mu_n)}{\lambda(\mu_n)}.$$  
We may also assume that the sequence $s_n$ is monotone increasing. Finally we fix a sequences of positive real numbers $0<\delta_n$ such that $\prod_{n=1}^{\infty}(1-\delta_n)>0$ and fix $0<\epsilon<\inf_{n\in\N}\min\{\alpha_n,s_n,\int \psi_n\text{d}\mu_n\}$.
\begin{lema}
For each measure $\mu_n$ there exists a set $J_n$ and $j_n\in\N$ such that $\mu_n(J_n)>1-\delta_n$ and for all $x=\Pi(\underline{i})\in J_n$ and $j\geq j_n$ we have that
\begin{enumerate}
\item
For all $y\in I(i_1,\ldots i_j)$, $S_j\psi(y)\in \left(j\left(\int \psi\text{d}\mu_n-\epsilon/2^n\right)\right),j\left(\int\psi\text{d}\mu_n+\epsilon/2^n\right)$ and $S_j\phi(y)/S_j\psi(y)\in \left(\alpha_n-\epsilon/2^n,\alpha_n+\epsilon/2^n\right)$.
\item
$|I(i_1,\ldots,i_j)|\in (j(\lambda(\mu_n)-\epsilon/2^n),j(\lambda(\mu_n)+\epsilon/2^n))$ and $\frac{\log(\mu_n(I(i_1,\ldots,i_j)))}{\log|I(i_1,\ldots,i_j)|}\in ((s_n-\epsilon/2^n,s_n+\epsilon/2^n)$.
\end{enumerate}
\end{lema}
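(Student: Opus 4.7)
The plan is to combine the Birkhoff ergodic theorem, the Shannon--McMillan--Breiman theorem, and the bounded distortion supplied by the Renyi condition, and then extract uniform convergence on a large set via Egorov's theorem. Throughout we work with the ergodic measure $\mu_n$, for which $\phi$, $\psi$ and $\log|T'|$ are integrable (the finiteness of $\int\psi\,d\mu_n$ is a standing assumption, the finiteness of $\int\log|T'|\,d\mu_n$ is built into $\tilde{\mathcal M}(T)$, and $\int\phi\,d\mu_n$ is finite because $\phi$ is uniformly bounded below and $\phi\le (\alpha_n+1)\psi$ outside a bounded portion of phase space after adjusting by a constant multiple of $\psi$, exactly as in Lemma~\ref{bounded}).

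First I would define, for each point $x=\Pi(\underline i)$ in $\Lambda$ and each $j$, the quantities $a_j(x):=\tfrac1j S_j\psi(x)$, $b_j(x):=\tfrac1j S_j\phi(x)$, $\ell_j(x):=-\tfrac1j\log|I(i_1,\ldots,i_j)|$ and $m_j(x):=-\tfrac1j\log\mu_n(I(i_1,\ldots,i_j))$. By Birkhoff applied to $\psi$, $\phi$ and $\log|T'|$ we have $a_j(x)\to\int\psi\,d\mu_n$, $b_j(x)\to\int\phi\,d\mu_n$ and $\tfrac1j S_j\log|T'|(x)\to\lambda(\mu_n)$ for $\mu_n$-a.e.\ $x$. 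The Renyi condition gives the standard bounded distortion estimate
\[
\bigl|\log|I(i_1,\ldots,i_j)|+S_j\log|T'|(y)\bigr|\le K'
\]
for every $y\in I(i_1,\ldots,i_j)$, with $K'$ independent of $j$ and the cylinder, so $\ell_j(x)\to\lambda(\mu_n)$ $\mu_n$-a.e. The Shannon--McMillan--Breiman theorem applied to the generating Markov partition yields $m_j(x)\to h(\mu_n)$ $\mu_n$-a.e., and hence the ratio $m_j(x)/\ell_j(x)\to h(\mu_n)/\lambda(\mu_n)=s_n$ $\mu_n$-a.e.

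Next I would convert the pointwise bounds at $x$ into uniform bounds over the whole cylinder $I(i_1,\ldots,i_j)$. Because $\phi\circ\pi$ and $\psi\circ\pi$ have summable variations, the choice of constant $C$ from Section~3 yields $|S_j\psi(x)-S_j\psi(y)|\le C$ and $|S_j\phi(x)-S_j\phi(y)|\le C$ whenever $x,y\in I(i_1,\ldots,i_j)$; the analogous estimate for $\log|T'|$ gives the $\ell_j$ control uniformly on cylinders. For $j$ large this $O(1)$ error is absorbed into any fixed additive slack of order $j\epsilon/2^n$, and the ratio bound $S_j\phi(y)/S_j\psi(y)\in(\alpha_n-\epsilon/2^n,\alpha_n+\epsilon/2^n)$ follows from the two uniform bounds together with $\int\psi\,d\mu_n\ge\epsilon>0$, exactly as in Lemma~\ref{simpest}.

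Finally I would apply Egorov's theorem on the probability space $(\Lambda,\mu_n)$: the four sequences of measurable functions $a_j,b_j,\ell_j,m_j$ each converge $\mu_n$-a.e., so for any $\delta_n>0$ there is a measurable set $J_n$ with $\mu_n(J_n)>1-\delta_n$ on which all four convergences are uniform. Choosing $j_n$ so that on $J_n$ the four deviations are each strictly smaller than $\epsilon/2^{n+1}$ (say), and combining with the uniform-on-cylinders step of the previous paragraph, produces the two required estimates for every $x\in J_n$, every $j\ge j_n$ and every $y$ in the corresponding cylinder. The main technical point is simply book-keeping: ensuring that the $O(1)$ distortion and summable-variation errors are compatible with the $\epsilon/2^n$ tolerance, which is automatic once $j_n$ is taken large enough; no non-routine obstacle appears because $\mu_n$ is a fixed ergodic measure with finite relevant integrals.
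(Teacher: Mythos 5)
Your argument is exactly what the paper invokes — the paper's proof is a one-line citation of Birkhoff, Shannon--McMillan--Breiman, and Egorov together with the regularity assumptions on $\phi$, $\psi$, $\log|T'|$, and you have correctly unpacked that citation (including the bounded-distortion and summable-variations steps needed to pass from the base point to uniform estimates over the cylinder). The approach is the same and the details are sound.
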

\begin{proof}
This is a straightforward consequence of the Birkhoff ergodic theorem and the Shannon-McMillan-Brieman Theorem combined with Egorov's Theorem and our assumptions on $\phi,\psi,\log|T'|$.
\end{proof}
We now define a sequence of natural numbers $k_n$ using the following inductive procedure. We let $k_1$ satisfy the following conditions: 
\begin{enumerate}
\item
$k_1\geq j_1$,
\item
$\epsilon_1k_1\int \psi\text{d}\mu_1\geq 4j_2\alpha_2\int\psi\text{d}\mu_2$, 
\item
$\epsilon_1k_1\lambda(\mu_1)\geq 4j_2\alpha_2\lambda(\mu_2)$,
\item
$k_1\epsilon\geq 4j_2\lambda(\mu_2)$.
\end{enumerate}
For $n\geq 1$ we can choose $k_{n+1}$ to satisfy:
\begin{enumerate}
\item
$(k_{n+1}-k_n)\epsilon\int\psi\text{d}\mu_{n+1}\geq\left|2^{n+2}k_n(\alpha_{n+1}-\alpha_n)\int\psi\text{d}\mu_{n}\right|$,
\item
$\epsilon k_{n+1}\int \psi\text{d}\mu_{n+1}\geq 2^{n+2}j_{n+2}\alpha_{n+2}\int\psi\text{d}\mu_{n+2}$,
\item
$(k_{n+1}-k_n)\epsilon\lambda(\mu_{n+1})\geq |2^{n+2}k_n(s_{n+1}-s_n)\lambda(\mu_n)|$,
\item
$\epsilon k_{n+1}\lambda(\mu_{n+1})\geq 2^{n+2}j_{n+2}s_{n+2}\lambda(\mu_{n+2})$,
\item
$(k_{n+1}-k_n)\epsilon\geq 2^{n+2}|\lambda(\mu_{n+1})-\lambda(\mu_n)|$
\item
$k_{n+1}\epsilon\geq 2^{n+2}j_{n+2}\lambda(\mu_{n+2})$.
\end{enumerate}
Now consider a point $x\in J_1\cap \sigma^{-k_1}(J_2)\cap \sigma^{-k_2}(J_3)\cap\cdots$. By the construction of the sets $J_n$ and the values $k_n$ it follows that $\lim_{n\to\infty}\frac{S_n\phi(x)}{S_n\psi(x)}=\alpha$. We can also define a measure supported on this set as follows. 
Let $\eta_n$ denote the measure such that 
$$\eta_n(I(i_1,\ldots,i_{k_n}))=\left\{\begin{array}{ccc}0&\text{ if }&I(i_1,\ldots,i_{k_n})\cap J_n=\emptyset\\
\mu_n(I(i_1,\ldots,i_{k_n}))&\text{ if }&I(i_1,\ldots,i_{k_n})\cap J_n\neq\emptyset.\end{array}\right.$$
defined on the algebra consisting of $k_n$ level cylinders
It follows that $\eta_n(J_n)\geq 1-\delta_n$ and thus if we define the measure (with respect to the Borel sigma algebra)
$$\eta=\eta_1\otimes\eta_2\circ T^{-k_1}\otimes \eta_3\circ T^{-k_2}\otimes\cdots$$
we will have 
$$\eta(J_1\cap \sigma^{-k_1}(J_2)\cap \sigma^{-k_2}(J_3)\cap\cdots)\geq\prod_{n=1}^{\infty}(1-\delta_n)>0$$
and we can normalise to a measure $\nu$. Let $C=\left(\prod_{n=1}^{\infty}(1-\delta_n)\right)^{-1}$.
\begin{lema}
For $\nu$ almost all $x$ we have that
$$\lim_{n\to\infty}\frac{S_n\phi(x)}{S_n\psi(x)}=\alpha\text{ and }\liminf_{r\to 0}\frac{\log\nu(B(x,r))}{\log r}\geq s.$$
\end{lema}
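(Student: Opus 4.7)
The plan is to verify the two conclusions separately, using the two defining properties of the sets $J_n$ from the preceding lemma together with the rapidly-growing sequence $(k_n)$ constructed via conditions (1)--(6). Observe first that $\nu$ is supported on $\bigcap_{n\geq 1}\sigma^{-k_{n-1}}(J_n)$ (with $k_0:=0$), so every $x=\Pi(\underline{i})$ in the support satisfies $\sigma^{k_{n-1}}(x)\in J_n$ for every $n$. Once $k_n-k_{n-1}\geq j_n$, both the Birkhoff sums and the cylinder mass/diameter on the $n$-th coordinate block are tightly controlled by $\mu_n$; the whole construction is arranged so that blocks are long enough for the relevant Birkhoff and Shannon--McMillan--Breiman estimates to be applicable.

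For the Birkhoff-ratio limit, fix $x$ in the support and any large $N$, and let $n=n(N)$ be such that $k_{n-1}\leq N<k_n$. Decompose
\[
S_N\phi(x)=\sum_{l=1}^{n-1}S_{k_l-k_{l-1}}\phi\bigl(\sigma^{k_{l-1}}x\bigr)+S_{N-k_{n-1}}\phi\bigl(\sigma^{k_{n-1}}x\bigr),
\]
and similarly for $\psi$. Property~(1) of $J_l$ forces $S_{k_l-k_{l-1}}\phi(\sigma^{k_{l-1}}x)$ to lie within relative error $\epsilon/2^l$ of $\alpha_l\cdot S_{k_l-k_{l-1}}\psi(\sigma^{k_{l-1}}x)$, and the $\psi$-sum on the $l$-th block is comparable to $(k_l-k_{l-1})\int\psi\,\text{d}\mu_l$. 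Conditions~(1) and~(3) on $k_n$ state that the $n$-th block's contribution to both $S_N\phi$ and $S_N\psi$ dominates the cumulative previous blocks by a factor $2^{n+2}$; hence $S_N\phi(x)/S_N\psi(x)$ is pushed into a neighbourhood of $\alpha_n$ of radius tending to zero. Since $\alpha_n\to\alpha$, the first statement follows; the initial transient where $N-k_{n-1}<j_n$ is absorbed using condition~(6).

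For the local dimension, given $r>0$ choose $N$ minimal with $|I(i_1,\dots,i_N)|\leq r$. Bounded distortion (the Renyi condition) together with the uniform expansion factor $\xi>1$ yields $\nu(B(x,r))\leq K\cdot\nu(I(i_1,\dots,i_N))$ for a constant $K$ independent of $r$. The block-product structure of $\eta$ gives
\[
\nu(I(i_1,\dots,i_N))\leq C\prod_{l=1}^{n-1}\mu_l\bigl(I^{(l)}\bigr)\cdot\mu_n\bigl(I^{(n,N)}\bigr),
\]
where $I^{(l)}$ is the full $l$-th block cylinder read off from $\underline{i}$ and $I^{(n,N)}$ the partial $n$-th block cylinder of length $N-k_{n-1}$. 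Property~(2) of $J_l$ then bounds $\mu_l(I^{(l)})\leq|I^{(l)}|^{s_l-\epsilon/2^l}$, and the analogous estimate applies to $I^{(n,N)}$ as soon as $N-k_{n-1}\geq j_n$. Taking logarithms, dividing by $\log|I(i_1,\dots,i_N)|=\sum_l\log|I^{(l)}|+O(1)$, and using $s_l\to s$ along with the geometric summability of the errors $\epsilon/2^l$ yields $\liminf_{r\to 0}\log\nu(B(x,r))/\log r\geq s$.

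The principal obstacle is the partial final block $[k_{n-1}+1,N]$ when $N-k_{n-1}<j_n$: the estimates supplied by the defining properties of $J_n$ are valid only beyond level $j_n$, so neither the Birkhoff ratio nor the measure-to-diameter ratio on such a short tail block is a priori controlled by $\mu_n$. Conditions~(2),~(4), and~(6) on $k_{n+1}$ are engineered precisely to absorb this: they force $k_n$ to be so large relative to $j_{n+1}$ that the worst-case contribution from the tail is negligible compared to the completed blocks already counted up to level $k_n$. The remainder is book-keeping---checking that the telescoping error terms $\epsilon/2^l$ sum to a uniformly bounded and asymptotically irrelevant deviation, so that both the ratio $S_N\phi(x)/S_N\psi(x)$ and the dimension quotient $\log\nu(B(x,r))/\log r$ converge to the correct limits.
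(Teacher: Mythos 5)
The structure of your argument is correct in spirit---block decomposition along the $k_n$, Birkhoff/SMB control on each block supplied by the defining properties of $J_l$, and absorption of the partial final block via the growth conditions on $k_n$. However, the key step in the local dimension estimate is not justified: you assert that the Renyi condition and uniform expansion give $\nu(B(x,r))\leq K\,\nu(I(i_1,\dots,i_N))$ with $K$ independent of $r$. This ball-to-cylinder comparison is a Gibbs-measure phenomenon; it relies on neighbouring cylinders of the same generation having comparable measure, which holds for Gibbs states of locally H\"older potentials but has no reason to hold here. The measure $\nu$ is a block-product of \emph{restrictions} of different measures $\mu_n$ to sets $J_n$, and it assigns mass $0$ to many cylinders while two adjacent surviving $N$-cylinders can carry wildly different $\nu$-mass (particularly across block boundaries where the controlling measure changes from $\mu_n$ to $\mu_{n+1}$, and in a countable alphabet where adjacent first-generation intervals have unrelated lengths). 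So the inequality $\nu(B(x,r))\leq K\,\nu(I(i_1,\dots,i_N))$ does not follow from distortion alone, and without it the rest of your cylinder-level computation does not yield a bound on $\nu(B(x,r))$.

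This is precisely what the paper's proof works around. Rather than reducing balls to a single cylinder, it fixes the scale $r$, locates it between two explicit thresholds built from $k_n$, $\lambda(\mu_n)$, $j_{n+1}$, $\lambda(\mu_{n+1})$, and then bounds $\nu(B(x,r))$ directly by \emph{counting} how many cylinders of the relevant generation the ball can meet, multiplying by the worst-case cylinder mass supplied by property (2) of $J_n$. Two regimes are treated separately: when the overhang into the $(n+1)$-st block is shorter than $j_{n+1}$ (so that the SMB estimate for $\mu_{n+1}$ is not yet available on that stub), and when it has length between $j_{n+1}$ and $k_{n+1}-k_n$. You correctly identify the short-stub problem in your final paragraph, but then only assert that conditions (2), (4), (6) ``absorb'' it; the actual absorption is exactly this cylinder-counting case analysis, which needs to be carried out and cannot be shortcut by a bounded-distortion lemma. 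The Birkhoff-ratio half of your argument is essentially fine and matches the paper's (terse) treatment.
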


\begin{proof}
By the construction of the measure $\nu$ we have that for $\nu$ almost all $x=\pi(\underline{i})$, where $\underline{i} \in \Sigma$, is  that
$$\lim_{n\to\infty}\frac{S_n\phi(x)}{S_n\psi(x)}=\alpha\text{ and }\lim_{n\to\infty}\frac{\log \nu(I(i_1,\ldots,i_n))}{\log |I(i_1,\ldots,i_n)|}=s.$$
To complete the proof we will fix $r>0$ sufficiently small. We fix $n\geq 2$ and first of all consider the case when 
$$e^{-k_n(\lambda(\mu_n)-\epsilon/2^{n-1})}\geq r\geq e^{-k_n(\lambda(\mu_n)-\epsilon/2^{n-1})-j_{n+1}(\lambda(\mu_{n+1})+\epsilon/2^n)}.$$
In this case $B(x,r)$ contains at most  $e^{j_{n+1}(\lambda(\mu_{n+1})+\epsilon/2^n)}$ sets of the form $[i_1,\ldots, i_{k_n}]$. Thus
\begin{eqnarray*}
\log\mu(B(x,r)) &\leq& \log C+k_n\lambda(\mu_n)(s_n-\epsilon/2^{n-1})+j_{n+1}(\lambda(\mu_{n+1})+\epsilon/2^n) \\ &\leq& -k_n\lambda(\mu_n)(s_n-\epsilon_n)\leq \log C+(s_n-\epsilon_n)\log r.
\end{eqnarray*}
Now consider the case where $j_{n+1}\leq k\leq k_{n+1}-k_n$ and 
$$e^{-k_n(\lambda(\mu_n)-k(\lambda(\mu_{n+1})-\epsilon/2^n))}\geq r>e^{-k_n(\lambda(\mu_n)-(k+1)(\lambda(\mu_{n+1}-\epsilon/2^{n+1}))}.$$
Thus we have that
\begin{eqnarray*}
\log \mu(B(x,r)) &\leq& \log C+\log\lambda(\mu_{n+1})+\log\mu(I(i_1,\ldots,i_{k_n+k}))\\
&\leq& \log C\log\lambda(\mu_{n+1})+k_n\lambda(\mu_n)(s_n-\epsilon/2^{n-1})+k\lambda(\mu_{n+1})(s_{n+1}-\epsilon/2^{n-1})\\
&\leq& \log C+(s_n-\epsilon/2^{n-1})\log r.
\end{eqnarray*}
Finally note that by the definition of $k_n$ 
$$e^{-k_n(\lambda(\mu_n)-k_{n+1})-k_n(\lambda(\mu_{n+1})-\epsilon_{n+1}/2)}\leq e^{-k_n(\lambda(\mu_n)-\epsilon)}n
$$ 
and so we have considered all possibles case for $r<e^{-k_2(\lambda(\mu_1)-\epsilon_1)}$ and the result follows.
\end{proof} 
 
The proof of Proposition \ref{wmeasure} is now complete. 
\section{Proof of Theorem \ref{analytic}}
We now turn to the question of when the spectrum is analytic. Throughout this section $\phi,\psi$ are both locally H\"{o}lder,  $\alpha_M<\infty$ , $\phi/\psi$ is uniformly bounded above
 and
$$\lim_{x\to 0}\frac{\psi(x)}{\log |T'(x)|}=\infty.$$
An important consequence of our assumptions is that if $\mu$ is a $T$-invariant measure for which  $\frac{\int\phi\text{d}\mu}{\int\psi\text{d}\mu}\in U$ then by Lemma \ref{bounded} $\int\psi\text{d}\mu$ cannot be too large and thus $\lambda(\mu)$ cannot be too large.

\begin{lema}\label{bounded2}
For any $C_1>0$ there exists $C_2>0$ such that if $\mu$ is a $T-$invariant measure for which $\lambda(\mu)\geq C_2$ then $\int\psi\text{d}\mu\geq C_1\lambda(\mu)$.
\end{lema}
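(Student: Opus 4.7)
My plan is to exploit the hypothesis $\lim_{x\to 0}\psi(x)/\log|T'(x)|=\infty$ to show that, when $\lambda(\mu)$ is large, almost all of the Lyapunov mass of $\mu$ must sit in a small neighbourhood of $0$, where by hypothesis $\psi$ dominates $\log|T'|$ by an arbitrarily large multiplicative factor.

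Concretely, given $C_1>0$, I would begin by fixing a constant $M>C_1$ (for definiteness $M=2C_1$) and using the limit hypothesis to select $\delta\in(0,1)$ such that
$$\psi(x)\geq M\log|T'(x)|\quad\text{for every }x\in(0,\delta)\cap\Lambda.$$

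The next step is to produce a uniform upper bound $L=L(\delta)$ for $\log|T'|$ on $[\delta,1]\cap\Lambda$. This is where one must invoke the EMR structure: since the endpoints of the partition $\{I_i\}$ accumulate only at $0$, only finitely many $I_i$ meet $[\delta,1]$; and on each such $I_i$ the Renyi bound $|T''|/|T'|^{2}\leq K$, together with the fact that $T$ maps $I_i$ onto all of $I$, forces $|T'|$ to be bounded above on $I_i$ by $e^{K}/|I_i|$. I expect this boundedness issue to be the only non-trivial point, but it is a standard distortion consequence of the Renyi condition and causes no real obstacle.

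With these ingredients in hand, the rest is a short calculation. For any $T$-invariant $\mu$, splitting the Lyapunov exponent gives
$$\int_{(0,\delta)}\log|T'|\,d\mu=\lambda(\mu)-\int_{[\delta,1]}\log|T'|\,d\mu\geq\lambda(\mu)-L,$$
and, combined with positivity of $\psi$ and the pointwise domination on $(0,\delta)$,
$$\int\psi\,d\mu\geq\int_{(0,\delta)}\psi\,d\mu\geq M\int_{(0,\delta)}\log|T'|\,d\mu\geq M(\lambda(\mu)-L).$$
Finally, I set $C_2=ML/(M-C_1)$; whenever $\lambda(\mu)\geq C_2$ the right-hand side is at least $C_1\lambda(\mu)$, which yields the desired inequality and completes the proof.
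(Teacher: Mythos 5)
Your proof is correct and follows essentially the same route as the paper: choose a neighbourhood of $0$ on which $\psi$ dominates $M\log|T'|$, bound $\log|T'|$ away from that neighbourhood by a constant, split $\lambda(\mu)$ accordingly, and solve for the threshold $C_2$. The only difference is that you spell out why $\log|T'|$ is bounded on $[\delta,1]$ via the Renyi/full-branch distortion estimate, whereas the paper simply asserts the existence of such a bound.
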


\begin{proof}
Let $\delta >0$, then there exists $A \in (0,1)$ such that if $ x \in (0, A)$ then
\begin{equation*}
\frac{\log |T'(x)|}{\psi(x)}<\delta.
\end{equation*}
Moreover, there exists a constant $C>0$ such that if $x \in (A, 1)$ then $\log |T'(x)| <C$. Let $\mu$ be a $T-$invariant measure satisfying $ \lambda(\mu) \geq C$, then we have that
\begin{eqnarray*}
\lambda(\mu)= \int_0^1 \log |T'(x)| d \mu = \int_0^A \log |T'(x)| d \mu  + \int_A^1 \log |T'(x)| d \mu &\\ \leq  \int_0^A \log |T'(x)| d \mu + \mu([A, 1]) C  \leq \int_0^A \log |T'(x)| d \mu +C.
\end{eqnarray*}
That is
\begin{equation*}
\lambda(\mu)-C \leq \int_0^A \log |T'(x)| d \mu.
\end{equation*}
We thus have,
\begin{eqnarray*}
\delta  \geq \frac{\int_0^A \log |T'(x)| d \mu }{\int_0^A \psi d \mu } \geq \frac{\lambda(\mu)-C}{\int_0^A \psi d \mu}.
\end{eqnarray*}
Therefore, since $\psi >0$, we have
\begin{equation} \label{psi}
\int_0^1 \psi d \mu \geq \int_0^A \psi d \mu \geq \frac{\lambda(\mu)-C}{\delta}.
\end{equation}
Thus, given $C_1 >0$ choose $\delta >0$ such that $1-\delta C_1 >0$ and let
\begin{equation*}
C_2 > \frac{C}{1-\delta C_1}.
\end{equation*}
Therefore, if  $\lambda(\mu) > C_2$ then $\lambda(\mu) (1-\delta C_1) > C$, which implies that
\begin{equation*}
\frac{\lambda(\mu)-C}{\delta} > C_1 \lambda(\mu).
\end{equation*}
Combining this with equation \eqref{psi} we obtain that
\begin{equation*}
\int_0^1 \psi d \mu \geq \int_0^A \psi d \mu \geq \frac{\lambda(\mu)-C}{\delta} \geq C_1 \lambda(\mu),
\end{equation*}
which completes the proof.
\end{proof}

\begin{lema}\label{finiteness}
For $\alpha\in U$ we have that for any $\delta>0$ either
\begin{enumerate}
\item $G_1(\alpha,q,\delta)<\infty $ for all $q>0$ or
\item $G_1(\alpha,q,\delta)<\infty$ for all $q<0$.
\end{enumerate}
\end{lema}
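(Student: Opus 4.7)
The plan is to exploit the fact that $\alpha \in U$ forces $\phi - \alpha\psi$ to have a definite sign as a multiple of $\psi$ near the origin, and then to combine this with the hypothesis $\psi / \log|T'| \to \infty$ to control the variational formula for the pressure. Since $U = (\alpha_m, \alpha_M) \setminus [\underline{\alpha}, \overline{\alpha}]$, there are two cases; I would show that $\alpha < \underline{\alpha}$ implies conclusion (2) while $\alpha > \overline{\alpha}$ implies conclusion (1). The two cases are symmetric, so I describe only the first.

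First I would record the sign condition. Exactly as in the proof of Lemma \ref{bounded}, the assumption $\alpha < \underline{\alpha}$ yields $\gamma > 0$ and $y' \in (0,1)$ such that $\phi(x) - \alpha\psi(x) \geq 2\gamma \psi(x)$ for all $x < y'$. Splitting any integral at $y'$ and using that $\phi, \psi$ are bounded on $[y', 1]$, I obtain a constant $M_1 > 0$ with
\[
\int (\phi - \alpha\psi)\, d\mu \geq 2\gamma \int \psi\, d\mu - M_1
\]
for every $\mu \in \mathcal{M}_T$.

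Next I would use the hypothesis $\lim_{x \to 0} \psi(x)/\log|T'(x)| = \infty$ to bound the $\psi$-integral below by the Lyapunov exponent. For any $\epsilon > 0$, choose $x_\epsilon > 0$ so small that $\psi(x) \geq \epsilon^{-1} \log|T'(x)|$ for $x < x_\epsilon$. Splitting the integrals at $x_\epsilon$ and again bounding the contribution on $[x_\epsilon, 1]$ by a constant $M_\epsilon$, I would deduce that
\[
\int \psi\, d\mu \geq \epsilon^{-1} \lambda(\mu) - M_\epsilon
\]
for every $T$-invariant measure $\mu$.

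Finally, I would fix $q < 0$ and estimate the variational expression. By the variational principle,
\[
G_1(\alpha, q, \delta) = \sup_{\mu} \left[ h(\mu) + q \int (\phi - \alpha\psi)\, d\mu - \delta \lambda(\mu) \right].
\]
Combining $h(\mu) \leq \lambda(\mu)$ (Ruelle's inequality) with the two estimates above, and using $q < 0$ to reverse inequalities at the right moments, the integrand is bounded above by
\[
\lambda(\mu)\!\left[ 1 - \frac{2\gamma|q|}{\epsilon} - \delta \right] + |q|\bigl(M_1 + 2\gamma M_\epsilon\bigr).
\]
Choosing $\epsilon < 2\gamma|q|$ makes the bracketed coefficient strictly negative, so the supremum is finite. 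This gives conclusion (2) for $\alpha < \underline{\alpha}$; the analogous argument with $q > 0$ gives conclusion (1) for $\alpha > \overline{\alpha}$.

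The main subtlety I expect is book-keeping around measures with $\lambda(\mu) = \infty$ or $\int \psi\, d\mu = \infty$. For such measures the above linear bound has the right sign only once the coefficient is negative, which is precisely what the choice of $\epsilon$ arranges; measures with small $\lambda(\mu)$ are harmless because then every term on the right is controlled by a constant. Beyond this, the argument is essentially arithmetic: the role of the hypothesis $\psi/\log|T'| \to \infty$ is exactly to let us make the coefficient of $\lambda(\mu)$ as negative as we please by sending $\epsilon \to 0$, which compensates for the $+\lambda(\mu)$ coming from Ruelle's inequality.
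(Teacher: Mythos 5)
Your proof is correct and reaches the same conclusion, but by a noticeably streamlined route. The paper also fixes $q$ of the appropriate sign and bounds the variational expression from the variational principle, but it does so by splitting the invariant measures into two sets according to whether $\int\phi\,\mathrm{d}\mu/\int\psi\,\mathrm{d}\mu$ lies on the far side or the near side of an intermediate value $\gamma$: for measures with ratio far from $\alpha$, Lemma~\ref{bounded2} is applied with $C_1$ chosen so that, after Ruelle's inequality, the entropy term is exactly absorbed by the $q(\gamma-\alpha)\int\psi\,\mathrm{d}\mu$ term; for measures with ratio close to $\alpha$, Lemma~\ref{bounded} gives $\int\psi\,\mathrm{d}\mu$ bounded, whence $\lambda(\mu)$ and $h(\mu)$ are bounded and the whole expression is trivially controlled. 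You avoid the case split entirely: the pointwise sign condition $\phi-\alpha\psi\geq 2\gamma\psi$ near $0$ gives a lower bound for $\int(\phi-\alpha\psi)\,\mathrm{d}\mu$ that is valid for every invariant measure simultaneously, and the standing hypothesis $\psi/\log|T'|\to\infty$ is linearised into $\int\psi\,\mathrm{d}\mu\geq\epsilon^{-1}\lambda(\mu)-M_\epsilon$ (which is in effect a re-derivation of Lemma~\ref{bounded2} in additive form). Combined with Ruelle's inequality this produces a single affine upper bound in $\lambda(\mu)$ whose slope is made negative by taking $\epsilon$ small relative to $2\gamma|q|$; the constant term then bounds the pressure. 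What the paper gains is modularity (each case invokes one of the preparatory lemmas as a black box); what your version gains is a one-pass linear estimate in which the interaction of $q$, the sign condition, the $\psi/\log|T'|$ hypothesis, and $\delta$ is fully explicit, and the issue of measures with $\lambda(\mu)=\infty$ or $\int\psi\,\mathrm{d}\mu=\infty$ is handled uniformly by the sign of the slope rather than by separate discussion.
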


\begin{proof}
Since $\alpha\in U$ we know that either $\alpha>\overline{\alpha}$ or $\alpha<\underline{\alpha}$. To start we will assume that $\alpha>\overline{\alpha}$. We fix $\gamma\in (\overline{\alpha},\alpha)$ and $q>0$. By the variational principle we need to show that there is a uniform upper bound on
$$h(\mu)+q\left(\int \phi\text{d}\mu-\alpha \int\psi\text{d}\mu\right)-\delta \lambda(\mu)$$
for all $T$-invariant probability measures $\mu$. We will split the set of invariant measures into two sets depending on whether or not $$\frac{\int\phi\text{d}\mu}{\int\psi\text{d}\mu}\leq\gamma.$$

Firstly, if $\int\phi\text{d}\mu \leq \gamma \int\psi\text{d}\mu$ then $q(\int \phi\text{d}\mu-\alpha\int\psi\text{d}\mu)\leq q(\gamma-\alpha)\int\psi\text{d}\mu<0$. Furthermore, by taking $C_1=(-q(\gamma-\alpha))^{-1}$ in Lemma \ref{bounded2} it follows that there exists $C_2>0$ such that if $\lambda(\mu)\geq C_2$ we have that (using Ruelle's inequality)
\begin{equation*}
\int\psi\text{d}\mu \geq C_1 \lambda(\mu) \geq C_1 h(\mu) = \frac{1}{-q(\gamma-\alpha)} h(\mu).
\end{equation*}
That is,
\[ -q(\gamma-\alpha) \int\psi\text{d}\mu \geq  h(\mu).\]
We therefore have
\begin{eqnarray*}
h(\mu)+q\left(\int \phi\text{d}\mu-\alpha \int\psi\text{d}\mu\right)-\delta \lambda(\mu)  \leq & \\
 -q(\gamma-\alpha) \int\psi\text{d}\mu + q(\gamma-\alpha)\int\psi\text{d}\mu-\delta \lambda(\mu) = -\delta \lambda(\mu) \leq 0.
\end{eqnarray*}
On the other hand, if
$$\frac{\int\phi\text{d}\mu}{\int\psi\text{d}\mu}\geq\gamma.$$
By Lemma \ref{bounded} there exists a constant $K'$ such that $\int\psi\text{d}\mu <K'$. We also have that there exists a uniform upper bound for $K''>0$ for
\[ \frac{\lambda(\mu)}{\int\psi\text{d}\mu} < K''. \]
Therefore, there exists $K>0$ such that $\max\left\{h(\mu),\lambda(\mu),\int\psi\text{d}\mu\right\}\leq K$. This means that
$$h(\mu)+q\left(\int \phi\text{d}\mu-\alpha \int \psi\text{d}\mu\right)-\delta(\alpha)\lambda(\mu)\leq K+q\alpha_M K.$$
So for $\alpha\in (\overline{\alpha},\alpha_M)$ we have that for all $q>0$  the function $G_1(\alpha,q,\delta)$ is bounded above,
\[G_1(\alpha,q,\delta) < \infty.  \]
For the case where $\alpha\in (\alpha_m,\underline{\alpha})$ we fix $q<0$ and $\gamma\in (\alpha,\underline{\alpha})$. The same argument allow us to conclude that $G_1(\alpha,q,\delta)<\infty$.

%
%

\end{proof}

As a result of Lemma \ref{finiteness} we can investigate the behaviour of the function $G_1$ when $\delta=\delta(\alpha)$ and $\alpha\in U$.

\begin{lema}\label{options}
For $\alpha\in U$ we have that $G_1(\alpha,q,\delta(\alpha))\geq 0$ for all $q \in \R$ and that either
\begin{enumerate}
\item\label{case1}
 there exists a unique $q_c\neq 0$ such that $G_1(\alpha,q,\delta(\alpha))=0$ and $$\frac{\partial }{\partial q}G_1(q,\delta(\alpha),\alpha) \big|_{q=q_c}=0$$ or
\item
 $\delta(\alpha)=\dim\Lambda$ and $P(-\delta(\alpha)\log|T'|)= 0$.
\end{enumerate}
\end{lema}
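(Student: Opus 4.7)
The strategy is to show that $G_1(\alpha,q,\delta(\alpha))$ is everywhere nonnegative, conclude via Lemma \ref{pospres} that it must actually vanish somewhere, and then use analyticity and strict convexity of the pressure to locate this zero and split into the two cases of the statement.

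\textbf{Nonnegativity.} First I would choose a sequence $(\mu_n)$ in $\tilde{\mathcal{M}}(T)$ realising the supremum defining $\delta(\alpha)$: $\int\phi\,\text{d}\mu_n\big/\int\psi\,\text{d}\mu_n=\alpha$ and $h(\mu_n)/\lambda(\mu_n)\to\delta(\alpha)$ from below (such sequences exist by the definition of $\delta(\alpha)$, after convex-combination adjustments as in Lemma \ref{continuity} to pin the ratio to $\alpha$). Since $\alpha\in U$, Lemma \ref{bounded} gives a uniform upper bound on $\int\psi\,\text{d}\mu_n$, and Lemma \ref{bounded2} (applied contrapositively) then yields a uniform upper bound on $\lambda(\mu_n)$. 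Writing $h(\mu_n)-\delta(\alpha)\lambda(\mu_n)=\lambda(\mu_n)\bigl(h(\mu_n)/\lambda(\mu_n)-\delta(\alpha)\bigr)$, the bracketed factor tends to $0$ from below while the prefactor stays bounded, so the product tends to $0$. For every $q\in\R$, the variational principle now gives
$$G_1(\alpha,q,\delta(\alpha))\geq h(\mu_n)+q\Bigl(\int\phi\,\text{d}\mu_n-\alpha\int\psi\,\text{d}\mu_n\Bigr)-\delta(\alpha)\lambda(\mu_n)=h(\mu_n)-\delta(\alpha)\lambda(\mu_n),$$
and letting $n\to\infty$ yields $G_1(\alpha,q,\delta(\alpha))\geq 0$ for all $q\in\R$.

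\textbf{Existence, uniqueness and dichotomy.} Next, if $G_1(\alpha,q,\delta(\alpha))>0$ held for every $q\in\R$ (using the convention $\infty>0$), Lemma \ref{pospres} would force $\dim J(\alpha)>\delta(\alpha)$, contradicting Theorem \ref{main}. Hence $G_1(\alpha,q_c,\delta(\alpha))=0$ for some $q_c\in\R$, and in particular the pressure is finite at $q_c$. By Lemma \ref{finiteness}, the finite region of $q\mapsto G_1(\alpha,q,\delta(\alpha))$ contains one of the open half-lines $(0,\infty)$ or $(-\infty,0)$, and $q_c$ lies in this region; Proposition \ref{tf_maps}(3) then ensures this function is real analytic and strictly convex there. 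Combined with $G_1\geq 0$ and $G_1(\alpha,q_c,\delta(\alpha))=0$, strict convexity forces $q_c$ to be the unique global minimum and $\partial G_1/\partial q|_{q=q_c}=0$. If $q_c\neq 0$ we are in case (1); if $q_c=0$ then $P(-\delta(\alpha)\log|T'|)=G_1(\alpha,0,\delta(\alpha))=0$, and since for EMR maps Bowen's formula identifies $\dim\Lambda$ as the unique zero of $t\mapsto P(-t\log|T'|)$ (cf.\ Mauldin--Urba\'nski), we conclude $\delta(\alpha)=\dim\Lambda$, i.e.\ case (2). The delicate point is the nonnegativity step: without the uniform control on $\lambda(\mu_n)$ coming from Lemma \ref{bounded2}, the product $\lambda(\mu_n)\bigl(h(\mu_n)/\lambda(\mu_n)-\delta(\alpha)\bigr)$ could escape to $-\infty$ along the maximising sequence and the variational lower bound would collapse; this is precisely where the standing hypotheses of Theorem \ref{analytic} are used.
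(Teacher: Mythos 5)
Your proof is correct and follows essentially the same route as the paper: nonnegativity of $G_1$ via the variational principle along a maximizing sequence with uniformly bounded Lyapunov exponents, existence of a zero $q_c$ by contradiction, and Lemma \ref{finiteness} plus convexity/analyticity of the pressure to identify $q_c$ as the turning point. The two small deviations are stylistic: you invoke Lemma \ref{pospres} together with Theorem \ref{main} for the contradiction (the paper re-runs the compact-approximation and equilibrium-state argument internally), and you organize the dichotomy by $q_c=0$ versus $q_c\neq 0$ (the paper splits on $\delta(\alpha)=\dim\Lambda$ versus $\delta(\alpha)<\dim\Lambda$); both shortcuts are legitimate and reach the same conclusion.
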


\begin{proof}
We first prove that  for all $q \in \R$ we have $G_1(\alpha,q,\delta(\alpha))\geq 0$. Note that since $\alpha\in U$, there exists a constant $C>0$ and a sequence of $T$-invariant measures $(\mu_n)_n$ such that $\lambda(\mu_n)\leq C$, $\frac{\int \phi\text{d}\mu_n}{\int\psi\text{d}\mu_n}=\alpha$ and $\lim_{n\rightarrow\infty}\frac{h(\mu_n)}{\lambda(\mu_n)}=\delta(\alpha)$. We thus have
for all $q \in \R$ and $n \in \N$ that
$$G_1(\alpha,q,\delta(\alpha)) \geq h(\mu_n)-\delta(\alpha)\lambda(\mu_n).$$
Letting $n$ tend to infinity we obtain
 $$G_1(\alpha,q,\delta(\alpha))\geq 0.$$
If $\delta(\alpha)=\dim\Lambda$ the above argument together with  Bowen's formula,
$$P(-\dim\Lambda\log|T'|)\leq 0,$$
  implies that $P(-\delta(\alpha)\log|T'|)= 0$.
%
%

 From now on we can suppose that $\delta(\alpha)<\dim\Lambda$. In particular,
 $$0 <P(-\delta(\alpha)\log|T'|),$$
note that it could be infinite. Moreover,  since $\alpha_m<\alpha<\alpha_M$ we have that $$\lim_{q\to\infty}G_1(\alpha,q,\delta(\alpha))=\lim_{q\to -\infty}G_1(\alpha,q,\delta(\alpha))=\infty.$$
Now, in order to obtain a contradiction, suppose there exists $C_1>0$ such that
 $$G_1(\alpha,q,\delta(\alpha))\geq C_1>0$$
 for all $q\in\R$.
 Then we can find a compact $T$-invariant subset $\Lambda_n$, where  $\Lambda_n \subset \Lambda$, such that for every $q \in \R$ we have
 $$ 0 < C_1/2 \leq P_{T|\Lambda{_n}}(q(\phi-\alpha\psi)-\delta(\alpha)\log|T'|)< \infty$$
 and such that the following holds
 \begin{eqnarray*}
 \lim_{q\to-\infty}P_{T|\Lambda_{n}}(q(\phi-\alpha\psi)-\delta(\alpha)\log|T'|)=\infty \text{ and } &\\
 \lim_{q\to\infty}P_{T|\Lambda_{n}}(q(\phi-\alpha\psi)-\delta(\alpha)\log|T'|)=\infty.
 \end{eqnarray*}
 Thus there will exist a turning point $q_t \in \R$ and the equilibrium state $\mu_t$ associated to $q_t(\phi-\alpha\psi)-\delta(\alpha)\log|T'|$ restricted to $\Lambda_n$ satisfying
 $\frac{\int\phi\text{d}\mu_t}{\int\psi\text{d}\mu_t}=\alpha$ and $\frac{h(\mu_t)}{\lambda(\mu_t)}>\delta(\alpha)$, which is a contradiction.

Thus, there exists a nonzero point $q_c \in \R$ such that
 $G_1(\alpha,q_c,\delta(\alpha))=0.$ Since $G_1(\alpha,0,\delta(P(-\delta(\alpha)\log|T'|)>0$ and
 $$\lim_{q\to\infty}G_1(\alpha,q,\delta(\alpha))=\lim_{q\to -\infty}G_1(\alpha,q,\delta(\alpha))=\infty
 $$
 it follows from Lemma \ref{finiteness} that $q_c$ must be a turning point and the result follows.

\end{proof}

We can now split the region $(\alpha_m,\alpha_M)$ up into three intervals depending on whether $\alpha$ in $U$ or not and if so which case of Lemma \ref{options} is true.

\begin{lema}\label{partition}
We can write $(\alpha_m,\alpha_M)=J_1\cup J_2\cup J_3$ where $J_1,J_3$ are intervals or the empty set, $J_2$ is an interval or a single point such that $E=[\underline{\alpha},\overline{\alpha}]\subset J_2$ and
\begin{enumerate}
\item
For $\alpha\in J_2$, $\dim J(\alpha)=\dim\Lambda$.
\item
For $\alpha\in J_1\cup J_3$ we have that there exists a unique $q_c\neq 0$ such that $P(q_c(\phi-\alpha\psi)-\delta(\alpha)\log|T'|=0$ and $\frac{\partial}{\partial q}G_1(q_c,\delta(\alpha),\alpha)=0$
\end{enumerate}
\end{lema}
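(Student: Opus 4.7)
The plan is to construct $J_2$ as the set of $\alpha\in(\alpha_m,\alpha_M)$ at which $\delta(\alpha)=\dim\Lambda$, then take $J_1$ and $J_3$ to be the pieces of the complement lying to the left and right of $J_2$. Lemma \ref{options} will then take care of item (2) immediately for $\alpha\in J_1\cup J_3\subset U$, so the real content is to (a) show $E\subset J_2$, (b) show that the set $\{\alpha:\delta(\alpha)=\dim\Lambda\}$ is actually an interval, and (c) show it contains $E$ in the sense required.

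For (a), fix $\alpha\in E$ and a sequence $x_n\to 0$ with $\phi(x_n)/\psi(x_n)\to\alpha$. Using the Markov structure and the assumption $\psi(x)/\log|T'(x)|\to\infty$ as $x\to 0$, I can pick a sequence of periodic orbits passing through small cylinders $I_{k_n}$ near $0$ and build invariant measures $\nu_n$ supported on these orbits with $\int\phi\,d\nu_n/\int\psi\,d\nu_n\to\alpha$ and, crucially, with $\lambda(\nu_n)/\int\psi\,d\nu_n\to 0$. Meanwhile, by the approximation property in Proposition \ref{tf_maps}, for any $\epsilon>0$ I pick a compactly supported invariant measure $\mu_{\mathrm{max}}$ with $h(\mu_{\mathrm{max}})/\lambda(\mu_{\mathrm{max}})\geq\dim\Lambda-\epsilon$ and with $\int\psi\,d\mu_{\mathrm{max}}$ and $\lambda(\mu_{\mathrm{max}})$ finite. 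For $p_n\in(0,1)$ chosen so that $p_n\int\psi\,d\nu_n\to\infty$ while $p_n\lambda(\nu_n)\to 0$ (possible because of the ratio asymptotics above), the convex combination $\mu_n=(1-p_n)\mu_{\mathrm{max}}+p_n\nu_n$ has $\int\phi\,d\mu_n/\int\psi\,d\mu_n\to\alpha$ and $h(\mu_n)/\lambda(\mu_n)\to h(\mu_{\mathrm{max}})/\lambda(\mu_{\mathrm{max}})\geq\dim\Lambda-\epsilon$. Letting $\epsilon\to 0$ and invoking Theorem \ref{main} gives $\dim J(\alpha)=\dim\Lambda$, i.e.\ $\alpha\in J_2$.

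For (b), suppose $\alpha_1<\alpha_2$ both lie in $\{\alpha:\delta(\alpha)=\dim\Lambda\}$ and fix $\alpha\in(\alpha_1,\alpha_2)$. Applying Theorem \ref{main} I choose invariant measures $\mu^{(i)}$ with ratios tending to $\alpha_i$ and dimensions tending to $\dim\Lambda$. For suitable convex combinations $\theta\mu^{(1)}+(1-\theta)\mu^{(2)}$ the ratio can be made arbitrarily close to $\alpha$ and the dimension stays close to $\dim\Lambda$, so $\delta(\alpha)=\dim\Lambda$ again. Hence the set is convex in $(\alpha_m,\alpha_M)$, i.e.\ an interval (possibly degenerating to a point), and by (a) it contains $E$. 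Define $J_2$ to be this interval, and $J_1,J_3$ the (possibly empty) components of $(\alpha_m,\alpha_M)\setminus J_2$ to its left and right, giving the ordering $J_1\leq J_2\leq J_3$ automatically.

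For (c), any $\alpha\in J_1\cup J_3$ lies in $U$ since $E\subset J_2$, and at such $\alpha$ we have $\delta(\alpha)<\dim\Lambda$, so case (2) of Lemma \ref{options} fails and case (1) applies, producing the required unique $q_c\neq 0$ with $G_1(\alpha,q_c,\delta(\alpha))=0$ and $\partial_q G_1(q_c,\delta(\alpha),\alpha)=0$. The main obstacle is step (a): one has to choose the weights $p_n$ carefully so that the ratio $\int\phi\,d\mu_n/\int\psi\,d\mu_n$ is shifted to $\alpha$ by the very-high-$\int\psi$ part while the Lyapunov exponent and entropy of $\mu_n$ remain controlled by $\mu_{\mathrm{max}}$; this balance is exactly what the hypothesis $\psi/\log|T'|\to\infty$ near $0$ permits.
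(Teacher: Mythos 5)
Your argument is essentially correct and reaches the same conclusion as the paper, but by a noticeably different route. The paper defines $J_2$ as the closure of the set of $\alpha$ for which $P(q(\phi-\alpha\psi)-(\dim\Lambda-\epsilon)\log|T'|)\ge 0$ for all $q\in\R$ and $\epsilon>0$, proves the interval property by monotonicity/convexity of the pressure function, and then invokes Lemma \ref{pospres} to get $\dim J(\alpha)=\dim\Lambda$; you instead define $J_2$ directly as the level set $\{\alpha:\delta(\alpha)=\dim\Lambda\}$ and argue the interval property and the inclusion $E\subset J_2$ entirely via convex combinations of invariant measures. Both work; the measure-theoretic argument is arguably more transparent, while the pressure formulation ties more cleanly into the objects $G_1,G_2$ used afterwards in Lemma \ref{analicity}.

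One inaccuracy worth flagging is in your step (a): for a general $\alpha\in E=[\underline{\alpha},\overline{\alpha}]$ there need not exist a sequence $x_n\to 0$ with $\phi(x_n)/\psi(x_n)\to\alpha$. The set of subsequential limits of $\phi/\psi$ at $0$ is a closed subset of $E$ that is not necessarily connected (it is only guaranteed to contain $\underline{\alpha}$ and $\overline{\alpha}$), so the periodic-orbit construction with ratio tending to $\alpha$ may be impossible for $\alpha$ strictly inside $E$. The paper avoids this by constructing a sequence of measures with ratio tending to the endpoint $\underline{\alpha}$ (resp.\ $\overline{\alpha}$) and then interpolating with a fixed high-dimension measure $\mu$ of ratio $\gamma$; this reaches all $\alpha\in(\underline{\alpha},\gamma)$. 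Your proof survives because step (b) shows $\{\alpha:\delta(\alpha)=\dim\Lambda\}$ is an interval, so proving (a) only for the two endpoints $\underline{\alpha},\overline{\alpha}$ (which are genuine accumulation points) already forces the whole of $E$ into $J_2$ — but you should restrict (a) to the endpoints and make explicit that (b) then fills in the interior, rather than claiming the sequence $x_n$ exists for every $\alpha\in E$. A smaller point: for interior $\alpha\in E$, $\delta(\alpha)$ need not a priori coincide with $\dim J(\alpha)$ (Theorem \ref{main} only gives this equality on $U$), so it is cleaner to phrase $J_2$ in terms of $\dim J(\alpha)$ or to add the one-line argument that interpolating between nearby ratios realizes ratio exactly $\alpha$ while keeping the dimension near $\dim\Lambda$.
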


\begin{proof}
We define
$$J_2=\overline{\{\alpha\in (\alpha_m,\alpha_M):P(q(\phi-\alpha\psi)-(\dim\Lambda-\epsilon)\log |T'|)\geq 0   \text{ for } q \in \R \text{ and }\epsilon>0\}}$$
and also let
\begin{eqnarray*}
J_1=\{\alpha\in (\alpha_m,\alpha_M):\alpha<\gamma \text{ for every }  \gamma\in J_2\}\text{ and } &\\
J_3=\{\alpha\in (\alpha_m,\alpha_M):\alpha>\gamma \text{ for every } \gamma\in J_2\}.
\end{eqnarray*}
For $\alpha\in J_2$ we have combining  Lemma \ref{pospres} with Theorem \ref{main} that $\dim J(\alpha)=\dim\Lambda$.

If $\alpha_1,\alpha_2\in J_2$ and $\gamma\in (\alpha_1\,\alpha_2)$ then $\gamma\in J_2$ by the convexity of the pressure function. Therefore $J_2$ is either a single point or an interval and it thus follows that both $J_1$ and $J_3$ are either empty or intervals.

To show that $E=[\underline{\alpha},\overline{\alpha}]\subset J_2$   we fix $\alpha\in E$ and choose $\epsilon>0$ and  $\gamma$  satisfying that there exist a $T$-invariant measure $\mu$ such that $\lambda(\mu)<\infty$, $\frac{h(\mu)}{\lambda(\mu)}>\dim\Lambda-\epsilon/2$ and $\frac{\int\phi\text{d}\mu}{\int\psi\text{d}\mu}=\gamma$. Suppose that $\gamma>\underline{\alpha}$ and let $\alpha\in (\underline{\alpha},\gamma)$. Since $\alpha\in E$ we can find a sequence of $T$-invariant measures $\nu_n$ and $0<p_n<1$ such that
\[\lim_{n\to\infty}\frac{\int\phi\text{d}\nu_n}{\int\psi\text{d}\nu_n}=\underline{\alpha} , \lim_{n\to\infty}p_n\lambda(\nu_n)=0 \text{ and } \lim_{n\to\infty}p_n\int\psi\text{d}\nu_n=\infty.\]
 Thus, if we consider the measures $\eta_n=p_n\nu_n+(1-p_n)+\nu$ then
$\lim_{n\to\infty}\frac{\int\phi\text{d}\eta_n}{\int\psi\text{d}\eta_n}=\underline{\alpha}$ and $\limsup_{n\to\infty}\frac{h(\eta_n)}{\lambda(\eta_n)}\geq\dim\Lambda-\epsilon/2$. Therefore, by taking an appropriate convex combination,  we can find a $T$-invariant measure $\nu$ such that
\[\frac{h(\nu)}{\lambda(\nu)}>\dim\Lambda-\epsilon \text{ and } \frac{\int\phi\text{d}\nu}{\int\psi\text{d}\nu}=\alpha.\]
By the variational principle, this means that  for all $q\in\R$  we have
\[P(q(\phi-\alpha)-(\dim\Lambda-\epsilon)\log |T'|)>0.\]
 The case when $\alpha>\gamma$ can be dealt with analogously by using the interval $(\gamma,\overline{\alpha})$. Finally note that if $\gamma=\underline{\alpha}=\overline{\alpha}$ then we can verify that $\gamma\in J_2$ directly by the variational principle.

Now fix $\alpha\in J_1$. This means that $\alpha\in U$ and that there exists $q\in\R$ such that
$$P(q(\phi-\alpha\psi)-(\dim\Lambda)\log |T'|)<0.$$
Thus, by Lemma \ref{options} we know that $\delta(\alpha)<\dim\Lambda$ and since then $$P(-\delta(\alpha)\log |T'(x)|)>0$$
 we must be in case \ref{case1} of Lemma \ref{options}. The case when $\alpha\in J_3$ can be dealt with analogously. This completes the proof.
\end{proof}

The three intervals $J_1,J_2,J_3$ will be exactly the three intervals in the statement of Theorem \ref{analytic}. To complete the proof of Theorem \ref{analytic} we simply need to prove that $\alpha\to\dim J(\alpha)$ varies analytically on $J_1$ and $J_3$.

\begin{lema}\label{analicity}
The function $\alpha\to J(\alpha)$ is analytic in $J_1$ and $J_3$.
\end{lema}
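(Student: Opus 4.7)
The plan is to apply the implicit function theorem to the pair of equations
\[ G_1(\alpha,q,\delta) = 0, \qquad \frac{\partial G_1}{\partial q}(\alpha,q,\delta) = 0, \]
which by Lemma \ref{partition} uniquely determine $(q_c(\alpha),\delta(\alpha))$ for $\alpha \in J_1\cup J_3$. Since $\delta(\alpha)=\dim J(\alpha)$ on $U\supset J_1\cup J_3$ by Theorem \ref{main}, analyticity of $\delta$ as a function of $\alpha$ is precisely what we need.

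First I would fix $\alpha_0 \in J_1$ (the case $\alpha_0\in J_3$ is symmetric) and set $q_0 = q_c(\alpha_0)$, $\delta_0 = \delta(\alpha_0)$, and verify that $G_1$ is jointly real analytic in $(\alpha,q,\delta)$ in a neighbourhood of $(\alpha_0,q_0,\delta_0)$. By Lemma \ref{finiteness}, $G_1$ is finite on an open set containing this point, so the parameters lie in the range covered by the Mauldin--Urba\'nski--Sarig thermodynamic formalism. Since the three parameters enter linearly in the locally H\"older potential $q(\phi-\alpha\psi)-\delta\log|T'|$, joint analyticity follows from analytic perturbation of the transfer operator (a mild extension of Proposition \ref{tf_maps}(3) from a single parameter to a finite-dimensional analytic family, using the spectral gap).

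Next I would compute the Jacobian of the map $(q,\delta)\mapsto (G_1,\partial_q G_1)$ at the critical point. The standard thermodynamic identities give $\partial_\delta G_1 = -\lambda(\mu_c)$ and $\partial_{qq} G_1 = \sigma^2_{\mu_c}(\phi-\alpha\psi)$, where $\mu_c$ is the (unique, ergodic) equilibrium state at the critical point and $\sigma^2$ denotes the asymptotic variance. Because we are at a turning point, $\partial_q G_1=0$, so the determinant reduces to
\[ \lambda(\mu_c)\,\sigma^2_{\mu_c}(\phi-\alpha_0\psi). \]
Expansion of $T$ gives $\lambda(\mu_c)>0$. The variance vanishes only if $\phi-\alpha_0\psi$ is cohomologous to a constant $c$; integrating against $\mu_c$ (for which $\int(\phi-\alpha_0\psi)\,\text{d}\mu_c=0$) forces $c=0$, which would then force $\int\phi\,\text{d}\nu/\int\psi\,\text{d}\nu=\alpha_0$ for every invariant $\nu$ with $\int\psi\,\text{d}\nu<\infty$, contradicting $\alpha_m<\alpha_M$. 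Hence the Jacobian is non-singular.

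The implicit function theorem then provides real analytic branches $\alpha\mapsto q_c(\alpha)$ and $\alpha\mapsto \delta(\alpha)$ in a neighbourhood of $\alpha_0$, and since $\alpha_0$ was arbitrary this yields analyticity of $\alpha\mapsto\dim J(\alpha)$ throughout $J_1$, and similarly on $J_3$. The main obstacle I anticipate is the joint analyticity in the first step: Proposition \ref{tf_maps}(3) is stated only for one-parameter families, so the argument relies on the (essentially standard) fact that the leading eigenvalue of the Ruelle operator depends analytically on a jointly analytic family of locally H\"older potentials whenever the operator has a spectral gap, a point that would need to be spelled out carefully for countable state systems.
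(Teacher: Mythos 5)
Your proposal takes essentially the same approach as the paper: both apply the implicit function theorem to the system $G_1=0$, $\partial_q G_1=0$, compute the Jacobian (which is triangular at the critical point since $\partial_q G_1=0$), and deduce nondegeneracy of $\partial_{qq}G_1$ from the fact that $\phi-\alpha\psi$ cannot be cohomologous to a constant because $\alpha_m<\alpha<\alpha_M$ gives invariant measures with $\int(\phi-\alpha\psi)\,\text{d}\mu$ of both signs. Your write-up is somewhat more careful on the joint (as opposed to separate) analyticity of the pressure in $(\alpha,q,\delta)$, a point the paper passes over briefly, but the argument is the same.
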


\begin{proof}
We will proof this result for the interval $J_1$, since the proof for the interval $J_3$ is analogous. Note that for $\alpha\in J_1$ we have that $\dim J(\alpha)=\delta(\alpha)$. We will let $G_2:J_2\times\R^{+}\times (0,\dim\Lambda)\to\R$ be defined by
$$G_2(\alpha,q,\delta)=\frac{\partial}{\partial q}G_1(\alpha,q,\delta)$$
and note that since $G_1$ is finite throughout the specified region $G_2$ is well defined.
Let $G:J_2\times\R^{+}\times (0,\dim\Lambda)\to \R^2$ be defined by
$$G(\alpha,q,\delta)=(G_1(\alpha,q,\delta),G_2(\alpha,q,\delta)).$$
For each $\alpha\in J_1$ by Lemma \ref{partition} that there exists a unique $q(\alpha) \in \R$ such that
\[ G(\alpha,q(\alpha),\delta(\alpha))=(0,0) \text{ and } \dim J(\alpha)=\delta(\alpha).\]
Note that $G$ is finite and varies analytically in each of the three variables $q,\alpha,\delta$ throughout its range. Thus, to complete the proof we wish to apply the Implicit Function Theorem. To be able to do this it suffices to show that the matrix

$$\begin{pmatrix}\frac{\partial G_1}{\partial \delta}&\frac{\partial G_1}{\partial q}\\ \frac{\partial G_2}{\partial \delta}&\frac{ \partial G_2}{\partial q}\end{pmatrix}$$
is invertible at each point $(\alpha,q(\alpha),\delta(\alpha))$. At such points $\frac{\partial{G_1}}{\partial q}=0$ and $\frac{\partial G_1}{\partial \delta}<0$ (Indeed, it corresponds to the Lyapunov exponent with a minus sign). So we need to show that $\frac{\partial{G_2}}{\partial q}$ is nonzero at $(\alpha,q(\alpha),\delta(\alpha))$. If $\phi-\alpha\psi$ is not cohomologous to a constant then the function $G_1$ is strictly convex in the variable $q$ and the proof is complete. To deduce that $\phi-\alpha\psi$ is not cohomologous to a constant note that $\alpha_m<\alpha<\alpha_M$ and thus there exist $T$-invariant measures $\mu_1$ and $\mu_2$ such that $\int(\phi-\alpha\psi)\text{d}\mu_1<0$ and $\int(\phi-\alpha\psi)\text{d}\mu_2>0$ therefore $\phi-\alpha\psi$ cannot be cohomologous to a constant. 
\end{proof}

\section{Discontinuities in the spectrum and applications to non-uniformly hyperbolic systems}

In this section we show that in the setting of Theorem \ref{analytic} it is possible that the function $\alpha\to\dim J(\alpha)$ is discontinuous at a point in $(\alpha_m,\alpha_M)$.
We stress that this is a new phenomenon that does not occur in the uniformly hyperbolic setting with regular potentials. Here we not only establish conditions for this to happen, but also exhibit very natural non-uniformly hyperbolic dynamical systems where these conditions are satisfied and therefore, regular potentials have discontinuous spectrum.

We will denote by $\mu_{SRB}$ the T-invariant measure of maximal dimension and by
 $s_{\infty}:=\inf\{s : P(-s\log|T'|) < \infty \}$. We have the following result,
\begin{prop}\label{discont}
Let $\phi\in\mathcal{R}$ and $\psi\in\mathcal{R}_{\eta}$ be such that
\[\lim_{x\to 0}\frac{\psi(x)}{\log |T'(x)|}=\infty\]
Assume that
\begin{enumerate}
\item $\lim_{x\to 0}\frac{\phi(x)}{\psi(x)}=0$¬
\item there exists a $T$-invariant probability measure, $\mu$, such that $\int\phi\text{d}\mu<0$;
\item $\dim\Lambda>s_{\infty}$,
\item  $\int \phi\text{d}\mu_{\text{SRB}}>0$
\end{enumerate}
then the function $\alpha\to \dim J(\alpha)$ is discontinuous at $\alpha=0$.
\end{prop}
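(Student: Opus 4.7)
The plan is to establish $b(0)=\dim\Lambda$ directly from the theorems above, and then to rule out $\lim_{\alpha\to 0}b(\alpha)=\dim\Lambda$ by a contradiction argument that combines the escape-of-mass estimate for countable Markov shifts with the sign hypothesis on $\int\phi\,d\mu_{SRB}$.

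First, hypothesis~(1) forces $\underline{\alpha}=\overline{\alpha}=0$, hence $E=\{0\}$, and Lemma~\ref{partition} yields $0\in E\subset J_2$ and $b(0)=\dim\Lambda$. It remains to show that $\limsup_{\alpha\to 0}b(\alpha)<\dim\Lambda$.

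Argue by contradiction: assume there exists $\alpha_n\to 0$ with $b(\alpha_n)\to\dim\Lambda$. Since $\alpha_n\neq 0$ we have $\alpha_n\in U$, so Theorem~\ref{main} supplies $T$-invariant probability measures $\mu_n$ with $\int\phi\,d\mu_n/\int\psi\,d\mu_n=\alpha_n$ and $h(\mu_n)/\lambda(\mu_n)\to\dim\Lambda$. The key step is to show that every weak$^*$ subsequential limit of $(\mu_n)$ must be the unique measure of maximal dimension $\mu_{SRB}$, in a sense strong enough to pass to the limit in the ratios. Because $\dim\Lambda>s_\infty$ by hypothesis~(3), the escape-of-mass estimate for countable Markov shifts (in the spirit of \cite[Section~7]{ij} and \cite{GR}) shows that any loss of mass in a weak$^*$ limit would drag $h(\mu_n)/\lambda(\mu_n)$ below $\dim\Lambda$, contradicting our assumption. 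Lemma~\ref{bounded2} further rules out $\lambda(\mu_n)\to\infty$, since this would force $\int\psi\,d\mu_n$ to grow linearly with $\lambda(\mu_n)$, concentrating mass on high-index branches and triggering the same escape scenario. Hence $(\mu_n)$ is tight with bounded Lyapunov exponents, and by upper semicontinuity of $h/\lambda$ together with uniqueness of $\mu_{SRB}$ (another consequence of $\dim\Lambda>s_\infty$), every subsequential weak$^*$ limit is $\mu_{SRB}$.

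To convert this convergence into convergence of the ratios, split each integral over a compact set $K_r=\Lambda\cap[r,1]$ and its complement $[0,r)$. On $K_r$ the potentials $\phi$ and $\psi$ are bounded, so weak$^*$ convergence gives $\int_{K_r}\phi\,d\mu_n\to\int_{K_r}\phi\,d\mu_{SRB}$ and analogously for $\psi$. On $[0,r)$, hypothesis~(1) yields $|\phi(x)|\leq\epsilon(r)\psi(x)$ with $\epsilon(r)\to 0$, so the tail contribution to $\int\phi\,d\mu_n$ is controlled by $\epsilon(r)\int\psi\,d\mu_n$. Using the identity $\int\phi\,d\mu_n=\alpha_n\int\psi\,d\mu_n$ with $\alpha_n\to 0$ and taking $n\to\infty$ followed by $r\to 0$, one concludes $\int\phi\,d\mu_{SRB}=0$, contradicting hypothesis~(4). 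The main obstacle is the unboundedness of $\phi$ and $\psi$; the combination of hypothesis~(1) (controlling $\phi/\psi$ near $0$), hypothesis~(3) (providing the $s_\infty$-gap for the escape estimate), and Lemma~\ref{bounded2} (ruling out unbounded Lyapunov exponents) is precisely what is needed to push both the escape analysis and the tail estimate on $\int\phi$ through.
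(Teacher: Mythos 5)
Your overall strategy agrees with the paper's: produce measures $\mu_n$ witnessing the conditional variational principle, show their weak$^*$ limit must be $\mu_{SRB}$, and derive a contradiction with hypothesis~(4). However, two points need repair.

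First, the statement ``it remains to show $\limsup_{\alpha\to 0}b(\alpha)<\dim\Lambda$'' is wrong as written. Hypothesis~(4) puts $\alpha_*=\int\phi\,d\mu_{SRB}/\int\psi\,d\mu_{SRB}>0$, so $J_2=[0,\alpha_*]$ and $b(\alpha)=\dim\Lambda$ for all $\alpha\in[0,\alpha_*]$. Thus the two-sided $\limsup$ is $\dim\Lambda$ and your ``negation'' — the existence of $\alpha_n\to 0$ with $b(\alpha_n)\to\dim\Lambda$ — is trivially satisfied by any $\alpha_n\in(0,\alpha_*]$. The discontinuity is \emph{one-sided}: you must show $\limsup_{\alpha\to 0^-}b(\alpha)<\dim\Lambda$, and accordingly assume a sequence $\alpha_n\in(\alpha_m,0)$. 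This is not a cosmetic omission: the sign $\alpha_n<0$, hence $\int\phi\,d\mu_n<0$, is the ingredient that ultimately produces the contradiction in the paper's proof (via semi-continuity of $\mu\mapsto\int\phi\,d\mu$, giving $\int\phi\,d\nu\le 0$ against $\int\phi\,d\mu_{SRB}>0$).

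Second, your final step — showing $\int\phi\,d\mu_{SRB}=0$ by a compact-set/tail decomposition — has a genuine gap. The tail bound $\bigl|\int_{[0,r)}\phi\,d\mu_n\bigr|\le\epsilon(r)\int\psi\,d\mu_n$ only helps if $\sup_n\int\psi\,d\mu_n<\infty$. But Lemma~\ref{bounded} gives a uniform bound on $\int\psi\,d\mu$ only on balls around a \emph{fixed} $\alpha\in U$, and the bound deteriorates as $\alpha\to\underline{\alpha}=\overline{\alpha}=0$ (the radius $\gamma$ shrinks to $0$). Likewise, $\lim_{x\to 0}\psi(x)/\log|T'(x)|=\infty$ means bounded $\lambda(\mu_n)$ does \emph{not} force bounded $\int\psi\,d\mu_n$; in fact $\int\psi\,d\mu_n\to\infty$ is exactly the interesting scenario near $\alpha=0$. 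Since you take $n\to\infty$ before $r\to 0$, the term $\epsilon(r)\int\psi\,d\mu_n$ need not vanish, and the argument stalls. The fix is to drop the two-sided equality entirely: with $\alpha_n<0$ you have $\int\phi\,d\mu_n<0$, and the lower semi-continuity of $\mu\mapsto\int\phi\,d\mu$ for potentials bounded below (the paper cites \cite[Lemma~1]{JMU}) directly yields $\int\phi\,d\nu\le 0$ for the weak$^*$ limit $\nu$, which is incompatible with $\nu=\mu_{SRB}$ and $\int\phi\,d\mu_{SRB}>0$. No tail decomposition is needed.

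A smaller remark: your justification for ruling out escape of mass and $\lambda(\mu_n)\to\infty$ is gestured at but not pinned down; the paper delegates this to \cite[Proposition~6.1]{fjlr}, which uses $\dim\Lambda>s_\infty$ to guarantee a weak$^*$ limit $\nu$ with $h(\nu)/\lambda(\nu)=\dim\Lambda$ and hence $\nu=\mu_{SRB}$ by uniqueness of the measure of maximal dimension. You should cite or reprove something to this effect rather than reasoning heuristically about ``escape scenarios.''
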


\begin{proof}
To start we let $\alpha_*=\frac{\int\phi\text{d}\mu_{\text{SRB}}}{\int\psi\text{d}\mu_{\text{SRB}}}$ and note that we have under the notation of Theorem \ref{analytic} that $J_1=[\alpha_m,0]$, $J_2=[0,\alpha_*]$ and $J_3=[\alpha_*,\alpha_M]$. It also follows from Theorem \ref{analytic} that $\dim J(0)=\dim\Lambda$. Assume by way of contradiction that the function  $\alpha \mapsto \dim (J(\alpha))$ is continuous at $\alpha=0$. In particular it is continuous from the left. That is 
$$\lim_{\alpha \to 0^{-} }  \dim J(\alpha)= \dim \Lambda$$
and thus there exists a sequence $(\alpha_n)_n$ such that for every $n \in N$ we have $\alpha_n  \in (\alpha_m, 0)$ and
$$\lim_{n \to \infty} \dim J(\alpha_n) =\dim  \Lambda.$$
In virtue of the variational principle there exists a sequence of $T-$invariant measures $(\mu_n)_n$ such that 
\begin{eqnarray*}
\left( \frac{\int \phi \text{d} \mu_n}{\int \psi \text{d} \mu_n} -  \alpha_n \right) \leq \frac{1}{n}  \textrm{ and } \left( \frac{h(\mu_n)}{\lambda(\mu_n)} -  \dim J(\alpha_n) \right) \leq \frac{1}{n}
\end{eqnarray*}
In particular, for each $n \in \N$ we have $\int\phi\text{d}\mu_n<0$, $\lim_{n\to\infty}\frac{\int \phi\text{d}\mu_n}{\int\psi\text{d}\mu_n}=0$ and $\lim_{n\to\infty}\frac{h(\mu_n)}{\lambda(\mu_n)}=\dim\Lambda$.
By \cite[Proposition 6.1]{fjlr} such a sequence will have a weak * limit $\nu$ such that $\frac{h(\nu)}{\lambda(\nu)}=\dim\Lambda$. Moreover via the semi-continuity of the map $\mu\to\int\phi\text{d}\mu$ (\cite[Lemma 1]{JMU}) we have that $\int\phi\text{d}\nu\leq 0$. Thus $\nu$ is an equilibrium state for the potential $-(\dim\Lambda)\log|T'|$ and since this equilibrium state is unique we must have $\nu=\mu_{\text{SRB}}$. However, $\int\phi\text{d}\mu_{\text{SRB}}>0$ and $\int\phi\text{d}\nu\leq 0$ which is obviously a contradiction. Therefore there exists no such sequence of measures and $\alpha\to \dim J(\alpha)$ is discontinuous at $\alpha=0$.
%
\end{proof}

\subsection{Manneville-Pomeau}
In this subsection we discuss an example that illustrates how our results Theorem \ref{analytic} and Proposition \ref{discont} can be applied to certain classes of non-uniformly expanding maps.
In \cite{jjop}  the authors proved a variational principle for Birkhoff averages for continuous potentials and certain non-uniformly expanding maps. A particular case of this is the Manneville-Pomeau map, $F:[0,1] \mapsto [0,1]$,  which is the map defined by $F(x)= x +x ^{1+ \beta} \mod 1$, where $0 < \beta < 1$. This map has an indifferent fixed point at $x=0$ and it has an absolutely continuous (with respect to Lebesgue)  invariant probability measure. We will denote $A=\cup_{n=0}^{\infty}F^{-n}(\{0\})$ and let $0<t<1$ satisfy $t+t^{1+\beta}=1$.  We define a partition $\mathcal{P}_1=\{[0,t],[t,1]\}$ and $\mathcal{P}_n=\bigvee_{0}^{n-1}T^{-n}\mathcal{P}_1$. For a function $f:[0,1]\to\R$ let
$$\text{var}_n(f)=\sup_{P\in\mathcal{P}_n}\sup\{|f(x)-f(y)|:x,y\in P\}.$$
We will assume that there exists $A>0$ and $0<\theta<1$ such that $\text{var}_n(f)\leq A\theta^n$ for ann $n\in\N$, $f(0)=0$, that $f$ is non-negative in a neighbourhood of $0$,  let
$$\alpha_m=\inf\left\{\int f\text{d}\mu:\mu\text{ is }F\text{-invariant}\right\}$$
and
$$\alpha_{M}=\sup\left\{\int f\text{d}\mu:\mu\text{ is }F\text{-invariant}\right\}.$$
We define,
\[J(\alpha)= \left\{ x \in [0,1] : \lim_{n \to \infty} \frac{1}{n} \sum_{i=0}^{n-1} f(F^i x)  = \alpha  \right\}.\]
We stress that in \cite{jjop} the Hausdorff dimension of the level sets, $\dim J(\alpha)$, is found for a more general class of functions. The result is that if $\alpha \in [\alpha_m, \alpha_M] \setminus 0$   then
\[\dim J(\alpha)= \sup \left\{ \frac{h(\mu)}{\lambda(\mu)} : \mu \in \mathcal{M}_F, \int f \ d \mu = \alpha \right\} \]
and it is also shown that  $\dim J(0)=1$.

With these stronger assumptions on our function $f$ we can use our results from Theorem \ref{analytic} to say more about the function $\alpha\to\dim J(\alpha)$. It is well known that $F$ can be related to a countable EMR map $T$.
 $$n(x)=\left\{\begin{array}{lll}1&\text{ if }&x\in [t,1]\\
 \inf\{n:F^n(x)\in I\}+1&\text{ if }&x\notin [t,1]\end{array}\right.$$
  and $T(x)=F^{n(x)}(x)$. Note that $T$ is an EMR map and we have that $\Lambda=[0,1]\backslash A$ and since $A$ is a countable set $\dim\Lambda=1$. We can also calculate $s_{\infty}=\frac{\beta}{\beta+1}$ (indeed see the proof of \cite[Proposition 1]{sar}). We can define $\phi(x)=\sum_{i=0}^{n(x)-1} f(F^i x)$ and $\psi(x)=r^n(x)$ and note that $\phi(x)\in\mathcal{R}$ and $\psi(x)\in\mathcal{R}_1$. For $\alpha\in \R $ let
$$X_{\alpha}=\left\{x\in [0,1]\backslash A:\lim_{n\to\infty}\frac{S_n\phi(x)}{S_n\psi(x)}=\alpha\right\}.$$

\begin{prop}
$\dim J(\alpha)=\dim X_{\alpha}$.
\end{prop}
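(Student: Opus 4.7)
My plan is to establish the two inequalities $\dim J(\alpha)\le\dim X_\alpha$ and $\dim J(\alpha)\ge\dim X_\alpha$ separately. The first is a direct set-containment argument, while the second relies on an Abramov--Kac correspondence between $T$-invariant and $F$-invariant measures, coupled with the variational principle from Theorem~\ref{main} and the results of \cite{jjop}.

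For the upper bound, since $A=[0,1]\setminus\Lambda=\bigcup_n F^{-n}(\{0\})$ is countable, $\dim J(\alpha)=\dim(J(\alpha)\cap\Lambda)$. For $x\in\Lambda$, write $m_k(x):=S_k\psi(x)=\sum_{i=0}^{k-1}n(T^ix)$, the total number of $F$-iterates accumulated by $k$ iterates of $T$. Telescoping the identity $T(x)=F^{n(x)}(x)$ gives $S_k\phi(x)=\sum_{j=0}^{m_k-1}f(F^jx)$, so that
$$\frac{S_k\phi(x)}{S_k\psi(x)}=\frac{1}{m_k}\sum_{j=0}^{m_k-1}f(F^jx).$$
Since $n(x)\ge 1$, $m_k\to\infty$, and the right-hand side is a subsequence of the $F$-Birkhoff average of $f$. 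Convergence along the full sequence therefore forces convergence along this subsequence, so $J(\alpha)\cap\Lambda\subseteq X_\alpha$, which yields $\dim J(\alpha)\le\dim X_\alpha$.

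For the lower bound, I would exploit Theorem~\ref{main}. For each $\mu\in\tilde{\mathcal{M}}(T)$ with $\int\phi\,d\mu/\int\psi\,d\mu=\alpha$ and $\int\psi\,d\mu<\infty$, pass to the associated $F$-invariant probability measure $\nu$ via the standard Abramov inducing construction. The Abramov--Kac formulas then give
\[ h(\nu)=\frac{h(\mu)}{\int\psi\,d\mu},\quad \lambda_F(\nu)=\frac{\lambda_T(\mu)}{\int\psi\,d\mu},\quad \int f\,d\nu=\frac{\int\phi\,d\mu}{\int\psi\,d\mu}=\alpha, \]
so $h(\nu)/\lambda_F(\nu)=h(\mu)/\lambda_T(\mu)$ and $\nu$ satisfies the $F$-side averaging condition. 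Because $\nu$ inherits ergodicity from $\mu$, the dimension theory for ergodic measures on Manneville--Pomeau maps (the local-dimension formula $\dim\nu=h(\nu)/\lambda_F(\nu)$, as used in \cite{jjop}) forces $\dim J(\alpha)\ge h(\nu)/\lambda_F(\nu)$. Taking the supremum over such $\mu$ yields $\dim J(\alpha)\ge\dim X_\alpha$ for $\alpha\in U$.

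The main obstacle is the regime $\alpha\in E$, where the supremum in Theorem~\ref{main} may only be realised through measures with $\int\psi\,d\mu=\infty$, outside the reach of the Abramov construction. To handle this I would adapt the approximation argument of Lemma~\ref{seqermeas} and Proposition~\ref{wmeasure}: approximate $\dim X_\alpha$ by a sequence of $T$-ergodic measures of finite $\psi$-integral whose quotients tend to $\alpha$ (via convex combinations with an auxiliary measure, as in the proof of Lemma~\ref{continuity}), lift each one through Abramov to obtain $F$-ergodic measures $\nu_n$ with $\int f\,d\nu_n\to\alpha$ and $h(\nu_n)/\lambda_F(\nu_n)\to\dim X_\alpha$, and then invoke the $F$-side variational principle of \cite{jjop} to conclude.
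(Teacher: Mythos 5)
Your upper-bound inclusion $J(\alpha)\cap\Lambda\subseteq X_\alpha$ is exactly the paper's first observation. For the reverse inequality, however, the paper uses a much lighter argument than your Abramov--Kac route. For $\alpha\neq 0$ (hence $\alpha\in U$, since here $E=\{0\}$), a pointwise analogue of Lemma~\ref{bounded} shows that $\limsup_n S_n\psi(x)/n$ is uniformly bounded on $X_\alpha$; from this one deduces $S_{n+1}\psi(x)/S_n\psi(x)\to 1$, and then, writing $r_n=S_n\psi(x)$ and sandwiching the $F$-Birkhoff average at times $k\in[r_n,r_{n+1}]$ between the $r_n$- and $r_{n+1}$-averages (using that $f\geq 0$ near the fixed point), one obtains $X_\alpha\subseteq J(\alpha)$ as a genuine set inclusion. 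For $\alpha=0$ the paper simply notes that $\dim J(0)=1$ by \cite{jjop}, so $\dim X_0\le 1=\dim J(0)$ is automatic, and the countability of $A$ takes care of the other inequality. No measures are lifted and Theorem~\ref{main} is never invoked in this proof.

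Your Abramov--Kac scheme for $\alpha\in U$ is correct in spirit: the entropy, Lyapunov-exponent, and Kac identities you list do give $\dim\nu=h(\mu)/\lambda(\mu)$ for an ergodic induced measure $\mu$, though you would need to make explicit the reduction to ergodic $\mu$ when taking the supremum from Theorem~\ref{main}. The genuine gap is your proposed treatment of $\alpha\in E$. After lifting approximating measures to $F$-ergodic $\nu_n$ with $\int f\,d\nu_n\to\alpha$ but $\ne\alpha$, the variational principle of \cite{jjop} --- which requires $\int f\,d\nu=\alpha$ exactly --- does not allow you to conclude $\dim J(\alpha)\ge\lim_n h(\nu_n)/\lambda_F(\nu_n)$: that step implicitly assumes continuity of the $F$-side Birkhoff spectrum at $\alpha$, and the entire point of Theorem~\ref{MPmap} is that this spectrum is discontinuous at $\alpha=0$. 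The desired conclusion at $\alpha=0$ holds for the trivial reason just noted ($\dim J(0)=1$ dominates everything), but your proposed route would not deliver it.
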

\begin{proof}
It is immediate that $J(\alpha)\subseteq X_{\alpha}$ for all $\alpha$ except for $\alpha=0$. We also have that $J(0)\subset X_0\cup A$ and since $A$ is a countable set it follows that $\dim X_0\geq\dim J(0)=1$. Thus $\dim X_{0}=\dim J(0)=1$.

For the case when $\alpha\neq 0$ we first note that this implies that $\alpha\in U$. Thus there exists $C>0$ such that for all $x\in X_{\alpha}$ we have that $\limsup_{n\to\infty}\frac{S_n\psi(x)}{n}<C$. We now let $x\in X_{\alpha}$ and note that $\lim_{n\to\infty}\frac{S_{n+1}\psi(x)}{S_n\psi(x)}=1$. Thus, if we let $r_n=S_n\psi(x)$ we have that for $r_n\leq k\leq r_{n+1}$
$$\frac{\sum_{i=0}^{k-1}f(F^ix)}{k}\leq \frac{\sum_{i=0}^{r_{n+1}} f(F^ix)}{r_{n+1}}\frac{r_{n+1}}{r_n}$$
and by taking limits as $n\to\infty$ we can see that $x\in J(\alpha)$ and so the proof is complete.
\end{proof}

This means that we can apply Theorem \ref{analytic} to obtain more information about the function $\alpha\to\dim J(\alpha)$.

\begin{teo}\label{MPmap}
If $\alpha_m\leq 0=f(0)\leq \alpha_*=\int f\text{d}\mu_{SRB}<\alpha_M$ then
\begin{enumerate}
\item
The function $\alpha\to\dim J(\alpha)$ is analytic on $(\alpha_m,0)$ and $(\alpha_*,\alpha_M)$.
\item
$\dim J(\alpha)=1$ for $\alpha\in [0,\alpha_*]$
\item
If $\alpha_m<0$ then $\alpha\to\dim J(\alpha)$ is discontinuous at $\alpha=0$.
\end{enumerate}
\end{teo}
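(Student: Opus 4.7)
The plan is to transport everything to the induced EMR map $T$ via the preceding proposition ($\dim J(\alpha)=\dim X_\alpha$) and then invoke Theorem~\ref{analytic} and Proposition~\ref{discont}. First I would verify the hypotheses of Theorem~\ref{analytic} for $(T,\phi,\psi)$: it has already been observed that $\phi\in\mathcal{R}$ and $\psi\in\mathcal{R}_1$; the growth condition $\lim_{x\to 0}\psi(x)/\log|T'(x)|=\infty$ follows from the standard Manneville--Pomeau estimate $\log|T'(x)|=O(\log n(x))$ near the indifferent fixed point, while $\psi(x)=n(x)\to\infty$; and $\alpha_M<\infty$ because $\phi/\psi$ is a Birkhoff average of the bounded function $f$, hence bounded by $\|f\|_\infty$.

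Next I would pin down $E$. Since $f(0)=0$ and the variation bound forces $f$ continuous at $0$, for any sequence $x_n\to 0$ the orbit $\{F^i(x_n)\}_{0\le i<n(x_n)}$ spends most of its time very close to $0$, so $\phi(x_n)/\psi(x_n)\to 0$; hence $E=\{0\}$. Invoking Theorem~\ref{analytic} then produces intervals $J_1,J_2,J_3$ with $0\in E\subset J_2$. Since $\dim\Lambda=1>s_\infty=\beta/(\beta+1)$, the equilibrium state $\mu_{\mathrm{SRB}}$ for $-\log|T'|$ is the unique measure of full dimension, realizes quotient $\alpha_*$, and so places $\alpha_*\in J_2$; convexity of $J_2$ then yields $[0,\alpha_*]\subset J_2$.

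For the reverse inclusion, suppose $\alpha\in J_2$ with $\alpha<0$ (the case $\alpha>\alpha_*$ is analogous). A sequence of invariant measures $\mu_n$ with quotient tending to $\alpha$ and $h(\mu_n)/\lambda(\mu_n)\to 1$ would, by the tightness result \cite[Prop.~6.1]{fjlr}, admit a weak-$*$ limit $\nu$ with $h(\nu)/\lambda(\nu)=1$; uniqueness forces $\nu=\mu_{\mathrm{SRB}}$, while upper semicontinuity of $\mu\mapsto\int\phi\,\text{d}\mu$ (\cite[Lemma~1]{JMU}) yields $\int\phi\,\text{d}\mu_{\mathrm{SRB}}\le\alpha\int\psi\,\text{d}\mu_{\mathrm{SRB}}<0$, contradicting $\alpha_*\ge 0$. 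Hence $J_2=[0,\alpha_*]$, $J_1=(\alpha_m,0)$, $J_3=(\alpha_*,\alpha_M)$, and parts (1) and (2) follow directly from Theorem~\ref{analytic}.

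Finally, for part (3) I would apply Proposition~\ref{discont}. Its hypotheses are quick to check: (i) $\lim_{x\to 0}\phi(x)/\psi(x)=0$ from above; (ii) $\alpha_m<0$ supplies an invariant $\mu$ with $\int\phi\,\text{d}\mu<0$; (iii) $\dim\Lambda=1>s_\infty$; (iv) $\int\phi\,\text{d}\mu_{\mathrm{SRB}}>0$ in the generic case $\alpha_*>0$, the degenerate case $\alpha_*=0$ being handled by the same semicontinuity argument as in the previous paragraph. The main obstacle I anticipate is the justification of $J_2=[0,\alpha_*]$: the non-compactness of the space of $T$-invariant probability measures makes the weak-$*$ convergence step delicate, and all other parts of the argument are essentially a translation via $\dim J(\alpha)=\dim X_\alpha$.
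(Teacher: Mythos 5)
Your proposal follows the same route as the paper---transport to the induced EMR map, verify the hypotheses of Theorem~\ref{analytic}, use the SRB/absolutely-continuous measure to locate $J_2$, and invoke Proposition~\ref{discont} for the discontinuity---so the overall strategy matches. Where you add genuine value is in the reverse inclusion $J_2\subset[0,\alpha_*]$: the paper's proof of Theorem~\ref{MPmap} asserts $J_2=(0,\alpha_*)$ (and $J_1=(\alpha_m,0)$, $J_3=(\alpha_*,\alpha_M)$) with essentially no justification beyond identifying $E=\{0\}$ and noting that $\mu_{\mathrm{SRB}}$ realizes $\alpha_*$, whereas Lemma~\ref{partition} only gives $E\cup\{\alpha_*\}\subset J_2$; your weak-$*$ compactness and upper-semicontinuity argument is exactly what is needed to rule out $\alpha\in J_2$ for $\alpha<0$ (and $\alpha>\alpha_*$), which in turn is required to conclude analyticity on $(\alpha_m,0)$ and $(\alpha_*,\alpha_M)$.

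One claim you should be more careful about: you assert that the degenerate case $\alpha_*=0$ in part~(3) is ``handled by the same semicontinuity argument,'' but it is not. That argument runs with a fixed $\alpha<0$, producing $\int\phi\,\text{d}\nu\le\alpha\eta<0$ and a clean contradiction; it shows $J_2\cap(\alpha_m,0)=\emptyset$, not that $\lim_{\alpha\to 0^-}\dim J(\alpha)<1$. To get the discontinuity you must take measures whose quotients tend to $0$, and then the bound degenerates to $\int\phi\,\text{d}\nu\le 0$, which is compatible with $\alpha_*=0$. Hence Proposition~\ref{discont} genuinely needs $\int\phi\,\text{d}\mu_{\mathrm{SRB}}>0$, i.e.\ $\alpha_*>0$. (The paper's own proof of Theorem~\ref{MPmap}, which simply says the hypotheses of Proposition~\ref{discont} are met, has the same implicit restriction; so this is a shared issue rather than a flaw specific to your argument, but you should not claim to have patched it when you have not.)
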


\begin{proof}
To prove this Theorem we first note that the potentials $\phi,\psi$ satisfy the assumptions for Theorem \ref{analytic}. Note that the absolutely continuous measure for $T$ projects to the absolutely continuous measure for $F$. Thus, in Theorem \ref{analytic} we have that $J_3=(\alpha_*,\alpha_M)$. We can determine that if $\lim_{n\to\infty}\frac{S_n\Psi(x)}{n}=\infty$ then $\lim_{n\to\infty}\frac{S_n\phi(x)}{S_n\psi(x)}=0$ and so $U=0$. Thus $J_2=(0,\alpha_*)$ and we can conclude that $J_3=(\alpha_m,0)$. The first two parts of the Theorem now immediately follow from Theorem \ref{analytic} and the final part follows since $s_{\infty}<1$ and thus the assumptions for Proposition \ref{discont} are met.
\end{proof}

\begin{rem}
We would be able to proof analogous results if $\alpha_*<0$ and $f$ is negative in a neighbourhood of $0$. The theorem would hold with weaker assumption on the function $f$. What we need is that $\phi$ is locally H\"{o}lder.
\end{rem}

\section{Multifractal Analysis for suspension flows}
Let $T$ be an EMR map and $\tau:(0,1] \to \R$ a positive function in $\mathcal{R}_{\eta}$. We consider the space
\begin{equation*}
Y:= \{ (x,t)\in (0,1]  \times \R \colon 0 \le t \le\tau(x)\},
\end{equation*}
with the points $(x,\tau(x))$ and $(T(x),0)$ identified for
each $x\in (0,1] $. The \emph{suspension semi-flow} over $T$
with \emph{roof function} $\tau$ is the semi-flow $\Phi = (
\varphi_t)_{t \ge 0}$ on $Y$ defined by
\[
 \varphi_t(x,s)= (x,
s+t) \ \text{whenever $s+t\in[0,\tau(x)]$.}
\]
In particular,
\[
 \varphi_{\tau(x)}(x,0)= (T(x),0).
\]
Because of the Markov structure of $T$ the flow $\Phi$ can be coded with a suspension semi-flow over a Markov shift defined on a countable alphabet.
Let $g:Y \to \R$ be  a potential and define
\begin{equation*}
K(\alpha):= \left\{(x,r) \in Y :\lim_{t \to \infty} \frac{1}{t} \int_0^t g(\varphi_s (x,r)) \text{d}s  =\alpha \right\}.
\end{equation*}
We define the \emph{Birkhoff spectrum} of $g$ by
\begin{equation*}
B(\alpha):= \dim (K(\alpha)).
\end{equation*}
It turns out that the results obtained to study multifractal analysis for quotients allow us to study the Birkhoff spectrum for flows.

\begin{rem}[Invariant measures]
We denote by $\M_\Phi$ the space of $\Phi$-invariant probability measures on $Y$. Recall that a measure $\mu$ on $Y$ is
\emph{$\Phi$-invariant} if $\mu(\varphi_t^{-1}A)= \mu(A)$ for every
$t \ge 0$ and every measurable set $A \subset Y$. Consider as well
the space $\M(T)$ of $T$-invariant probability measures on
$(0,1] $ and
\begin{equation*}
\M(T)(\tau):= \left\{ \mu \in \mathcal{M}(T): \int \tau \text{d} \mu < \infty \right\}.
\end{equation*}
Denote by $m$ the one dimensional Lebesgue measure and let $\mu \in \M(T)(\tau)$ then  it follows directly from classical results by Ambrose and Kakutani \cite{ak} that
\[(\mu \times m)|_{Y} /(\mu \times m)(Y) \in \M_{\Phi}.\]
Moreover,  If $\tau: (0,1] \to \R$ is bounded away from zero then there is a bijection between the spaces $\M_{\Phi}$ and $\M(T)(\tau)$.
\end{rem}

\begin{rem}[Kac's formula]
Given a continuous function $g \colon Y\to\R$ we define the function
$\Delta_g\colon (0,1] \to\R$~by
\[
\Delta_g(x)=\int_{0}^{\tau(x)} g(x,t) \, \text{d}t.
\]
The function $\Delta_g$ is also continuous, moreover
\begin{equation} \label{rela}
\int_{Y} g \, \text{d}R(\nu)= \frac{\int_\Sigma \Delta_g\, \text{d}
\nu}{\int_\Sigma\tau \, \text{d} \nu}.
\end{equation}
\end{rem}

\begin{rem}[Abramov's formula]
The entropy of a flow with respect to an invariant measure can be defined by  the entropy of the corresponding time one map.  Abramov \cite{a} and later Savchenko \cite{sav} proved that if
$\mu \in \M_{\Phi}$ is such that  $\mu=(\nu \times m)|_{Y} /(\nu \times m)(Y)$, where $\nu \in \M(T)$ then
\begin{equation}
h_{\Phi}(\mu)=\frac{h_{\sigma}(\nu)}{\int \tau \text{d} \nu}.
\end{equation}
\end{rem}
The following Lemma establishes a relation between the level sets determined by Birkhoff averages for the flow and the level sets of quotients for the map $T$.
\begin{lema} \label{igual1}
If
\begin{equation*}
\lim_{t \to \infty} \frac{1}{t} \int_0^t g(\varphi_s (x,r)) \text{d}s  =\alpha,
\end{equation*}
then
\begin{equation*}
\lim_{n \to \infty} \frac{\sum_{i=0}^{n} \Delta_g(T^{i} x)}{\sum_{i=0}^{n} \tau(T^{i} x)}= \alpha.
\end{equation*}
\end{lema}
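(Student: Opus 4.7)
The plan is to exploit directly the definition of the suspension semi-flow: the integral of $g$ along a flow orbit over one ``roof return'' equals $\Delta_g$ at the base point, so evaluating the flow Birkhoff average at the sequence of return times recovers precisely the Birkhoff ratio on the base. The argument naturally splits into two steps.

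First, I would reduce the statement to the case $r=0$. Since for any $s\geq 0$ and $r\in[0,\tau(x)]$ one has $\varphi_s(x,r)=\varphi_{s+r}(x,0)$, a change of variables gives
\[
\int_0^t g(\varphi_s(x,r))\,\text{d}s \;=\; \int_0^{t+r} g(\varphi_s(x,0))\,\text{d}s \;-\; \int_0^r g(\varphi_s(x,0))\,\text{d}s.
\]
The last integral is a finite constant depending only on $(x,r)$, since $g$ is continuous on the compact fibre $\{x\}\times[0,\tau(x)]$. Dividing through by $t$ and letting $t\to\infty$ shows that the flow Birkhoff average starting from $(x,0)$ also converges to $\alpha$.

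Next I would introduce the sequence of return times $T_n:=\sum_{i=0}^{n-1}\tau(T^{i}x)$. Because $\tau\in\mathcal{R}_{\eta}$ implies $\tau\geq\eta>0$, we have $T_n\geq n\eta\to\infty$, so $T_n$ is an admissible sequence on which to evaluate the hypothesised limit. Decomposing the orbit of $(x,0)$ one fundamental domain at a time, directly from the definition of $\varphi_t$, gives
\[
\int_0^{T_n} g(\varphi_s(x,0))\,\text{d}s \;=\; \sum_{i=0}^{n-1}\int_0^{\tau(T^{i}x)} g(T^{i}x,u)\,\text{d}u \;=\; \sum_{i=0}^{n-1}\Delta_g(T^{i}x).
\]
Evaluating the flow Birkhoff average along the subsequence $t=T_n$ therefore yields
\[
\frac{\sum_{i=0}^{n-1}\Delta_g(T^{i}x)}{\sum_{i=0}^{n-1}\tau(T^{i}x)} \;=\; \frac{1}{T_n}\int_0^{T_n} g(\varphi_s(x,0))\,\text{d}s \;\longrightarrow\; \alpha,
\]
which, up to a trivial reindexing, is the limit claimed in the lemma.

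There is essentially no obstacle here; the statement is really a change of variables followed by extraction of a particular subsequence of the convergence hypothesis. The only mild subtlety is the reduction from a generic fibre coordinate $r$ to $r=0$, where one has to verify that the boundary term $\int_0^r g(\varphi_s(x,0))\,\text{d}s$ is $O(1)$ as $t\to\infty$, and this is immediate from continuity of $g$ on the compact set $\{x\}\times[0,\tau(x)]$.
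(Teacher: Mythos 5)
Your proof is correct and takes essentially the same route as the paper's: split the flow integral into fundamental domains of the return map so that each piece equals $\Delta_g(T^i x)$, then evaluate the Birkhoff-average hypothesis along the subsequence of return times $\tau_m(x)=\sum_{i=0}^{m-1}\tau(T^ix)$. Your explicit reduction from arbitrary fibre coordinate $r$ to $r=0$ before decomposing is a mild improvement in precision --- the paper writes the telescoping identity directly for general $r$, implicitly leaving to the reader the $O(1)$ boundary term that you spell out.
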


\begin{proof}
Denote by
\[\tau_m(x):=\sum_{i=0}^{m-1}\tau(T^i x).\]
We have that
\begin{eqnarray*}
\int_0^{\tau_m(x)} g(\varphi_s(x,r)) \ d s= \sum_{i=0}^{m-1}\int_{\tau_i(x)}^{\tau_{i+1}(x)} g(\varphi_s(x,r) \ d s &=& \\ \sum_{i=0}^{m-1} \int_0^{\tau(T^ix)} g(\varphi_s(x,r) \ d s =
\sum_{i=0}^{m-1} \Delta_g(T^i x).
\end{eqnarray*}
In particular if
\begin{equation*}
\lim_{t \to \infty} \frac{1}{t} \int_0^t g(\varphi_s (x,r)) \text{d}s  =\alpha,
\end{equation*}
since $t \to \infty$ implies that $m \to \infty$,  we have that
\begin{equation*}
\lim_{m \to \infty} \frac{1}{\tau_m(x)} \sum_{i=0}^{m-1} \Delta_g(T^i(x)) =
\lim_{m \to \infty} \frac{\sum_{i=0}^{m-1} \Delta_g(T^i(x)) }{\sum_{i=0}^{m-1} \tau(T^i(x))}= \alpha.
\end{equation*}
\end{proof}

Let
\begin{equation*}
J(\alpha):= \left\{ x \in (0,1] :\lim_{n \to \infty} \frac{\sum_{i=0}^{n} \Delta_g(T^{i} x)}{\sum_{i=0}^{n} \tau(T^{i} x)} = \alpha \right\}.
\end{equation*}

Our main results establishes that we can compute the Hausdorff dimension of $K(\alpha)$ once we know the Hausdorff dimension of $J(\alpha)$.

\begin{teo} \label{mfa-flow}
Let $\alpha\in \R$ be such that $K(\alpha)\neq \emptyset$, then
\begin{equation*}
\dim K(\alpha) = \dim  J(\alpha) + 1.
\end{equation*}\end{teo}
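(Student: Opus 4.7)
The plan is to prove both bounds separately, using Lemma \ref{igual1} as the bridge between flow Birkhoff averages and quotient Birkhoff ratios on the base. Throughout I take $\phi=\Delta_g$ and $\psi=\tau$, so that the level set $J(\alpha)$ appearing in the statement coincides with the level set studied in Sections 1--4. Lemma \ref{igual1} immediately gives the inclusion
\[
K(\alpha)\subset\{(x,r)\in Y: x\in J(\alpha)\}.
\]

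For the upper bound, I would write $Y=\bigcup_{n\in\N} Y_n$ with $Y_n=\{(x,s)\in Y: x\in \Lambda_n\}$; on each $Y_n$ the roof is bounded by some $M_n<\infty$. A direct covering argument then lifts any $\delta$-cover of $J(\alpha)\cap\Lambda_n$ by intervals to a cover of $K(\alpha)\cap Y_n$ by rectangles of comparable diameter, with at most $\lceil M_n/\delta\rceil$ rectangles above each base-interval. This yields $\dim(K(\alpha)\cap Y_n)\le \dim(J(\alpha)\cap\Lambda_n)+1\le \dim J(\alpha)+1$, and the countable stability of Hausdorff dimension gives $\dim K(\alpha)\le \dim J(\alpha)+1$.

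For the lower bound, fix $\epsilon>0$. When $\alpha\in U$, Theorem \ref{main} together with Lemma \ref{bounded} produces an ergodic $T$-invariant measure $\mu$ with $\int\phi\,\text{d}\mu/\int\psi\,\text{d}\mu=\alpha$, $\int\tau\,\text{d}\mu<\infty$ and $h(\mu)/\lambda(\mu)>\dim J(\alpha)-\epsilon$. Lift to the flow-invariant ergodic probability $\tilde\mu=(\mu\times m)|_Y/(\mu\times m)(Y)$. Kac's formula gives $\int g\,\text{d}\tilde\mu=\int\Delta_g\,\text{d}\mu/\int\tau\,\text{d}\mu=\alpha$, so by Birkhoff's theorem for the flow $\tilde\mu(K(\alpha))=1$. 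The main computation to carry out is $\dim\tilde\mu=\dim\mu+1$: for $\tilde\mu$-a.e.\ $(x,s)$, a small Euclidean ball in $Y$ of radius $r$ projects to a base-set of diameter $\asymp r$ and has fiber length $\asymp r$, so $\tilde\mu(B((x,s),r))\asymp r\,\mu(B(x,r))/(\mu\times m)(Y)$, yielding local dimension $1+h(\mu)/\lambda(\mu)$. Hence $\dim K(\alpha)\ge \dim\tilde\mu>\dim J(\alpha)+1-\epsilon$, and letting $\epsilon\to 0$ closes the gap.

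The main obstacle is the case $\alpha\in E\cap(\alpha_m,\alpha_M)$, where no single ergodic $T$-invariant measure need realise $\dim J(\alpha)$ and along approximating sequences $(\mu_n)$ one may have $\limsup_n\int\tau\,\text{d}\mu_n=\infty$, so the recipe above for lifting to the flow breaks down. In the tame sub-case of uniformly bounded $\int\tau\,\text{d}\mu_n$, the construction above still produces ergodic flow measures $\tilde\mu_n$ with flow means converging to $\alpha$ and $\dim\tilde\mu_n\to\dim J(\alpha)+1$; a convex-combination adjustment at the flow level, analogous to the one used in Lemma \ref{continuity}, returns the mean to exactly $\alpha$ while preserving the dimension limit, and ergodic decomposition yields an ergodic component supported on $K(\alpha)$ of dimension arbitrarily close to $\dim J(\alpha)+1$. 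In the unbounded sub-case, I would transplant the construction of Proposition \ref{wmeasure} to the suspension directly: use the non-invariant measure $\nu$ on $\Lambda$ supplied by that proposition, integrate against Lebesgue on the fiber to produce a measure $\tilde\nu$ on $Y$, and rerun the local-dimension bounds inside each $Y_n$ with the extra factor $r$ from the flow direction. The hardest technical point will be this last step, since $\tilde\nu$ is non-invariant and the flow Birkhoff average along $\nu$-typical orbits must be verified by hand from the conclusions of Proposition \ref{wmeasure}.
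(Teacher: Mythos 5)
Your strategy is genuinely different from the paper's. The paper's key tool is the Barreira--Saussol result quoted as Remark \ref{cs}: for a compact $\Phi$-invariant subsystem, $(x,r)\in K(\alpha)$ if and only if the base point $x$ lies in $J(\alpha)$. Combined with a compact exhaustion $(\mathcal{C}_n)_n$ of $Y$ and the Cartesian product formula $\dim (F\times G)\le\dim F+\overline{\dim}_B G$ (Falconer, p.~94), this gives both bounds at once and with no case analysis on $\alpha$. You instead construct explicit flow measures: for $\alpha\in U$ you lift an ergodic equilibrium state via Kac/Abramov, and your local-dimension computation $\dim\tilde\mu=\dim\mu+1$ is the right picture; for $\alpha\in E$ you propose transplanting the non-invariant measure construction of Proposition \ref{wmeasure} into the suspension and verifying the flow Birkhoff average by hand. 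That last step is flagged by you as the hardest, and it is left as a plan rather than a proof --- but it is precisely the burden that the paper's route is designed to sidestep: once restricted to a compact subsystem, the Barreira--Saussol equivalence reduces the flow level set to the base level set immediately, and one never has to redo the delicate analysis inside $Y$.

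There is also a concrete error in your upper bound. You assert $Y=\bigcup_n Y_n$ with $Y_n=\{(x,s)\in Y:x\in\Lambda_n\}$, but $\Lambda_n=\pi(\Sigma_n)$ is the set of points all of whose orbit digits lie in $\{1,\dots,n\}$, so $\bigcup_n\Lambda_n$ is a proper subset of $\Lambda$: any $x$ with an unbounded digit sequence is omitted, and when $\tau$ is unbounded these are exactly the typical points. Consequently your countable-stability step bounds $\dim\bigl(K(\alpha)\cap\bigcup_n Y_n\bigr)$ rather than $\dim K(\alpha)$. The paper's upper bound needs no decomposition: by Lemma \ref{igual1}, $K(\alpha)\subset\{(x,r):x\in J(\alpha),\ 0\le r\le\tau(x)\}\subset J(\alpha)\times\R$, and the product formula yields $\dim(J(\alpha)\times\R)=\dim J(\alpha)+1$. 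If you prefer a covering argument, exhaust instead by $\{(x,s)\in Y:\tau(x)\le n\}$, which genuinely covers $Y$ because $\tau$ is finite at every point.
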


We divide the proof of this results in a couple of Lemmas. The proof of the upper bound for the dimension of $K(\alpha)$ in terms of he dimension of $J(\alpha)$ is simpler.
\begin{lema}
Let $\alpha\in \R$ be such that $K(\alpha)\neq \emptyset$, then
\begin{equation*}
\dim K(\alpha) \leq \dim \left(J(\alpha) \times \R  \right)= \dim J(\alpha) + 1.
\end{equation*}
\end{lema}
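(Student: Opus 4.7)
My plan is to reduce this inequality to a direct consequence of Lemma \ref{igual1} together with standard facts about the Hausdorff dimension of product sets.

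The first step is the containment. Fix $(x,r)\in K(\alpha)$. By definition,
\[
\lim_{t\to\infty}\frac{1}{t}\int_0^t g(\varphi_s(x,r))\,\text{d}s=\alpha,
\]
so Lemma \ref{igual1} gives
\[
\lim_{n\to\infty}\frac{\sum_{i=0}^{n}\Delta_g(T^{i}x)}{\sum_{i=0}^{n}\tau(T^{i}x)}=\alpha,
\]
i.e.\ $x\in J(\alpha)$. Since the fibre coordinate $r$ ranges over a subset of $\R$, we deduce the set-theoretic inclusion
\[
K(\alpha)\subseteq J(\alpha)\times\R.
\]
Monotonicity of Hausdorff dimension then yields $\dim K(\alpha)\leq\dim(J(\alpha)\times\R)$, which is the first inequality in the statement.

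The second step is the identity $\dim(J(\alpha)\times\R)=\dim J(\alpha)+1$. Write $\R=\bigcup_{n\in\Z}[n,n+1]$; by countable stability of Hausdorff dimension it is enough to show
\[
\dim\bigl(J(\alpha)\times[n,n+1]\bigr)=\dim J(\alpha)+1
\]
for every $n$. For this I will invoke the classical product inequality (see, e.g., Falconer's book on fractal geometry): for any $A\subset\R$ and any bounded $B\subset\R$,
\[
\dim A+\dim B\;\leq\;\dim(A\times B)\;\leq\;\dim A+\overline{\dim}_B B,
\]
where $\overline{\dim}_B$ denotes upper box dimension. Applying this with $B=[n,n+1]$, which has Hausdorff and box dimension both equal to $1$, gives $\dim(J(\alpha)\times[n,n+1])=\dim J(\alpha)+1$. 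Taking the supremum over $n\in\Z$ yields the claimed equality.

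I do not anticipate any real obstacle: the only mild subtlety is that $\R$ is unbounded, which prevents a direct application of the product inequality, but this is handled by the countable decomposition above. Everything else is a routine combination of Lemma \ref{igual1} and a standard dimension-theoretic estimate.
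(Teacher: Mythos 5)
Your proof is correct and follows essentially the same route as the paper: Lemma \ref{igual1} gives the containment $K(\alpha)\subseteq J(\alpha)\times\R$, and the product formula for Hausdorff dimension (with the second factor having coinciding Hausdorff and box dimension) gives the equality $\dim(J(\alpha)\times\R)=\dim J(\alpha)+1$. The one place where you are slightly more careful than the paper is in handling the unboundedness of $\R$: the classical product inequality involving upper box dimension is stated for bounded sets, so your decomposition $\R=\bigcup_{n\in\Z}[n,n+1]$ combined with countable stability of Hausdorff dimension is a clean way to make the step fully rigorous, whereas the paper simply asserts that box and Hausdorff dimension of $\R$ coincide and both equal one. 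This is a small technical refinement rather than a different argument.
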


\begin{proof}
First note that  if $(x,r) \in K(\alpha)$ then by virtue of Lemma \ref{igual1} we have that $x \in J(\alpha)$. Also if $(x,r) \in K(\alpha)$ then
 $(x,s) \in K(\alpha)$ for every $s \in [0, \tau(x))$. We therefore have
\begin{equation*}
K(\alpha) \subset \left\{(x,r) \in \R^2 : x \in J(\alpha) \text{ and } r \in [0, \tau(x)]   \right\}.
\end{equation*}
The box dimension of $\R$ and its Hausdorff dimension coincide, both are equal to one. Therefore, the dimension of the Cartesian product is the sum of the dimensions of each of the factors (see \cite[p.94]{fa}). The result now follows.
\end{proof}

In order to prove the lower bound we will use an approximation argument.

\begin{rem}[Compact setting] \label{cs}
Let $\C \subset Y$ is a compact $\Phi-$invariant set and consider the restriction of $g$ to the set $\C$ (which is a H\"older map). Then, it was proven by Barreira and Saussol \cite[Proposition 6]{bs2} that, $(x,r) \in K(\alpha)$ if and only if $x \in J(\alpha)$.
\end{rem}
The following Lemma completes the proof of Theorem \ref{mfa-flow}.
\begin{lema}
Let $\alpha\in \R$ be such that $K(\alpha)\neq \emptyset$, then
\begin{equation*}
\dim K(\alpha) \geq \dim \left(J(\alpha) \times \R  \right)= \dim J(\alpha) + 1.
\end{equation*}\end{lema}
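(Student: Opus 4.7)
The plan is to prove the lower bound by constructing, for each admissible ergodic $T$-invariant measure $\mu$ with $\int\phi\,d\mu/\int\psi\,d\mu=\alpha$ and $\int\tau\,d\mu<\infty$, a $\Phi$-invariant probability measure on $Y$ supported on $K(\alpha)$ whose Hausdorff dimension is $h(\mu)/\lambda(\mu)+1$. Taking the supremum over such $\mu$ and invoking Theorem \ref{main} will then give the desired bound.

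First I would set $\mu_\Phi := (\mu\times m)|_Y/(\mu\times m)(Y)$, which lies in $\M_\Phi$ by Ambrose--Kakutani. Since $\mu$ is ergodic for $T$ with the prescribed ratio, Birkhoff's theorem gives $\mu$-a.e.\ $x\in J(\alpha)$. For such $x$ and any $r\in[0,\tau(x))$, the identity
\[
\int_{0}^{\tau_m(x)} g(\varphi_s(x,r))\,ds=\sum_{i=0}^{m-1}\Delta_g(T^ix)
\]
established inside the proof of Lemma \ref{igual1}, combined with $\tau_m(x)\to\infty$ and Kac's formula, yields
\[
\lim_{t\to\infty}\frac{1}{t}\int_{0}^{t} g(\varphi_s(x,r))\,ds=\frac{\int\Delta_g\,d\mu}{\int\tau\,d\mu}=\alpha,
\]
so $(x,r)\in K(\alpha)$. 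Consequently $\mu_\Phi(K(\alpha))=1$ and hence $\dim K(\alpha)\geq \dim\mu_\Phi$.

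Next I would compute $\dim\mu_\Phi$. By \cite[Theorem 4.4.2]{mubook} (already used in the proof of Theorem \ref{main}) the measure $\mu$ is exact dimensional with $\dim\mu=h(\mu)/\lambda(\mu)$. Away from the identification $(x,\tau(x))\sim(T(x),0)$, small balls in $Y$ are comparable to products of a ball in the base with an interval in the flow direction, and $\mu_\Phi$ is a scalar multiple of $\mu\times m$ there; a standard local-dimension/Fubini computation then gives that the pointwise dimension of $\mu_\Phi$ equals $h(\mu)/\lambda(\mu)+1$ at $\mu_\Phi$-a.e.\ point, so $\dim\mu_\Phi=h(\mu)/\lambda(\mu)+1$. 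Therefore $\dim K(\alpha)\geq h(\mu)/\lambda(\mu)+1$. Taking the supremum over admissible $\mu$ and applying Theorem \ref{main} gives the bound when $\alpha\in U$; for $\alpha\in E$ one replaces $\mu$ by a sequence of ergodic measures whose ratios tend to $\alpha$ and whose dimensions tend to $\dim J(\alpha)$, as in Lemma \ref{seqermeas} and Proposition \ref{wmeasure}, and applies the same construction to each.

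The main obstacle I anticipate is the pointwise dimension computation for $\mu_\Phi$. Although heuristically clear, one must rule out pathologies near points whose orbits repeatedly approach the boundary identification, and also near the accumulation of base intervals at $0$. This is handled using that $\tau$ is bounded below by $\eta>0$, is locally H\"older, and that $\mu_\Phi$-a.e.\ point is bounded away from the identified boundary on a set of arbitrarily large measure; one then carries over the product-measure lower bound for Hausdorff dimension to $\mu_\Phi$. The approximation step for $\alpha\in E$ is routine, relying on the same convex-combination and Bernoulli-approximation techniques already developed in Section 4.
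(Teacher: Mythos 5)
Your construction is genuinely different from the paper's. The paper proves the lower bound by exhausting $Y$ with an increasing sequence of compact $\Phi$-invariant sets $\mathcal{C}_n$, invoking Barreira--Saussol (Remark~\ref{cs}) to identify $K_n(\alpha)$ with a Cartesian product $\{(x,r):x\in J_n(\alpha),\ r\in[0,\tau(x)]\}$ on each compact piece, applying the product formula $\dim(J_n(\alpha)\times\R)=\dim J_n(\alpha)+1$, and then passing to the limit. No invariant measures on the flow are constructed. Your route instead builds $\mu_\Phi=(\mu\times m)|_Y/(\mu\times m)(Y)$ from an ergodic base measure $\mu$, checks $\mu_\Phi(K(\alpha))=1$, and computes $\dim\mu_\Phi=h(\mu)/\lambda(\mu)+1$ via exact dimensionality of $\mu$ plus a Fubini/local-dimension argument; you then optimize over $\mu$ using Theorem~\ref{main}. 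Your approach is more self-contained (it does not delegate the key correspondence to \cite[Proposition~6]{bs2}), but it pushes the technical burden onto the pointwise-dimension computation for $\mu_\Phi$, which you acknowledge, and onto controlling the continuous-time average between consecutive return times (this needs $g$ bounded and $\tau(T^m x)/\tau_m(x)\to 0$). The paper's compact-exhaustion argument avoids both issues at the price of needing $\dim J_n(\alpha)\to\dim J(\alpha)$, which follows from the approximation property of the pressure.

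One genuine soft spot in your proposal: the handling of $\alpha\in E$. If you take a sequence of ergodic measures $\mu_n$ whose ratios $\alpha_n\to\alpha$ but are not exactly $\alpha$, the lifted measures $\mu_{n,\Phi}$ give full mass to $K(\alpha_n)$, not $K(\alpha)$; you obtain $\dim K(\alpha_n)\geq h(\mu_n)/\lambda(\mu_n)+1$, which does not pass to $\dim K(\alpha)$ without some semicontinuity argument. The clean fix in your framework is to lift the (typically non-invariant) measure $\nu$ produced by Proposition~\ref{wmeasure} directly to $\nu\times m$ restricted to $Y$, normalize, and run the same local-dimension computation: that measure does charge $K(\alpha)$ with full mass and has dimension $\dim J(\alpha)+1$. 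As stated, your plan ``apply the same construction to each'' does not achieve this, so you should either invoke Proposition~\ref{wmeasure} in this stronger form or fall back on the compact-exhaustion argument for $\alpha\in E$.
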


\begin{proof}
Let $(\C_n)_n$ be an increasing sequence of a compact $\Phi-$invariant set that exhaust $Y$, denote by $C_n$ the projection of $\C_n$ onto $(0,1]$. We define
\begin{equation*}
K_n(\alpha):=K(\alpha) \cap \C_n \text{ and } J_n(\alpha):=J(\alpha) \cap C_n.\end{equation*}
By Remark \ref{cs} we have that
\begin{equation*}
K_n(\alpha)= \left\{ (x,r) \in \R^2: x \in J_n(\alpha) \text{ and } r \in [0, \tau(x)]  \right\}.
\end{equation*}
Hence, using the formula for the Hausdorff dimension of a Cartesian product (see \cite[p.94]{fa}), we have
\begin{equation*}
\dim K_n(\alpha) = \dim J_n(\alpha) + 1.
\end{equation*}
Therefore
\begin{equation*}
\lim_{n \to \infty } \dim K_n(\alpha) =\lim_{n \to \infty } \left( \dim J_n(\alpha) + 1\right) =
\dim J(\alpha) + 1 \leq \dim K(\alpha).
\end{equation*}
\end{proof}

It is a direct consequence of Theorem \ref{mfa-flow} that in order to describe the behaviour of the function $B(\alpha)$ we only need to understand $b(\alpha)$.
Recall that $\Lambda$ denotes the repeller for $T$. The following is a version of
Theorem \ref{analytic} in the suspension flow setting. It thus, describe the regularity properties of the map $B(\alpha)$.

\begin{teo}\label{analytic-flow}
Let $T$ be an EMR map, $\tau \in \R_{\eta}$ a roof a function and $\Phi$ the associated suspension semi-flow. Let $g: Y \to \R$ be a continuous potential such that $\Delta_g \in \mathcal{R}$. Assume that
\[\lim_{x\to 0}\frac{\tau(x)}{\log |T'(x)|}=\infty,\]
then there exist three pairwise disjoint intervals $J_1,J_2,J_3$ such that
\begin{enumerate}
\item
The domain of $K(\alpha)$ can be written as $J_1\cup J_2\cup J_3$,
\item
$J_1\leq J_2\leq  J_3$
\item
The function $\alpha\to\dim K_{\alpha}$ is analytic on $J_1$ and $J_3$
\item
For $\alpha\in J_2$, $\dim K_{\alpha}=\dim\Lambda + 1$
\item
It is possible that $J_1=\emptyset$, $J_3=\emptyset$ or that $J_2$ is a single point.
\end{enumerate}
\end{teo}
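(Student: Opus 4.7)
The plan is to reduce Theorem \ref{analytic-flow} to Theorem \ref{analytic} through the dimension identity already established in Theorem \ref{mfa-flow}. First I would set $\phi := \Delta_g$ and $\psi := \tau$, regarded as potentials on the base repeller $\Lambda$ of the EMR map $T$. By hypothesis we then have $\phi \in \mathcal{R}$ and $\psi \in \mathcal{R}_\eta$, and the assumption $\lim_{x\to 0} \tau(x)/\log|T'(x)| = \infty$ is exactly the asymptotic condition required on $\psi$ in Theorem \ref{analytic}. Thus all the hypotheses of Theorem \ref{analytic} concerning the base map are in place, modulo the condition $\alpha_M < \infty$ which I address below.

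Second, Lemma \ref{igual1} implies that whenever $(x,r) \in K(\alpha)$ the base point $x$ lies in the quotient-Birkhoff level set
\[
J(\alpha) = \left\{ x \in \Lambda : \lim_{n\to\infty} \frac{\sum_{i=0}^{n-1} \Delta_g(T^i x)}{\sum_{i=0}^{n-1} \tau(T^i x)} = \alpha \right\},
\]
and the domain of $\alpha \mapsto \dim K(\alpha)$ coincides with that of $\alpha \mapsto \dim J(\alpha)$ (the reverse passage is built from pushing a base orbit up to the flow via the natural $[0,\tau(x)]$-fibre). Theorem \ref{mfa-flow} then yields $\dim K(\alpha) = \dim J(\alpha) + 1$ for every admissible $\alpha$. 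Applying Theorem \ref{analytic} to the pair $(\phi,\psi)$ produces three pairwise disjoint intervals $J_1, J_2, J_3$ with $J_1 \leq J_2 \leq J_3$, partitioning the domain, such that $\alpha \mapsto \dim J(\alpha)$ is analytic on $J_1$ and $J_3$ and equals $\dim \Lambda$ on $J_2$. Translating through the identity $\dim K(\alpha) = \dim J(\alpha)+1$, the function $\alpha \mapsto \dim K(\alpha)$ inherits analyticity on $J_1$ and $J_3$ (analyticity is preserved under adding a constant) and takes the constant value $\dim \Lambda + 1$ on $J_2$. The degeneracy clause (emptiness of $J_1$ or $J_3$, or $J_2$ collapsing to a point) transfers verbatim from Theorem \ref{analytic}.

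The principal point of care, and the only place where new work may be needed, is verifying the hypothesis $\alpha_M < \infty$ of Theorem \ref{analytic} in the flow setting. This amounts to ensuring that the flow Birkhoff averages of $g$ are bounded above, equivalently that $\Delta_g/\tau$ is bounded above along orbits of $T$; this is automatic once $K(\alpha)$ has bounded domain, and can be secured under mild additional growth control of $g$ relative to $\tau$ near the cusp, using that $\Delta_g \in \mathcal{R}$ combined with the Kac-type formula \eqref{rela}. Beyond this purely bookkeeping issue, no further analytic input is required: the multifractal machinery has already been done at the level of the base map.
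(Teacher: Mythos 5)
Your reduction of Theorem \ref{analytic-flow} to Theorems \ref{mfa-flow} and \ref{analytic} via the substitution $\phi = \Delta_g$, $\psi = \tau$ is exactly the route the paper takes: the paper presents no separate argument here, stating the result as an immediate consequence of those two theorems. You are also right to flag the hypothesis $\alpha_M < \infty$. Theorem \ref{analytic} requires it, but the statement of Theorem \ref{analytic-flow} omits it, and this is a genuine (if small) imprecision in the paper itself: membership in $\mathcal{R}$ only forces $\Delta_g$ to be bounded below and locally H\"older, so nothing prevents $\Delta_g/\tau$ from being unbounded above when $g$ is merely continuous. The hypothesis must either be added explicitly or deduced from an extra assumption on $g$; the cleanest such assumption is $\sup_Y |g| < \infty$, which gives $|\Delta_g(x)| \leq \|g\|_\infty \, \tau(x)$ and hence $\alpha_M \leq \|g\|_\infty$. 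Your own sketch (``mild additional growth control of $g$ relative to $\tau$ near the cusp'' via the Kac-type formula) is gesturing at the same fix but leaves the needed condition unstated, so it should be made precise rather than treated as bookkeeping. Apart from this point, which the paper shares, your argument is correct and complete.
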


\section{Continued fractions}
In this section we consider examples involving the Gauss map and, hence, the continued fraction expansion of a number. Every irrational number $x \in [0,1]$ can be written in a unique way as a continued fraction,
\begin{equation*}
x = \textrm{ } \cfrac{1}{a_1 + \cfrac{1}{a_2 + \cfrac{1}{a_3 + \dots}}} = \textrm{ } [a_1 a_2 a_3 \dots],
\end{equation*}
where $a_i \in \mathbb{N}$. It well known that the Gauss map, $G:(0,1] \to (0,1]$, acts as the shift in the continued fraction expansion (see \cite[Chapter 3]{ew}), that is
\begin{equation*}
G([a_1 a_2  a_3 \dots ])= [a_2 a_3  a_4 \dots ].
\end{equation*}
\subsection{Arithmetic and Geometric averages}
Let $\psi:[0,1] \to \R$ be defined by $\psi([a_1 a_2  a_3 \dots ]):=a_1$. Note that the Birkhoff average of $G$ with respect to the potential $\psi$ is nothing but the arithmetic average of the digits in the continued fraction expansion
\[\lim_{n \to \infty} \frac{1}{n} \sum_{n=1}^{\infty} \psi(G^n(x)) =
\lim_{n \to \infty} \frac{a_1 + a_2 + \dots + a_n}{n} .\]
Note that \cite[p.83]{ew} for Lebesgue almost every point the arithmetic average is infinite.
The level sets induced by the arithmetic averages where studied in \cite{ij}. Consider now the function $\phi:[0,1] \to \R$ defined  by $\phi(x)= \log a_1$. The Birkhoff average of $G$ with respect to that function is the logarithm of the geometric average,
\[\lim_{n \to \infty} \frac{1}{n} \sum_{n=1}^{\infty} \phi(G^n(x)) =
\lim_{n \to \infty}  \log \sqrt[n]{a_1 a_2 \cdots a_n}.\]
For Lebesgue almost every point this sum takes the value \cite[p.83]{ew}
\[ \log \left( \prod_{n=1}^{\infty} \left(\frac{(n+1)^2}{n(n+2)}  \right)^{\log n / \log 2} \right).\]
The level sets determined by the geometric average were studied in \cite{flww,ks,pw}.
On the other hand, the Birkhoff average corresponding to $\log|G'(x)|$ is the \emph{Lyapunov exponent} of the point $x$, that is
\[\lambda(x):=\lim_{n  \to \infty} \frac{1}{n} \sum_{k=0}^{n-1} \log|G'(G^k(x))|\]
if this limit exists. This number  measures the exponential speed of approximation of an irrational number by its approximants, which are defined by,
\[\frac{p_n}{q_n}:=[a_1 \dots a_n].\]
That is (see \cite{pw})
\begin{equation*}
\left| x- \frac{p_n}{q_n} \right| \asymp \exp(-n\lambda(x)).
\end{equation*}
For Lebesgue almost every point this number equal to \cite[p.83]{ew},
\[  \frac{\pi^2}{6 \log 2}. \]
The multifractal analysis for this function has been studied in \cite{pw} and \cite{ks}.
The techniques developed in this paper allow us to study the following related level sets of the form
\begin{equation*}
J(\alpha):=\left\{ x \in [0,1] : \lim_{n\to \infty} \frac{\log (a_1 a_2 \cdots a_n)}{a_1 + a_2 + \dots + a_n} = \alpha \right\}.
\end{equation*}
Note that the quotient defining the level set is the quotient of the logarithm of the geometric average with  the arithmetic average. Indeed,
\[ \frac{\log (a_1 a_2 \cdots a_n)}{a_1 + a_2 + \dots + a_n} =\frac{\frac{1}{n}}{\frac{1}{n}} \frac{\log (a_1 a_2 \cdots a_n)}{a_1 + a_2 + \dots + a_n}=
\frac{ \log \sqrt[n]{a_1 a_2 \cdots a_n}}{\frac{a_1 + a_2 + \dots + a_n}{n}}. \]

\begin{lema}
We have that $\alpha_m=0$ and  $\alpha_M= \frac{\log 3}{3}$. Moreover, the set $J(0)$ has full Lebesgue measure.
\end{lema}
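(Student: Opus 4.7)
The plan is to handle the three claims separately. For $\alpha_m = 0$, I would use that $\phi(x) = \log a_1 \geq 0$ and $\psi(x) = a_1 \geq 1$, so $S_n\phi(x)/S_n\psi(x) \geq 0$ for every $x$ and every $n$, giving $\alpha_m \geq 0$. Equality is achieved by the golden-ratio point $x^* = [1,1,1,\ldots]$, where $\log a_i = 0$ for all $i$ and hence the limit is exactly $0$.

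For $\alpha_M = \tfrac{\log 3}{3}$, the key observation is that the Birkhoff ratio is a weighted average: writing $w_i = a_i$ we have
\[
\frac{\sum_{i=1}^n \log a_i}{\sum_{i=1}^n a_i} = \frac{\sum_{i=1}^n w_i \,\bigl(\tfrac{\log a_i}{a_i}\bigr)}{\sum_{i=1}^n w_i} \;\leq\; \sup_{i}\frac{\log a_i}{a_i} \;\leq\; \max_{a\in\N}\frac{\log a}{a}.
\]
A routine check of the one-variable function $f(a)=\log a /a$ (whose unique real maximum is at $a=e$) shows that the maximum over $\N$ is attained at $a=3$, with value $\tfrac{\log 3}{3}$, so $\alpha_M \leq \tfrac{\log 3}{3}$. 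The reverse inequality follows by taking $x^{**}=[3,3,3,\ldots]$, for which the ratio equals $\tfrac{\log 3}{3}$ at every $n$.

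For the full-measure claim about $J(0)$, I would invoke the Birkhoff ergodic theorem with respect to the Gauss measure $\mu_G$, which is ergodic for $G$ and equivalent to Lebesgue measure (so almost-sure statements transfer). The numerator potential $\phi(x)=\log a_1$ lies in $L^1(\mu_G)$: since the density of $\mu_G$ is bounded, this reduces to $\int_0^1 \log\lfloor 1/x\rfloor\, dx < \infty$, which is elementary. Hence $\tfrac{1}{n}\sum_{i=0}^{n-1}\phi(G^ix)$ converges $\mu_G$-a.e.\ to a finite constant. The denominator potential $\psi(x)=a_1$ is non-negative but not in $L^1(\mu_G)$ (as noted in the paragraph preceding the lemma, the arithmetic average of the digits is infinite almost surely); the Birkhoff ergodic theorem for non-integrable non-negative functions then gives $\tfrac{1}{n}\sum_{i=0}^{n-1}\psi(G^ix) \to \infty$ almost everywhere. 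Dividing, the ratio converges to $0$ on a set of full Lebesgue measure, which is exactly the statement that $J(0)$ has full Lebesgue measure.

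There is no real obstacle in this argument; the only subtlety worth care is the invocation of Birkhoff's theorem in the non-integrable regime for the denominator, which is standard but should be cited explicitly, together with the equivalence of $\mu_G$ and Lebesgue so the conclusion transfers to the Lebesgue setting.
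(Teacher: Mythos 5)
Your proof is correct and, on the key point, takes a different route from the paper. For $\alpha_m=0$ and the full-measure statement about $J(0)$ you essentially mirror the paper's argument (the paper simply cites that Lebesgue-a.e.\ the arithmetic average of the digits diverges while the geometric one stays finite; you make this precise by invoking Birkhoff's theorem for the Gauss measure, including the non-integrable case for the denominator, which is a useful elaboration). For $\alpha_M$, however, your argument genuinely differs: you observe that the ratio $\frac{\sum \log a_i}{\sum a_i}$ is a weighted average of $\frac{\log a_i}{a_i}$ with weights $a_i$, hence bounded above by $\max_{a\in\N}\frac{\log a}{a}=\frac{\log 3}{3}$, equivalently by the pointwise inequality $\log a \leq \frac{\log 3}{3}\,a$ for all $a\in\N$. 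The paper instead invokes the AM--GM inequality and asserts that the maximizer must be of the form $[a,a,a,\dots]$. As written, the paper's reduction is somewhat informal: AM--GM only yields $\frac{\log(\prod a_i)}{\sum a_i}\leq \frac{\log A}{A}$ where $A$ is the arithmetic mean, and since $A$ need not be an integer this only gives the weaker bound $1/e$ without additional reasoning about integer constraints. Your weighted-average inequality is more direct, gives the sharp bound immediately, and avoids this subtlety altogether; in this respect your argument is arguably cleaner than the paper's.
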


\begin{proof}
We have that $\alpha_m=0$. This is clear, just consider the number $x=[1 1 1 1 1 \dots ]$. It is easy to construct numbers belonging to $J(0)$ as the following example shows,  let $x=[1,2,2^2,2^3, \dots, 2^n \dots ]$. That is the number for which the digits in the continued fraction expansion are in geometric progression with ratio equal to $2$. Then
\[a_1 + a_2 + \dots + a_n= 1+2+2^2+2^3+ \dots+ 2^{n-1}= 2^n-1.\]
On the other hand
\[  \log (a_1 a_2 \cdots a_n) = \log(12^22^3 \cdots 2^{n-1})= \log 2^{1+2+3+\dots +(n-1)}= \frac{(n-1)n}{2} \log 2.    \]
Therefore
\[ \lim_{n \to \infty}  \frac{\log (a_1 a_2 \cdots a_n)}{a_1 + a_2 + \dots + a_n} =\lim_{n \to \infty} \frac{\frac{(n-1)n}{2} \log 2}{2^n-1}=0.\]
The fact that the set $J(0)$ has full Lebesgue measure is a direct consequence of the fact that for Lebesgue almost every point the arithmetic average is infinite and the geometric one is finite.

On the other hand we have that $\alpha_M= (\log 3)/3$. Indeed, it is well known that the arithmetic average is larger than the geometric one
\[ \frac{a_1 + a_2 + \dots + a_n}{n} \geq  \sqrt[n]{a_1 a_2 \cdots a_n}, \]
with equality if and only if $a_1=a_2=a_3= \dots =a_n$. Therefore the maximum of the quotient
\[ \frac{\log (a_1 a_2 \cdots a_n)}{a_1 + a_2 + \dots + a_n} \]
is achieved in an algebraic number of the form $x=[a,a,a, \dots ]$. In this case we obtain
\[ \frac{\log (a_1 a_2 \cdots a_n)}{a_1 + a_2 + \dots + a_n}=  \frac{\log a}{a}. \]
The maximum of the function $f(x)= (\log x)/x$ is attained at $x=e$. Since $a \in \N$ we have that the maximum is $(\log 3) /3$.
\end{proof}

\begin{lema}
We have that $\overline{\alpha}=\underline{\alpha}= 0$. In particular $U=[\alpha_m, \alpha_M]  \setminus (\underline{\alpha},\overline{\alpha}) =(0, (\log 3)/3)$
\end{lema}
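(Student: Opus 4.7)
The plan is to compute the only possible limit of $\phi(x_n)/\psi(x_n)$ as $x_n \to 0$ and then plug into the definition of $U$.

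First I would exploit the partition structure of the Gauss map: for every $k \in \N$, $a_1(x) = k$ precisely when $x \in (1/(k+1), 1/k]$. Hence if $\{x_n\}_{n \in \N}$ is any sequence with $x_n \to 0$, then the sequence of first digits $a_n := a_1(x_n)$ must tend to infinity. Since $\phi(x_n) = \log a_n$ and $\psi(x_n) = a_n$, we obtain
\[
\frac{\phi(x_n)}{\psi(x_n)} \;=\; \frac{\log a_n}{a_n} \;\longrightarrow\; 0
\]
as $n \to \infty$, because $(\log k)/k \to 0$ as $k \to \infty$. Consequently the set over which the infimum defining $\underline{\alpha}$ and the supremum defining $\overline{\alpha}$ are taken is the singleton $\{0\}$, giving $\underline{\alpha} = \overline{\alpha} = 0$.

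For the second claim, recall from the definitions in the body of the paper that $E = [\underline{\alpha},\overline{\alpha}]$ and $U = (\alpha_m,\alpha_M) \setminus E$. Combining this with $\underline{\alpha} = \overline{\alpha} = 0$, with the previous lemma ($\alpha_m = 0$ and $\alpha_M = (\log 3)/3$), we conclude
\[
U \;=\; (0,\tfrac{\log 3}{3}) \setminus \{0\} \;=\; (0,\tfrac{\log 3}{3}),
\]
which is exactly the stated description. No obstacle is expected here; the only substantive input is the monotonicity of the Gauss map's partition near zero, which forces $a_1(x_n) \to \infty$ whenever $x_n \to 0$.
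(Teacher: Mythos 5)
Your proof is correct and matches the paper's argument, which simply notes that $\lim_{x\to 0}\phi(x)/\psi(x)=\lim_{n\to\infty}(\log n)/n=0$; you have filled in the (easy) missing step that $x_n\to 0$ forces the first continued-fraction digit $a_1(x_n)\to\infty$, which is what makes the reduction to $(\log n)/n$ legitimate. You also correctly used the body's definition $U=(\alpha_m,\alpha_M)\setminus E$ with $E=[\underline{\alpha},\overline{\alpha}]$ rather than the (slightly garbled) expression in the lemma statement itself.
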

\begin{proof}
Note that
\begin{equation*}
\lim_{x \to 0} \frac{\phi(x)}{\psi(x)}= \lim_{n \to \infty} \frac{\log n}{n}=0.
\end{equation*}
That is $\overline{\alpha}=\underline{\alpha}= 0$. Therefore $U=(\alpha_m, \alpha_M)=(0, (\log 3)/3)$.
\end{proof}
Therefore, a direct consequence of  Theorem \ref{main} is that

\begin{prop}
For every $\alpha \in (0, (\log 3)/3)$ we have that
\begin{equation*}
b(\alpha)=\dim J(\alpha)= \sup_{\mu\in \tilde{\mathcal{M}}(G)}\left\{\frac{h(\mu)}{\lambda(\mu)}: \frac{\int\phi\text{d}\mu}{\int\psi\text{d}\mu}=\alpha\right\}.
\end{equation*}
\end{prop}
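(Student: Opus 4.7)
The plan is to observe that this proposition is essentially a direct corollary of Theorem \ref{main} applied to the Gauss map $G$ with the specific potentials $\phi(x) = \log a_1$ and $\psi(x) = a_1$. The preceding two lemmas have already identified the relevant spectral endpoints and the region $U$, so the work reduces to verifying the hypotheses of Theorem \ref{main}.

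First, I would check that $G$ is an EMR map, which is precisely the content of Example \ref{ga}. Next, I would verify that $\phi \in \mathcal{R}$ and $\psi \in \mathcal{R}_{\eta}$ for some $\eta > 0$. Since both $\phi$ and $\psi$ are constant on each cylinder $I(a_1)$ of the Gauss partition (they only depend on the first digit), they satisfy $\mathrm{var}_n(\phi \circ \pi) = \mathrm{var}_n(\psi \circ \pi) = 0$ for all $n \geq 1$, hence are trivially locally Hölder. Moreover $\psi(x) = a_1 \geq 1$ and $\phi(x) = \log a_1 \geq 0$, so $\phi$ is bounded below and $\psi \in \mathcal{R}_{1}$.

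Second, by the second lemma above we have $\overline{\alpha} = \underline{\alpha} = 0$, so $E = \{0\}$ and
\[ U = (\alpha_m, \alpha_M) \setminus E = (0, (\log 3)/3). \]
Therefore, for every $\alpha \in (0, (\log 3)/3)$ we are in the regime $\alpha \in U$ covered by the first conclusion of Theorem \ref{main}, which gives
\[
\dim J(\alpha) = \sup_{\mu \in \tilde{\mathcal{M}}(G)}\left\{\frac{h(\mu)}{\lambda(\mu)} : \frac{\int \phi\,\text{d}\mu}{\int \psi\,\text{d}\mu} = \alpha\right\}.
\]
There is no substantive obstacle here: the proposition is a straightforward specialisation, and the only point that merits a sentence of justification is that $\phi, \psi$ are locally Hölder (immediate since they are constant on $1$-cylinders) and lie in the required classes.
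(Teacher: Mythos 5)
Your proposal is correct and follows exactly the route the paper takes: the paper simply states that the proposition is a direct consequence of Theorem \ref{main}, with the two preceding lemmas having established $\alpha_m = 0$, $\alpha_M = (\log 3)/3$, and $\underline{\alpha} = \overline{\alpha} = 0$ so that $U = (0, (\log 3)/3)$. Your additional verification that $\phi = \log a_1$ and $\psi = a_1$ lie in $\mathcal{R}$ and $\mathcal{R}_1$ respectively (both are constant on $1$-cylinders, hence have vanishing variations, and are bounded below by $0$ and $1$) is a detail the paper leaves implicit, and it is correct.
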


\begin{lema}
We have that
\begin{equation*}
\lim_{x \to 0} \frac{\psi(x)}{\log|T'(x)|} = \infty.
\end{equation*}
\end{lema}
\begin{proof}
Note that if $ x \in (1/(n+1), 1/n)$ then $\psi(x)=n$ and $2\log n \geq \log|T'(x)|$. Thus,
\begin{equation*}
\lim_{x \to 0} \frac{\psi(x)}{\log|T'(x)|} \geq \lim_{n \to \infty} \frac{n}{2\log n}= \infty.
\end{equation*}
\end{proof}
In particular we have proved that the assumptions of Theorem \ref{analytic} are satisfied.

\subsection{Weighted arithmetic averages.}
Let $\rho,\gamma >0$ consider the level sets defined as the quotient of weighted arithmetic averages,
\begin{equation*}
J(\alpha):=\left\{ x \in [0,1] : \lim_{n\to \infty} \frac{a_1^{\gamma} + a_2^{\gamma} + \dots + a_n^{\gamma}}{a_1^{\rho} + a_2^{\rho} + \dots + a_n^{\rho}} = \alpha \right\}.
\end{equation*}
Let $\psi:[0,1] \to \R$ be defined by $\psi([a_1 a_2  a_3 \dots ]):=a_1^{\rho}$. Note that the Birkhoff average of $G$ with respect to the potential $\psi$ is nothing but the weighted arithmetic average of the digits in the continued fraction expansion
\[\lim_{n \to \infty} \frac{1}{n} \sum_{n=1}^{\infty} \psi(G^n(x)) =
\lim_{n \to \infty} \frac{a_1^{\rho} + a_2^{\rho} + \dots + a_n^{\rho}}{n} .\]
In an analogous way we define  $\phi:[0,1] \to \R$  by $\psi([a_1 a_2  a_3 \dots ]):=a_1^{\gamma}$.  The level sets determined by Birkhoff averages of  the potential $\psi$ where studied in \cite[Proposition 6.3]{ij}. In that context there exists essentially two different types of behaviour, depending if $\rho \in (0,1)$ or if $\rho \geq 1$.

\begin{rem} \label{al}
If $\gamma > \rho$ then
\[\overline{\alpha}=\underline{\alpha}=\lim_{x \to 0} \frac{\phi(x)}{\psi(x)} =\lim_{n \to \infty} n^{\rho-\gamma}=0.\]
Since the level sets are defined by the quotient of positive numbers we have that $\alpha_m=0$. Remark that if $x=[a,a,a, \dots]$ then
\begin{equation*}
 \lim_{n\to \infty} \frac{a^{\gamma} + a^{\gamma} + \dots + a^{\gamma}}{a^{\rho} + a^{\rho} + \dots + a^{\rho}} =a^{\rho- \gamma}.
\end{equation*}
In particular if $x=[1,1,1, \dots]$ we have that $\alpha=1 \in [\alpha_m, \alpha_m]$. On the other hand note that if $a \in \N$ then $a^{\rho} \leq a^{\gamma}$, therefore
\begin{equation*}
 \lim_{n\to \infty} \frac{a^{\gamma} + a^{\gamma} + \dots + a^{\gamma}}{a^{\rho} + a^{\rho} + \dots + a^{\rho}} \leq 1.
\end{equation*}
Thus, $\alpha_M=1$.
\end{rem}

\begin{prop}
If $\gamma > \rho$ then for every $\alpha \in (0, 1)$ we have that
$$b(\alpha):=\dim J(\alpha)=\sup_{\mu\in \tilde{\mathcal{M}}(T)}\left\{\frac{h(\mu)}{\lambda(\mu)}: \frac{\int\phi\text{d}\mu}{\int\psi\text{d}\mu}=\alpha\right\}$$
\end{prop}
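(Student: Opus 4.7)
The proof is a direct application of Theorem \ref{main}; my plan is to verify its two hypotheses (that $\phi\in\mathcal{R}$ and $\psi\in\mathcal{R}_\eta$, and that $\alpha\in U$) for the weighted-digit potentials at hand, and then simply quote the theorem.

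First I would check that $\phi(x)=a_1^{\gamma}$ and $\psi(x)=a_1^{\rho}$ lie in the appropriate classes of potentials. Since both depend only on the first digit of the continued fraction expansion, their symbolic lifts $\phi\circ\pi$ and $\psi\circ\pi$ are constant on every one-cylinder $C_{i_1}$; hence $\text{var}_n(\phi\circ\pi)=\text{var}_n(\psi\circ\pi)=0$ for every $n\geq 1$, so they are trivially locally H\"older. Because $a_1\in\mathbb{N}$, both potentials are bounded below by $1$, which shows $\phi\in\mathcal{R}$ and $\psi\in\mathcal{R}_1\subset\mathcal{R}_\eta$.

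Next I would identify the set $U$ determined by this pair $(\phi,\psi)$. Remark \ref{al} establishes $\underline{\alpha}=\overline{\alpha}=0$, so $E=[\underline{\alpha},\overline{\alpha}]=\{0\}$, while the same remark provides $\alpha_m=0$ and $\alpha_M=1$. Therefore
\[
U = (\alpha_m,\alpha_M)\setminus E = (0,1)\setminus\{0\} = (0,1),
\]
and every $\alpha\in(0,1)$ lies in $U$.

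With both hypotheses verified, Theorem \ref{main} applies and yields, for each $\alpha\in(0,1)$,
\[
b(\alpha)=\dim J(\alpha)=\sup_{\mu\in\tilde{\mathcal{M}}(T)}\left\{\frac{h(\mu)}{\lambda(\mu)}: \frac{\int\phi\,\text{d}\mu}{\int\psi\,\text{d}\mu}=\alpha\right\},
\]
which is exactly the claimed formula. There is no real obstacle: the substance of the proof is absorbed entirely by Theorem \ref{main}, and the only care needed is to confirm that the weighted-digit potentials are locally H\"older, which is immediate because they depend only on the first symbol.
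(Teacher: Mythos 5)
Your proposal is correct and follows essentially the same route as the paper: it invokes the facts $\alpha_m=0$, $\alpha_M=1$, $E=\{0\}$ from Remark \ref{al} to conclude $U=(0,1)$, and then applies Theorem \ref{main}. The only addition is the explicit verification that $\phi,\psi$ lie in $\mathcal{R}$ and $\mathcal{R}_1$, which the paper takes as given but which you check correctly (both potentials depend only on the first digit, so all variations vanish, and both are bounded below by $1$).
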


\begin{proof}
In Remark \ref{al} we proved  that $\alpha_m=0$ and $E=\{0\}$. The result now follows by applying Theorem \ref{main}.
\end{proof}

\begin{lema}
We have that
\begin{equation*}
\lim_{x \to 0} \frac{\psi(x)}{\log|T'(x)|} = \infty.
\end{equation*}
\end{lema}
\begin{proof}
Note that if $ x \in [1/(n+1), 1/n]$ then $\psi(x)=n^{\gamma}$ and $2\log n \geq \log|T'(x)|$. Thus,
\begin{equation*}
\lim_{x \to 0} \frac{\psi(x)}{\log|T'(x)|} \geq \lim_{n \to \infty} \frac{n^{\gamma}}{2\log n}= \infty.
\end{equation*}
\end{proof}
In particular we have proved that the assumptions of Theorem \ref{analytic} are satisfied.

\end{document}